\NeedsTeXFormat{LaTeX2e}
[1994/12/01]

\documentclass[11pt]{amsart}
\usepackage{amsmath,amsthm,amsfonts,amscd,amssymb,eucal,latexsym,mathrsfs,bbm}
\usepackage{amsmath,amsthm,amsfonts,amscd,amssymb,comment,eucal,latexsym,mathrsfs}
\usepackage[all]{xy}
\setlength{\textwidth}{15cm}
\setlength{\oddsidemargin}{2mm}
\setlength{\evensidemargin}{2mm}

\newtheorem{thm}{Theorem}[section]
\newtheorem{cor}[thm]{Corollary}
\newtheorem{lem}[thm]{Lemma}
\newtheorem{prop}[thm]{Proposition}

\theoremstyle{definition}
\newtheorem{defn}[thm]{Definition}

\DeclareMathOperator{\diag}{diag}
\DeclareMathOperator{\Aut}{Aut}
\DeclareMathOperator{\OAut}{OAut}

\DeclareMathOperator{\Orb}{Orb}
\DeclareMathOperator{\Tr}{Tr}

\DeclareMathOperator{\Perm}{Perm}

\DeclareMathOperator{\alg}{alg}

\DeclareMathOperator{\Inn}{Inn}
\DeclareMathOperator{\supp}{supp}
\DeclareMathOperator{\RR}{RR}
\DeclareMathOperator{\sr}{sr}
\DeclareMathOperator{\sa}{sa}
\DeclareMathOperator{\GL}{GL}

\newcommand{\cstar}{C\ensuremath{^{*}}}

\title{Finiteness and Paradoxical Decompostions in \cstar-Dynamical Systems}
\author{Timothy Rainone}

\begin{document}

\maketitle

\begin{abstract}
We discuss the interplay between $K$-theoretical dynamics and the structure theory for certain \cstar-algebras arising from crossed products. For noncommutative \cstar-systems we present notions of minimality and topological transitivity in the $K$-theoretic framework which are used to prove structural results for reduced crossed products. In the presence of sufficiently many projections we associate to each noncommutative \cstar-system $(A,G,\alpha)$ a type semigroup $S(A,G,\alpha)$ which reflects much of the spirit of the underlying action. We characterize purely infinite as well as stably finite crossed products by means of the infinite or rather finite nature of this semigroup. We explore the dichotomy between stable finiteness and pure infiniteness in certain classes of reduced crossed products by means of paradoxical decompositions. 
\end{abstract}

\section{Introduction}
Dynamical systems and the theory of operator algebras are inextricably related~\cite{Bl2},~\cite{Kerr},~\cite{Ph}. Topological dynamics has long played a significant role in the study and classification of amenable \cstar-algebras by providing a wealth of examples that fall under the umbrella of Elliott's classification program as well as examples that lack certain regularity properties~\cite{TomsWin1}, ~\cite{TomsWin2},~\cite{GK},~\cite{EllNiu}. The crossed product construction permits the exploitation of symmetry through the acting group and is generous enough to produce a variety of \cstar-algebraic phenomena. One would like to uncover information about the the crossed product algebra by unpacking the dynamics and, conversely, describe the nature of the system by looking at the operator algebra's structure and invariants. 

Of particular interest in this paper is the deep theme common to groups, dynamical systems and operator algebras; that of finiteness, infiniteness, and proper infiniteness, the latter expressed in terms of paradoxical decompositions.  The remarkable alternative theorem of Tarski establishes, for discrete groups, the dichotomy between amenability and paradoxical decomposability. This carries over into the realm of operator algebras. Indeed, if a discrete group $\Gamma$ acts on itself by left-translation, the Roe algebra $C(\beta\Gamma)\rtimes_{\lambda}\Gamma$ is properly infinite if and only if $\Gamma$ is $\Gamma$-paradoxical and this happens if and only if $\Gamma$ is non-amenable~\cite{SR}. This is mirrored in the von Neumann algebra setting as well; all projections in a II$_1$ factor are finite and the ordering of Murray-von-Neumann subequivalence is determined by a unique faithful normal tracial state. Alternatively type III factors admit no traces since all non-zero projections therein are properly infinite. As for unital, simple, separable and nuclear algebras, the \cstar-enthusiast of old hoped  that the trace/traceless divide determined a similar dichotomy between stable finiteness and pure infiniteness (the \cstar-algebraic analog of type III). This hope was laid to rest with R{\o}rdam's example of a unital, simple, separable, nuclear \cstar-algebra containing both an infinite and a non-zero finite projection~\cite{R1}. The conjecture for such a dichotomy remains open for those algebras whose projections are total. Theorem~\ref{IntroDich} below is a result in this direction.

Despite the failure of the above dichotomy, the classification program of Elliott in its original $K$-theoretic formulation has witnessed much success for stably finite algebras ~\cite{Ro},~\cite{EllToms}, as well as in the purely infinite case with the spectacular complete classification results of Kirchberg and Phillips ~\cite{Ph2},~\cite{Kir} modulo the UCT. One motivation for studying purely infinite algebras stems from the fact that Kirchberg algebras (unital, simple, separable, nuclear, and purely infinite) are classified by their $K$- or $KK$-theory. The \cstar-literature has produced examples of purely infinite \cstar-algebras arising from dynamical systems~\cite{AD2},~\cite{KiKu},~\cite{LS},~\cite{SR}. In many cases the underlying algebra is abelian with spectrum the Cantor set. For example, Archbold, Spielberg, and Kumjian (independently) proved that there is an action of $\mathbb{Z}_{2}\ast\mathbb{Z}_{3}$ on the Cantor set so that the corresponding crossed product \cstar-algebra is isomorphic to $\mathcal{O}_2$~\cite{Sp2}. Laca and Spielberg~\cite{LS} construct purely infinite and simple crossed products that emerge from strong boundary actions. Jolissaint and Robertson~\cite{JR} generalized the idea of strong boundary action to noncommutative systems with the concept of an $n$-filling action. They showed that $A\rtimes_{\lambda}\Gamma$ is simple and purely infinite provided that the action is properly outer and $n$-filling and every corner $pAp$ of $A$ is infinite dimensional. When the algebra $A$ has a well behaved $K_{0}(A)$ group we will in fact give a $K$-theoretic proof of their result (see Proposition~\ref{JRthm}).

The transition from classical topological dynamics to noncommutative \cstar-dynamics presents several challenges and subtleties. One way to approach these issues is to interpret dynamical conditions $K$-theoretically via the induced actions on $K_{0}(A)$ and on the Cuntz semigroup $W(A)$ and use tools from the classification literature  as well as developed techniques of Cuntz comparison to uncover pertinent algebraic information. Such an approach is seen in Brown's work~\cite{B} as well as that of the author in~\cite{Ra}. We continue this philosophy here. For instance, the classical version of topological transitivity has a natural extension to noncommutative systems (Definition~\ref{TopTransDefn}), and, as in the commutative case, is tied to the primitivity of the algebra (see Theorem~\ref{ToptransPrime}). The idea of a group acting paradoxically on a set and the construction of the type semigroup goes back to the work of Tarski (the reader is encouraged to read Wagon's book~\cite{Wa} for a good treatment). R{\o}rdam and Sierakowski~\cite{SR} looked at the type semigroup $S(X,\Gamma)$ built from an action of a discrete group on the Cantor set and tied pure infiniteness of the resulting reduced crossed product to the absence of traces on this semigroup. In effect, they prove that if a countable, discrete, and exact group $\Gamma$ acts continuously and freely on the Cantor set $X$, and the preordered semigroup $S(X,\Gamma)$ is almost unperforated,  then the following are equivalent: (i) The reduced crossed product  $C(X)\rtimes_{\lambda}\Gamma$ is purely infinite, (ii) $C(X)\rtimes_{\lambda}\Gamma$  is traceless, (iii) $S(X,\Gamma)$ is purely infinite (that is $2x\leq x$ for every $x\in S(X,\Gamma)$), and (iv) $S(X,\Gamma)$ is traceless. Inspired by their work, we construct a type semigroup  $S(A,\Gamma,\alpha)$ for noncommutative systems $(A,\Gamma,\alpha)$  and establish a generalized result. This is Theorem~\ref{purelyinfinitecross} below which, in particular, implies the following.

\begin{thm}Let $A$ be a unital, separable, and exact C*-algebra with stable rank one and real rank zero. Let $\alpha:\Gamma\rightarrow\Aut(A)$ be a minimal and properly outer action with $S(A,\Gamma,\alpha)$ almost unperforated. Then the following are equivalent:
\begin{enumerate}
\item The semigroup $S(A,\Gamma,\alpha)$ is purely infinite.
\item The C*-algebra $A\rtimes_{\lambda}\Gamma$ is purely infinite.
\item The C*-algebra $A\rtimes_{\lambda}\Gamma$ is traceless.
\item The semigroup $S(A,\Gamma,\alpha)$ admits no non-trivial state.
\end{enumerate}
\end{thm}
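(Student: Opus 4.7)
The plan is to close the cycle $(1) \Rightarrow (2) \Rightarrow (3) \Rightarrow (4) \Rightarrow (1)$, adapting the strategy of R\o{}rdam--Sierakowski from the Cantor set setting to the present noncommutative framework, with real rank zero and stable rank one compensating for the loss of zero-dimensionality.

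For $(1) \Rightarrow (2)$, the key is to translate the algebraic paradoxicality $2[p] \leq [p]$ in $S(A,\Gamma,\alpha)$ into a Murray--von Neumann relation $p \oplus p \precsim p$ inside $A \rtimes_\lambda \Gamma$: each $\alpha$-equivalence $\alpha_g(e) = f$ appearing in a paradoxical decomposition at the level of $S$ is implemented in the crossed product by the partial isometry $e u_g$, so a decomposition in $S$ reassembles to a proper infiniteness witness for $p$. Real rank zero of $A$ guarantees, after a standard approximation through the canonical conditional expectation $E \colon A \rtimes_\lambda \Gamma \to A$, that every nonzero hereditary subalgebra of the crossed product can be compressed to contain a nonzero projection of $A$, and minimality promotes such a projection to a full one; combining these one concludes that every nonzero hereditary subalgebra of $A \rtimes_\lambda \Gamma$ contains an infinite projection.

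The implication $(2) \Rightarrow (3)$ is the standard fact that a unital purely infinite \cstar-algebra admits no tracial state. For $(3) \Rightarrow (4)$ I would proceed by contrapositive: a non-trivial state $f$ on $S(A,\Gamma,\alpha)$ descends to an $\alpha$-invariant $[0,\infty]$-valued dimension function on projections of $A$; real rank zero and stable rank one allow one to recover from this datum an $\alpha$-invariant quasi-tracial state on $A$, via the standard correspondence between lower semi-continuous dimension functions on $V(A)$ and quasi-traces. Haagerup's theorem, available since $A$ is exact, upgrades the quasi-trace to a genuine trace $\tau$, and then $\tau \circ E$ is a tracial state on $A \rtimes_\lambda \Gamma$, contradicting (3).

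The heart of the argument is $(4) \Rightarrow (1)$, an abstract Tarski-type theorem for the preordered abelian semigroup $S(A,\Gamma,\alpha)$: should $2x \not\leq x$ for some $x$, a Hahn--Banach-style separation argument should produce a non-trivial $[0,\infty]$-additive state $f$ with $f(x) \in (0,\infty)$ and $f(2x) > f(x)$. Almost unperforation is precisely the hypothesis that converts the numerical strict separation back into the ordered failure $2x \not\leq x$, thereby closing the cycle. I expect this last step to be the main technical obstacle, because one must construct the separating state while carefully handling $\infty$ as a genuine element of $S(A,\Gamma,\alpha)$ (restricting to a suitable cone of bounded elements and extending), and verifying that the classical Tarski dichotomy passes cleanly to the noncommutative semigroup requires precise control over how the $\alpha$-equivalence relation interacts with Murray--von Neumann comparison in $A$, an interaction furnished exactly by stable rank one and real rank zero.
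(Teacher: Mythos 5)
Your cycle $(1)\Rightarrow(2)\Rightarrow(3)\Rightarrow(4)$ follows the paper's route (Proposition~\ref{propinf} to turn $2[p]_\alpha\leq[p]_\alpha$ into a proper infiniteness witness built from partial isometries in $C_c(\Gamma,A)$, then Theorem~\ref{purelyinfinite}, then tracelessness, then the state-to-trace argument using exactness), but the closing step $(4)\Rightarrow(1)$ as you describe it has a genuine gap: you claim that from the bare failure $2x\nleq x$ a Hahn--Banach-style separation produces a non-trivial state $f$ with $f(x)\in(0,\infty)$, and that almost unperforation is then used to pass back from the numerical inequality to the ordered one. This has the logic of almost unperforation inverted, and the separation claim is false without it. In a preordered monoid one can have, say, $3x\leq 2x$ while $2x\nleq x$ (exactly the situation almost unperforation rules out), and then \emph{every} state satisfies $3f(x)\leq 2f(x)$, forcing $f(x)\in\{0,\infty\}$; so no state normalized at $x$ exists even though $2x\nleq x$. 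Tarski's theorem (Theorem~\ref{Tarski}) produces a state only from \emph{complete} non-paradoxicality, i.e.\ $(n+1)x\nleq nx$ for all $n$. The correct order of operations, as in the paper and in R{\o}rdam--Sierakowski, is: assuming no non-trivial state, Tarski gives $(k+1)\theta\leq k\theta$ for some $k$; adding $\theta$ repeatedly yields $(k+1)(2\theta)\leq k\theta$; and \emph{then} almost unperforation (since $k+1>k$) gives $2\theta\leq\theta$. Equivalently, in your contrapositive formulation you must first use almost unperforation to upgrade $2x\nleq x$ to complete non-paradoxicality, and only then invoke (or reprove) Tarski; the separation argument cannot be run directly from $2x\nleq x$.

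Two smaller points. In $(1)\Rightarrow(2)$, the compression of an arbitrary non-zero hereditary subalgebra of $A\rtimes_{\lambda}\Gamma$ down to a non-zero positive element of $A$ is not a consequence of real rank zero and the expectation $\mathbb{E}$ alone; it is where proper outerness enters, via the Olesen--Pedersen estimate (Lemma~\ref{CuntzSmaller}), with $\RR(A)=0$ (property (SP)) only supplying the projection inside $\overline{aAa}$ afterwards. And in $(3)\Rightarrow(4)$ you pass from a non-trivial $[0,\infty]$-valued state on $S(A,\Gamma,\alpha)$ to a quasi-tracial state on $A$ without explaining why the induced function on $K_{0}(A)^{+}$ is finite (and non-zero) at $[1_A]_0$; this is exactly where minimality is needed, as in the finiteness Claim in Proposition~\ref{CNP iff state}, to dominate $[1_A]_0$ by finitely many $\hat\alpha$-translates of the element where the state is finite. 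With those repairs your argument coincides with the paper's proof of Theorem~\ref{purelyinfinitecross}.
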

As  a suitable quotient of $K_{0}(A)^+$, this type semigroup $S(A,\Gamma,\alpha)$ is purely infinite if and only if every positive element of $K_{0}(A)^+$ is paradoxical under the induced action with covering multiplicity at least two. Taking covering multiplicities into account, Kerr and Nowak~\cite{KN} consider completely non-paradoxical actions of a discrete group on the Cantor set. We do the same for noncommutative systems using ordered $K$-theory, and inevitably resort to Tarski's deep result (Theorem~\ref{Tarski}) to prove Theorem~\ref{stablyfinitecross}; of which the following is a special case.

\begin{thm} Let $A$ be a unital, separable and exact C*-algebra with stable rank one and real rank zero . Let  $\alpha:\Gamma\rightarrow\Aut(A)$ be a minimal action. Then the following are equivalent:
\begin{enumerate}
\item $A\rtimes_{\lambda}\Gamma$ admits a faithful tracial state.
\item $A\rtimes_{\lambda}\Gamma$ is stably finite.
\item $\alpha$ is completely non-paradoxical.
\end{enumerate}
Moreover, if $A$ is AF and $\Gamma$ is a free group, then $(1)$ through $(3)$ are all equivalent to $A\rtimes_{\lambda}\Gamma$ being MF in the sense of Blackadar and Kirchberg~\cite{BK}.
\end{thm}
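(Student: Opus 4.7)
The plan is to establish the cyclic chain $(1)\Rightarrow(2)\Rightarrow(3)\Rightarrow(1)$ and then address the moreover clause separately. The implication $(1)\Rightarrow(2)$ is standard: a faithful tracial state on $A\rtimes_{\lambda}\Gamma$ induces faithful traces on every matrix amplification, ruling out proper isometries in any $M_n(A\rtimes_{\lambda}\Gamma)$ and hence yielding stable finiteness. For $(2)\Rightarrow(3)$, I would argue by contrapositive. If $\alpha$ fails to be completely non-paradoxical, then some $[p]\in K_{0}(A)^+$ participates in a $K$-theoretic paradoxical decomposition giving a relation of the form $(n+1)[p]\leq n[p]$ in $S(A,\Gamma,\alpha)$. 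Using that this semigroup maps compatibly into the Cuntz semigroup of the crossed product (leveraging real rank zero, stable rank one, and the comparison machinery developed around Theorem~\ref{purelyinfinitecross}), such a relation forces a properly infinite projection in some matrix amplification of $A\rtimes_{\lambda}\Gamma$, contradicting stable finiteness.

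The crux is $(3)\Rightarrow(1)$, and this is where Tarski's alternative (Theorem~\ref{Tarski}) enters decisively. Completely non-paradoxical behavior of $\alpha$ translates, via Tarski, into the existence, for each non-zero $x\in S(A,\Gamma,\alpha)$, of a state on $S(A,\Gamma,\alpha)$ with $\varphi(x)=1$. Since $A$ is separable, $K_{0}(A)$ and hence $S(A,\Gamma,\alpha)$ are countable, so a standard convex-combination argument along a countable cofinal sequence of non-zero elements produces a single state on $S(A,\Gamma,\alpha)$ that is strictly positive on every non-zero element. Pulling back through the quotient $K_{0}(A)^+\to S(A,\Gamma,\alpha)$ yields a faithful $\Gamma$-invariant positive functional on $K_{0}(A)^+$; minimality of $\alpha$ is what guarantees this faithfulness, since the set on which the functional vanishes would otherwise give rise to a proper $\Gamma$-invariant ideal in $A$. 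Real rank zero and stable rank one now let me integrate this $K$-theoretic dimension function to an honest faithful $\Gamma$-invariant tracial state $\tau$ on $A$. Composing with the canonical faithful conditional expectation $E\colon A\rtimes_{\lambda}\Gamma\to A$ produces the desired faithful tracial state $\tau\circ E$ on the crossed product. The delicate point I expect to wrestle with is precisely this passage from a semigroup state to an honest faithful trace, where the regularity hypotheses on $A$ must be deployed in full.

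For the moreover clause, assume further that $A$ is AF and $\Gamma=F$ is a free group. Since MF algebras are always stably finite in the sense of Blackadar and Kirchberg, the implication (MF)$\,\Rightarrow(2)$ is immediate. The substantive direction is $(1)\Rightarrow$(MF): given a faithful trace on $A\rtimes_{\lambda}F$, one combines the residual finiteness of $F$ (so that the canonical unitaries implementing the action are asymptotically approximable by unitaries in finite-dimensional matrix algebras) with the AF inductive-limit structure of $A$ to assemble a sequence of finite-dimensional approximate representations of $A\rtimes_{\lambda}F$ that separate points, yielding a faithful embedding into the quotient $\prod_{k}M_{n_{k}}/\bigoplus_{k}M_{n_{k}}$, in the spirit of Haagerup and Thorbj{\o}rnsen's proof that the reduced free group $C^{*}$-algebra is MF. The main obstacle in this final step is coherently assembling the finite-dimensional data so that the approximations respect both the AF relations and the covariance, and then invoking the faithful trace from $(1)$ to certify that no non-zero element is killed in the limit.
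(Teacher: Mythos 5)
Your main chain $(1)\Rightarrow(2)\Rightarrow(3)\Rightarrow(1)$ follows the paper's architecture (Proposition~\ref{SF implies CNP}, Proposition~\ref{CNP iff state}, Tarski's Theorem~\ref{Tarski}, then exactness plus real rank zero to lift a $K_0$-state to a trace and $\tau\circ\mathbb{E}$ to pass to the crossed product), but the execution of $(3)\Rightarrow(1)$ has problems. The convex-combination step is both unnecessary and, as stated, broken: a \emph{cofinal} sequence only gives upper bounds $\varphi_k(x)\leq\varphi_k(x_k)$, not the lower bounds needed for strict positivity (you would need every non-zero element to dominate a member of the sequence, e.g.\ an enumeration of the countable semigroup); worse, Tarski states are $[0,\infty]$-valued and unnormalized, so $\sum_k 2^{-k}\varphi_k$ may be infinite at $[1_A]_{\alpha}$ or divergent, and without a finite value at the order unit you cannot produce a tracial state on $A$ at all. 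The paper's point is that no combination is needed: $K_0$-minimality (Theorem~\ref{min3}) makes a \emph{single} Tarski state $\nu$ with $\nu([g]_{\alpha})=1$ automatically finite everywhere (every class is dominated by $mn[g]_{\alpha}$ because translates of $g$ cover $[1]_0$) and makes the induced homomorphism $\beta$ on $K_0(A)$ automatically faithful ($\beta(x)\geq 1/(mn)$ for $x\neq 0$). Your faithfulness mechanism is also slightly off: the vanishing locus of $\beta$ generates a $\Gamma$-invariant \emph{order ideal} of $K_0(A)$, ruled out by Theorem~\ref{min3} (using real rank zero), not directly an ideal of $A$. Finally, in $(2)\Rightarrow(3)$ the relation $(n+1)\theta\leq n\theta$ does not force a \emph{properly} infinite projection without almost unperforation of $S(A,\Gamma,\alpha)$; what you get by pushing the inequality through $\hat\iota$ into $K_0(A\rtimes_{\lambda}\Gamma)$, as in Proposition~\ref{SF implies CNP}, is an infinite projection in a matrix algebra, which already contradicts stable finiteness — that is all you need and all that is available.

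The genuine gap is the moreover clause. The paper does not prove it from scratch: it cites~\cite{Ra}, where it is shown (via the MF-action/$K$-theoretic machinery, ultimately resting on Haagerup--Thorbj{\o}rnsen) that for $A$ AF and $\Gamma$ free, $A\rtimes_{\lambda}\Gamma$ is MF if and only if it is stably finite. Your sketch attempts to rederive this from residual finiteness of the free group, but $\cstar_{\lambda}(\mathbb{F}_r)$ is not residually finite-dimensional, and residual finiteness only controls the \emph{full} crossed product; producing norm-microstates for the canonical unitaries together with the AF core and the covariance relations is precisely the hard content of the quoted theorem. Likewise, "the faithful trace certifies that nothing is killed in the limit" is not an argument: MF embeddings into $\prod_k M_{n_k}/\bigoplus_k M_{n_k}$ are norm approximations and are not manufactured from a trace in this way. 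As written this part would not go through; either invoke the result of~\cite{Ra} (as the paper does, noting MF $\Rightarrow$ stably finite is the easy direction) or reconstruct its actual proof.
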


Combining these two results we obtain the desired dichotomy, albeit for a certain class of crossed products.

\begin{thm}\label{IntroDich}Let $A$ be a unital, separable, and exact C*-algebra with stable rank one and real rank zero. Let $\alpha:\Gamma\rightarrow\Aut(A)$ be a minimal and properly outer action with $S(A,\Gamma,\alpha)$ almost unperforated. Then the reduced crossed product $A\rtimes_{\lambda}\Gamma$ is simple and is either stably finite or purely infinite.
\end{thm}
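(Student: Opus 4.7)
The plan is to extract the dichotomy directly from the two theorems preceding in the introduction. Once simplicity of $A\rtimes_{\lambda}\Gamma$ is in hand, the dichotomy between stable finiteness and pure infiniteness reduces to asking whether or not the crossed product admits a tracial state, and each side of this tautological split is covered by one of the preceding theorems.

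First I would establish simplicity of $A\rtimes_{\lambda}\Gamma$. Under minimality and proper outerness of $\alpha$, it is by now classical (going back to Kishimoto's theorem on outer actions and variants thereof) that the reduced crossed product is simple: minimality rules out non-trivial $\alpha$-invariant ideals of $A$, and proper outerness ensures that every non-zero closed two-sided ideal of $A\rtimes_{\lambda}\Gamma$ meets $A$ in a non-zero $\alpha$-invariant ideal, which must then be all of $A$, forcing the ideal to be the whole crossed product.

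Next I would split into cases. Suppose $A\rtimes_{\lambda}\Gamma$ admits a tracial state $\tau$. Since the algebra is simple and unital, the set $\{a\in A\rtimes_{\lambda}\Gamma: \tau(a^*a)=0\}$ is a closed two-sided ideal, which must therefore be zero; in other words $\tau$ is faithful. Theorem~\ref{stablyfinitecross} (the second theorem above) then delivers stable finiteness. Otherwise $A\rtimes_{\lambda}\Gamma$ is traceless, and here Theorem~\ref{purelyinfinitecross} (the first theorem above) — which is precisely where the almost unperforation hypothesis on $S(A,\Gamma,\alpha)$ is consumed — yields pure infiniteness.

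The main obstacle is essentially deferred to the two preceding theorems, whose proofs carry the substantive weight of the argument (including the passage from tracelessness of the crossed product to pure infiniteness via the type semigroup, and the Tarski-style characterization of stable finiteness by complete non-paradoxicality). What remains here is light: verifying simplicity in the noncommutative \cstar-algebra setting using proper outerness, and recalling the standard fact that tracial states on simple unital algebras are automatically faithful, so that the weaker ``admits a tracial state'' suffices to trigger the faithful-trace hypothesis of Theorem~\ref{stablyfinitecross}.
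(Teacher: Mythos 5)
Your proposal is correct and follows essentially the same route as the paper: simplicity comes from minimality plus proper outerness (the paper cites Theorem 7.2 of~\cite{OP}), and the dichotomy is exactly the combination of Theorem~\ref{stablyfinitecross} and Theorem~\ref{purelyinfinitecross} via the case split on whether $A\rtimes_{\lambda}\Gamma$ admits a tracial state. Your observation that any tracial state on the simple unital crossed product is automatically faithful (so that hypothesis (2) of Theorem~\ref{stablyfinitecross} is triggered) is precisely the small bridging detail the paper leaves implicit.
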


We round off the introduction with a brief description of the contents of this article. We begin by reviewing the necessary concepts, definitions, and results that will be assumed throughout.  In section 2 we look at minimal and topologically transitive actions with our gaze focused on the induced $K$-theoretic dynamics. Minimal actions will be characterized by a certain filling condition which will be shown to be equivalent to $K_0(A)$ admitting no non-trivial invariant order ideals (see Theorem~\ref{min3}). We then extend the notion of a topologically transitive action to noncommutative \cstar-systems  and relate such actions to the primitivity of the reduced crossed product (Theorem~\ref{ToptransPrime}). Section 3 explores the theme of finiteness and infiniteness of discrete reduced crossed products.  For any \cstar-dynamical system $(A,\Gamma,\alpha)$ we give meaning to paradoxical and completely non-paradoxical actions. We show that paradoxical type actions give rise to infinite crossed products. When the underlying algebra $A$ has a well behaved $K_{0}(A)$ group and the action is minimal and properly outer, we characterize stably finite and purely infinite discrete crossed product by means of the type semigroup $S(A,\Gamma,\alpha)$. (Theorem~\ref{stablyfinitecross}, and Theorem~\ref{purelyinfinitecross}). 

The author would like to express a deep sense of gratitude to his adviser David Kerr for his unending support. Special thanks are reserved for Adam Sierakowski and Christopher Phillips for many meaningful discussions and answered inquiries. 

\newpage

\section{Preliminaries}

Unless otherwise specified, we make the blanket assumption that all \cstar-algebras $A$ will be considered separable and with unit $1_{A}$, and all groups $\Gamma$ will be discrete. We write $\GL(A)$ for the set of invertibles in $A$, and $A_{\sa}$ for the set of  self-adjoint elements. The \cstar-algebra $A$ is of stable rank one, written $\sr(A)=1$, if $\GL(A)\subset A$ is norm-dense, and $A$ is of real rank zero, written $\RR(A)=0$, if $\GL(A)\cap A_{\sa}\subset A_{\sa}$ is norm-dense. 

This article is $K$-theoretic in flavor; the reader may want to consult~\cite{Bl} for a suitable treatment thereof, as well as~\cite{APT} for the necessary results concerning the Cuntz semigroup. We briefly outline the story-line of $K_{0}(A)$ and $W(A)$ here. If $A$ is a \cstar-algebra, $M_{m,n}(A)$ will denote the linear space of all $m\times n$ matrices with entries from $A$. The square $n\times n$ matrices $M_{n}(A)$ is a \cstar-algebra with positive cone $M_{n}(A)^+$. If $a\in M_{n}(A)^+$ and $b\in M_{m}(A)^+$, write $a\oplus b$ for the matrix $\diag(a,b)\in M_{n+m}(A)^+$.  Set $M_{\infty}(A)^+=\bigsqcup_{n\geq1} M_{n}(A)^+$; the set-theoretic direct limit of the $M_{n}(A)^+$ with connecting maps $M_{n}(A)\rightarrow M_{n+1}(A)$ given by $a\mapsto a\oplus 0$. Write $\mathcal{P}(A)$ for the set of projections in $A$ and  set $\mathcal{P}_{\infty}(A)=\bigsqcup_{n\geq1}\mathcal{P}(M_{n}(A))$. Elements $a$ and $b$ in $M_{\infty}(A)^{+}$ are said to be \emph{Pedersen-equivalent}, written $a\sim b$, if there is a matrix $v\in M_{m,n}(A)$ with $v^*v=a$ and $vv^*=b$. We say that $a$ is \emph{Cuntz-subequivalent} to (or \emph{Cuntz-smaller} than) $b$, written $a\precsim b$, if there is a sequence $(v_{k})_{k\geq1}\subset M_{m,n}(A)$ with $\|v_k^*bv_k-a\|\rightarrow 0$ as $k\rightarrow\infty$.  If $a\precsim b$ and $b\precsim a$ we say that $a$ and $b$ are \emph{Cuntz-equivalent} and write $a\approx b$. It is routine to check that $\sim$ and $\approx$ are equivalence relations on $M_{\infty}(A)^+$ and that $a\sim b$ implies $a\approx b$.  It is customary to write $V(A)=\mathcal{P}_{\infty}(A)/\sim$, and $[p]$ for the equivalence class of $p\in \mathcal{P}_{\infty}(A)$. Also set $W(A):=M_{\infty}(A)^+/\approx$  and write $\langle a\rangle$ for the class of $a\in M_{\infty}(A)^+$. $W(A)$ has the structure of a preordered abelian monoid with addition given by $\langle a\rangle+\langle b\rangle=\langle a\oplus b\rangle$ and preorder $\langle a\rangle\leq\langle b\rangle$ if $a\precsim b$. This monoid $W(A)$ will be referred to as the \emph{Cuntz semigroup} of $A$. With addition and ordering identical to that of $W(A)$, $V(A)$ is also a preordered abelian monoid. There is a cardinal difference between the orderings on $V(A)$ and $W(A)$; the ordering on $W(A)$ extends the algebraic ordering ($x,y,z\in W(A)$ with $x+y=z$ implies $x\leq z$) but only in rare cases agrees with it. With $V(A)$, the ordering agrees with the algebraic one. Indeed, one verifies that for projections $p,q\in\mathcal{P}_{\infty}(A)$, $p\precsim q$ if and only if there is a subprojection $r\leq q$ with $p\sim r$ if and only if $p\oplus p'\sim q$ for some $p'\in\mathcal{P}_{\infty}(A)$. Thus $[p]\leq[q]$ implies that $[p]+[p']=[q]$. As a brief reminder, $K_{0}(A)=\mathcal{G}(V(A))$ the Grothendieck enveloping group of $V(A)$ and $[p]_{0}=\gamma([p])$ where $\gamma:V(A)\rightarrow K_{0}(A)$ is the canonical Grothendieck map. 

A projection $p$ in $A$ is infinite if $p\sim q$ for some subprojection $q\lneq p$. It was shown in~\cite{KR2} that $p$ infinite if and only if $p\oplus b\precsim p$ for some non-zero $b\in M_{\infty}(A)^+. $ A unital \cstar-algebra $A$ is said to be infinite if $1_A$ is infinite. Otherwise, $A$ is called finite. If $M_{n}(A)$ is finite for every $n\in\mathbb{N}$ then $A$ is called stably finite.  Recall that a unital, stably finite \cstar-algebra $A$ yields an ordered abelian group $K_{0}(A)$ with positive cone $K_{0}(A)^{+}:=\gamma(V(A))$ and order unit $[1_{A}]_{0}$. Occasionally we shall require our algebras to have \emph{cancellation}, which simply means that $\gamma$ is injective. It is routine to check that algebras with stable rank one are stably finite and have cancellation. Moreover, when $A$ is stably finite the map $V(A)\rightarrow W(A)$, $[p]\mapsto\langle p\rangle$ is injective. Recall that a semigroup $K$ has the Riesz refinement property if, whenever $\sum_{j=1}^{n}x_j=\sum_{i=1}^{m}y_i$, for members $x_1,\dots,x_n,y_1,\dots,y_m\in K$, there exist $\{z_{ij}\}_{i,j}\subset K$ satisfying $\sum_{i}z_{ij}=x_j$ and $\sum_{j}z_{ij}=y_i$ for each $i$ and $j$. If $A$ is a stably finite  algebra with $\RR(A)=0$ then S. Zhang showed that $K_0(A)^+$ has the Riesz refinement property ~\cite{Zh2}.

A \emph{transformation group} is a pair $(X,\Gamma)$ where $X$ is a locally compact Hausdorff space endowed with a continuous action $\Gamma\curvearrowright X$. By a \emph{\cstar-dynamical system} we mean a triple $(A,\Gamma,\alpha)$, where $A$ is a \cstar-algebra, and $\alpha :\Gamma \rightarrow\Aut(A)$ a group homomorphism into $\Aut(A)$; the topological group of automorphisms of $A$ with the point-norm topology. In the case where $A$ is a commutative algebra, say $A=C(X)$ for some compact Hausdorff space $X$, \cstar-systems $(C(X),\Gamma,\alpha)$ are in one-to-one correspondence with transformation groups $(X,\Gamma)$ via the formula $\alpha_{s}(f)(x)=f(s^{-1}.x)$ where $s\in\Gamma, f\in C(X), x\in X$.

A \cstar-dynamical system induces a natural action at the $K$-theoretical level, and the order theoretical dynamics will reflect information about the nature of the action and will often describe the structure of the crossed product.   If $(G, G^{+},u)$ and $(H,H^{+},v)$ are ordered abelian groups each with their distinguished order units, a morphism in this category is a group homomorphism $\beta:G\rightarrow H$ which is positive and order unit preserving, i.e. $\beta(G^{+})\subset H^{+}$, and $\beta(u)=v$ respectively. We also write
\[\OAut(G):=\{\tau\in\mbox{Aut}(G): \tau(G^{+})=G^{+}, \tau(u)=u\}\]
for the set of ordered abelian group automorphisms.  Recall that if $X$ is a zero-dimensional compact metric space, $K_{0}(C(X))\cong C(X;\mathbb{Z})$ with natural point-wise ordering. The $K_{0}$-functor is covariant, namely, if $\phi:A\rightarrow B$ is a $\ast$-homomorphism ($\ast$-automorphism), one obtains a positive group homomorphism (ordered group automorphism) $K_{0}(\phi): K_{0}(A)\rightarrow K_{0}(B)$ defined by $K_{0}(\phi)([p]_{0})=[\phi(p)]_{0}$ where $p\in\mathcal{P}_{\infty}(A)$. For economy we sometimes write $\hat\phi=K_{0}(\phi)$. Note that for every action $\alpha:\Gamma \rightarrow\Aut(A)$, there is an induced action $\hat\alpha:\Gamma\rightarrow\OAut(K_{0}(A))$ where $\hat\alpha(s)= \hat\alpha_{s}:K_{0}(A)\rightarrow K_{0}(A)$ is the induced automorphism. Again, in the case of stable finiteness, the positive cone $K_{0}(A)^{+}$ is a partially ordered monoid, whose ordering is inherited from $K_{0}(A)^{+}$ and coincides with the algebraic ordering. Restricting $\hat\alpha$ to $K_{0}(A)^{+}$ also gives an action of order isomorphisms. In the same manner a \cstar-system $(A,\Gamma,\alpha)$ induces an action $\hat\alpha:\Gamma\rightarrow\OAut(W(A))$ via $\hat\alpha_{s}(\langle a\rangle)=\langle \alpha_{s}(a)\rangle$, where $s\in\Gamma$, and $a\in M_{\infty}(A)^+$. Here $\OAut(W(A))$ will denote the set of monoid isomorphisms of $W(A)$ which respect the ordering.

Given a $C^{*}$-dynamical system $(A,\Gamma,\alpha)$, we write $A\rtimes_{\alpha}\Gamma$ to denote the full crossed product \cstar-algebra whereas $A\rtimes_{\lambda,\alpha}\Gamma$ will stand for the reduced algebra (at times we will omit the $\alpha$). We briefly recall their construction and refer the reader to~\cite{BO}, \cite{Wi} and~\cite{Ph} for more details. First consider the algebraic crossed product $A\rtimes_{\alg,\alpha}\Gamma$ which is the complex linear space of all finitely supported functions $C_{c}(\Gamma,A)=\{\sum_{s\in F}a_{s}u_{s}: F\subset\Gamma, a_{s}\in A\}$, equipped with a twisted multiplication and involution: for $s,t\in\Gamma, a,b\in A$
\begin{eqnarray*}(au_{s})(bu_{t})&=&a\alpha_{s}(b)u_{st},\\
(au_{s})^{*}&=&\alpha_{s^{-1}}(a^{*})u_{s^{-1}}.
\end{eqnarray*}
If $A\subset\mathbb{B}(\mathcal{H})$ is faithfully represented (the choice of representation is immaterial), the $*$-algebra $A\rtimes_{\alg,\alpha}\Gamma$ can then be faithfully represented as operators on $\mathcal{H}\otimes\ell^{2}(\Gamma)$ via $au_{s}(\xi\otimes\delta_{t})=\alpha_{st}^{-1}(a)\xi\otimes\delta_{st}$ for $\xi\in\mathcal{H}$ and $s,t\in\Gamma$. Completing with respect to the operator norm on $\mathbb{B}(\mathcal{H}\otimes\ell^{2}(\Gamma))$ gives the reduced crossed product $A\rtimes_{\lambda,\alpha}\Gamma$. To realize the full crossed product, for each $x\in A\rtimes_{\alg,\alpha}\Gamma$, consider
\[\|x\|_{u}=\sup\|\pi(x)\|_{\mathbb{B}(\mathcal{H})}\]
where the supremum runs through all (non-degenerate) $*$-representations  $\pi:A\rtimes_{\alg,\alpha}\Gamma\rightarrow\mathbb{B}(\mathcal{H})$. Then \[A\rtimes_{\alpha}\Gamma:=A\rtimes_{\alg,\alpha}\Gamma^{-\|\cdot\|_{u}}.\]
We will at times make use of the conditional expectation $\mathbb{E}: A\rtimes_{\lambda,\alpha}\Gamma\rightarrow A$, which is a unital, contractive, completely positive map satisfying $\mathbb{E}(\sum_{s\in\Gamma}a_su_s)=a_e$.

\section{Minimality and Topological Transitivity}

In this section we develop $K$-theoretic descriptions of minimality and topological transitivity for \cstar-systems, primarily in the noncommutative setting. These formulations will be useful when describing the structure of the resulting reduced crossed product algebra.

For a general \cstar-dynamical system $\alpha:\Gamma\rightarrow\Aut(A)$, we say that $\alpha$ is \emph{minimal} (or equivalently we call $A$ $\Gamma$-\emph{simple}) if $A$ admits no non-trivial invariant ideals, that is, there does not exist an ideal $(0)\neq I\subsetneqq A$ with $\alpha_{s}(I)=I$ for every $s\in\Gamma$. Note that \emph{ideals} in the category of \cstar-algebras will always be assumed to be closed, and the term \emph{algebraic ideal} will be reserved for ideals in the algebraic sense, that is, not necessarily closed. If $A$ has a unit, it is routine to check that $A$ admits a non-trivial invariant (closed) ideal if and only if $A$ contains a non-trivial invariant algebraic ideal. Since every ideal in $M_{n}(A)$ is of the form $M_{n}(I)$ for an ideal $I\subset A$, it follows easily that if $A$ is $\Gamma$-simple, then $M_{n}(A)$ is $\Gamma$-simple as well, where the action $\Gamma\curvearrowright M_{n}(A)$ is given by amplification $s\mapsto\alpha_{s}^{(n)}\in\Aut(M_{n}(A))$.

The notion of a minimal action $\alpha:\Gamma\curvearrowright A$ is tied to the simplicity of the corresponding reduced crossed product $A\rtimes_{\lambda,\alpha}\Gamma$.  Recall that a \cstar-algebra is simple if it contains no non-trivial (closed) ideals. Indeed, given a action $\alpha:\Gamma\curvearrowright A$, with a non-trivial $\Gamma$-invariant ideal $I\subset A$, one readily sees that $I\rtimes_{\lambda,\alpha}\Gamma$ is a non-trivial ideal in $A\rtimes_{\lambda,\alpha}\Gamma$, since $(I\rtimes_{\lambda,\alpha}\Gamma)\cap A=I\neq A=(A\rtimes_{\lambda,\alpha}\Gamma)\cap A$. Therefore, a necessary condition for the reduced crossed product to be simple is minimality of the action. However, the absence of invariant ideals does not always ensure simplicity of the crossed product algebra. In some cases, however, minimality is enough to ensure a simple reduced crossed product. We record here some of the these examples.

A discrete group $\Gamma$ is said to be \emph{exact} provided that its reduced group \cstar-algebra $\cstar_{\lambda}(\Gamma)$ is exact, or equivalently, if it admits an amenable action on some compact space. Exact groups include all amenable groups and all free groups $\mathbb{F}_{r}$ for $r\in\{1,2,\dots,\infty\}$. An action $\Gamma\curvearrowright X$ is said to be \emph{free} if for each $x\in X$, the isotropy group $\{s\in\Gamma: s.x=x\}$ is trivial. It is shown in~\cite{SR} that if  $\Gamma\curvearrowright X$ is a free action of an exact group on a locally compact Hausdorff space, the reduced crossed product $C_{0}(X)\rtimes_{\lambda}\Gamma$ is simple if and only if the action is minimal.

A group $\Gamma$ is called a \emph{Powers group} if the following holds: For every finite set $F\subset \Gamma$ and integer $n\in\mathbb{N}$ there is a partition $\Gamma=E\sqcup D$ and elements $t_{1},\dots, t_{n}\in\Gamma$ such that
\begin{enumerate}
\item $sD\cap rD=\emptyset$ for every $s,r\in F$ with $s\neq r$,
\item $t_{j}E\cap t_{k}E=\emptyset$ for every $j,k\in\{1,\dots,n\}$ with $j\neq k$.
\end{enumerate}
It was shown in~\cite{Ha} that Powers' groups are non-amenable and have infinite conjugacy classes. Also, Powers showed that non-abelian free groups are Powers groups. In~\cite{HS} P. de la Harpe and G. Skandalis showed that an action $\alpha:\Gamma\rightarrow\Aut(A)$ of a Powers' group on a unital algebra $A$ is minimal if and only if $A\rtimes_{\lambda,\alpha}\Gamma$ is simple.

For general \cstar-systems $(A,\Gamma,\alpha)$, an extra condition is needed over and above minimality to ensure a simple reduced crossed product. Recall that an automorphism $\alpha$ in $\Aut(A)$ is said to be \emph{properly outer} if and only if for every invariant ideal $I\subset A$ and inner automorphism $\beta$ in $\Inn(I)$ we have $\|\alpha|_{I}-\beta\|=2$. An action $\alpha:\Gamma\rightarrow\Aut(A)$ is said to be \emph{properly outer} if for every $e\neq t\in\Gamma$, $\alpha_{t}$ is properly outer. The following result is Theorem 7.2 in~\cite{OP}.

\begin{thm} Let $(A,\Gamma,\alpha)$ be a \cstar-dynamical system with $\Gamma$ discrete and $A$ separable. If $\alpha$ is minimal and properly outer, then $A\rtimes_{\lambda,\alpha}\Gamma$ is simple.
\end{thm}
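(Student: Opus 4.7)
The plan is to show that any non-zero closed two-sided ideal $J\subset A\rtimes_{\lambda,\alpha}\Gamma$ must contain $1_A$. The reduction is standard: if $J\cap A\neq 0$ then $J\cap A$ is a closed ideal of $A$, and it is $\Gamma$-invariant because $u_s a u_s^{*}=\alpha_s(a)$ for $a\in A$; minimality then forces $J\cap A=A$, so $1_A\in J$ and $J$ exhausts the whole crossed product. I would argue the remaining implication by contradiction: assume $J\cap A=0$, so that the composition $A\hookrightarrow A\rtimes_{\lambda,\alpha}\Gamma\twoheadrightarrow(A\rtimes_{\lambda,\alpha}\Gamma)/J$ is an injective $*$-homomorphism and hence isometric on $A$; this isometry is the tool that will deliver the contradiction.

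Pick a positive $y\in J$ with $\|y\|=1$. Faithfulness of the canonical conditional expectation $\mathbb{E}$ on positive elements (a standard feature of reduced crossed products by discrete groups) gives $a:=\mathbb{E}(y)\in A$ positive with $\delta:=\|a\|>0$. For a small $\eta>0$ to be chosen, approximate $y$ by $y_0=\sum_{s\in F}a_s u_s\in C_c(\Gamma,A)$ with $\|y-y_0\|<\eta$, $e\in F$, and $a_e=\mathbb{E}(y_0)$, so that $\|a_e-a\|<\eta$. The crucial input is Kishimoto's averaging lemma, which is tailored precisely to proper outerness: for any finite collection of properly outer automorphisms $\beta_1,\dots,\beta_n$ of the separable \cstar-algebra $A$, elements $b_1,\dots,b_n\in A$, a non-zero positive $c\in A$, and $\varepsilon>0$, there is a positive contraction $z\in A$ satisfying $\|z b_i\beta_i(z)\|<\varepsilon$ for each $i$ and $\|z c z\|>\|c\|-\varepsilon$. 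I would apply this with $\beta_i=\alpha_{s_i}$ for $F\setminus\{e\}=\{s_1,\dots,s_n\}$, $b_i=a_{s_i}$, $c=a$, and $\varepsilon=\eta/(n+1)$, to produce $z$.

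Starting from the decomposition
\[
zy_0z=za_ez+\sum_{i=1}^n z a_{s_i}\alpha_{s_i}(z)\,u_{s_i},
\]
the tail has norm at most $n\varepsilon<\eta$, and combining with $\|y-y_0\|<\eta$ and $\|a_e-a\|<\eta$ yields $\|zyz-zaz\|<3\eta$ while $\|zaz\|_A>\delta-\eta$. Passing to the quotient, $\pi(zyz)=0$ forces $\|\pi(zaz)\|<3\eta$, and isometry of the embedding of $A$ then gives $\|zaz\|_A<3\eta$, which contradicts $\|zaz\|_A>\delta-\eta$ as soon as $\eta<\delta/4$. The principal obstacle is Kishimoto's lemma itself; its proof requires a delicate spectral analysis of properly outer automorphisms, typically through the Arveson/Connes spectrum of the bidual extension, and it is here that separability of $A$ enters. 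Since the theorem is cited as an external result from Olesen--Pedersen~\cite{OP}, I would treat Kishimoto's lemma as a black box, making the crossed-product bookkeeping above entirely routine.
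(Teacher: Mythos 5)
Your proof is correct, and it is essentially the standard argument: the paper itself does not prove this statement but quotes it as Theorem 7.2 of Olesen--Pedersen, and your route --- reduce to the intersection property via the quotient by an ideal $J$ with $J\cap A=0$, then contradict it using the Kishimoto-type averaging lemma (Lemma 7.1 of that same reference) applied to the coefficients of an approximant of a positive element of $J$ --- is precisely how that result is obtained there. Note that the paper uses the identical device later, in its lemma producing a non-zero positive $a\in A$ with $a\precsim b$ for $b\in(A\rtimes_{\lambda}\Gamma)^+$ under proper outerness, so your bookkeeping with $\mathbb{E}$, the finite support $F$, and the element $z$ matches the techniques already present in the text; treating the averaging lemma as a black box is consistent with how the paper treats it.
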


\subsection{K-Theoretic Minimality}
In the classical setting, a continuous action $\Gamma\curvearrowright X$ of a discrete group on a compact Hausdorff space is said to be \emph{minimal} if the action admits no non-trivial closed invariant sets, that is, there is no closed subset $\emptyset\neq Y\subsetneqq X$ with $s.Y=Y$ for every $s\in\Gamma$. A well known example of a minimal action is that of  an irrational rotation $\mathbb{Z}\curvearrowright\mathbb{T}$, given by $n.z=\omega^{n}z$, where $\omega=\exp(2\pi i\theta)$ for an irrational $\theta$. This, of course, agrees with the notion of a minimal action above. The equivalence of (1), (2), and (4) in the following proposition is well known  and standard in dynamics, whereas statement (3) is tailored here to serve as motivation for our work below.

\begin{prop}\label{min0}Let $\Gamma\curvearrowright X$ be a continuous action on a compact Hausdorff space, and let $\alpha:\Gamma\curvearrowright C(X)$ denote the induced action. The following are equivalent:
\begin{enumerate}
\item The action is minimal.
\item For every $x$ in $X$, the orbit $\Orb(x)=\{s.x\ |\ s\in\Gamma\}$ is dense in $X$.
\item For any non-empty open set $E\subset X$, there are elements $t_{1},\dots, t_{n}$ in $\Gamma$ such that
\[\bigcup_{j=1}^{n}t_{j}.E = X.\]
\item The $\Gamma$-algebra $C(X)$ is $\Gamma$-simple under the associated action.
\end{enumerate}
\end{prop}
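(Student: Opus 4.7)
The plan is to establish (1) $\iff$ (4) via the classical correspondence between closed ideals of $C(X)$ and open subsets of $X$, and then close the cycle (1) $\Rightarrow$ (2) $\Rightarrow$ (3) $\Rightarrow$ (1).

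For (1) $\iff$ (4), I would invoke the well-known Galois-type bijection $U \mapsto I_U := \{f\in C(X) : f|_{X\setminus U} = 0\}$ between open subsets $U\subset X$ and closed ideals of $C(X)$. Because the induced action is $\alpha_s(f)(x) = f(s^{-1}.x)$, one checks $\alpha_s(I_U) = I_{s.U}$, so the bijection restricts to one between $\Gamma$-invariant open sets and $\Gamma$-invariant closed ideals. Passing to complements transforms invariant open sets into invariant closed sets, and this shows non-existence of proper non-trivial invariant closed subsets (minimality) is equivalent to non-existence of proper non-trivial invariant closed ideals ($\Gamma$-simplicity).

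For (1) $\Rightarrow$ (2), given $x\in X$, the closure $\overline{\Orb(x)}$ is a non-empty closed $\Gamma$-invariant subset of $X$, hence equals $X$ by minimality. For (2) $\Rightarrow$ (3), let $E\subset X$ be non-empty open. For each $x\in X$, density of $\Orb(x)$ forces $\Orb(x)\cap E \neq \emptyset$, so there is some $t\in\Gamma$ with $t^{-1}.x\in E$, i.e., $x\in t.E$. Hence $X = \bigcup_{t\in\Gamma} t.E$ is an open cover, and compactness of $X$ yields a finite subcover $X = \bigcup_{j=1}^n t_j.E$.

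Finally, for (3) $\Rightarrow$ (1), suppose $Y\subset X$ is a non-empty closed invariant set; I want to show $Y = X$. If not, $E := X\setminus Y$ is a non-empty open set, and by (3) there are $t_1,\dots,t_n\in\Gamma$ with $X = \bigcup_{j=1}^n t_j.E$. But $\Gamma$-invariance of $Y$ gives $t_j.E = t_j.(X\setminus Y) = X\setminus t_j.Y = X\setminus Y = E$ for every $j$, so the union collapses to $E = X\setminus Y \neq X$, a contradiction. Hence $Y = X$, proving minimality. The only place anything beyond routine bookkeeping is used is the compactness argument in (2) $\Rightarrow$ (3) and the ideal-to-closed-set dictionary underlying (1) $\iff$ (4); neither poses a real obstacle in this compact unital setting.
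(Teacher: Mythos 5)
Your proposal is correct and follows essentially the same route as the paper: the orbit-closure argument for $(1)\Rightarrow(2)$, density plus compactness for $(2)\Rightarrow(3)$, a complement/invariance contradiction for $(3)\Rightarrow(1)$, and the standard closed-set/ideal dictionary (with equivariance $\alpha_s(I_U)=I_{s.U}$) for $(1)\Leftrightarrow(4)$. The only differences are cosmetic: your $(2)\Rightarrow(3)$ covers $X$ directly by translates rather than via the paper's contradiction with finite unions, and your $(3)\Rightarrow(1)$ collapses the cover instead of chasing a single point of $Y$.
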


\begin{proof}$(1)\Rightarrow(2)$: Fix $x\in X$, and set $Y=\overline{\Orb(x)}$. For $s\in\Gamma$, note that $s.\Orb(x)=\Orb(x)$, so taking closures we get
\[s.Y=s.\overline{\Orb(x)}=\overline{s.\Orb(x)}=\overline{\Orb(x)}=Y.\]
Since the action is minimal and $\emptyset\neq Y$, we have that $\overline{\Orb(x)}=Y=X$.

$(2)\Rightarrow(3)$: Let $\emptyset\neq E\subset X$ be open. For each finite subset $F=\{t_{1},\dots,t_{k}\}\subset\Gamma$, put $E_{F}=\cup_{j=1}^{k}t_{j}.E$. Denoting by $\mathcal{F}$ the collection of all finite sets of $\Gamma$, we claim that $\cup_{F\in\mathcal{F}}E_{F}=X$. Given the claim, compactness allows for a finite subcover $\cup_{j=1}^{J}E_{F_{j}}=X$, and thus $E_{F}=X$ where $F=\cup_{j=1}^{J}F_{j}$ which proves $(2)\Rightarrow(3)$.

To prove the claim, assume there is an $x\in X\backslash \cup_{F\in\mathcal{F}}E_{F}$. By hypothesis, $\Orb(x)$ is dense in $X$, and since $\cup_{F\in\mathcal{F}}E_{F}$ is open, there is a $z\in \cup_{F\in\mathcal{F}}E_{F}\cap\Orb(x)$. We can then write $z=s.x\in E_{F}$ for some finite set $F$ and some $s\in\Gamma$, so that $z=s.x\in t.E$ for a certain $t$, yielding $x\in(s^{-1}t).E$, a contradiction.

$(3)\Rightarrow(4)$: This direction is even easier. Suppose there is a non-trivial closed invariant set $Y$. Then $\emptyset\neq X\backslash Y=:E$ By assumption there are group elements $t_{1},\dots, t_{n}$ with $\bigcup_{j=1}^{n}t_{j}.E = X$. Thus for a point $y\in Y$, we have that $y\in t_{j}.E$ for some $j$ whence $t_{j}^{-1}.y$ belongs to $E\cap Y=\emptyset$ by invariance, which is absurd.

$(4)\Leftrightarrow(1)$: Every ideal in $C(X)$ is of the form $J_{Y}=\{f\in C(X)|\ f|_{Y}=0\}$ for some closed set $Y\subset X$. Note that $J_{Y}$ is a non-trivial and invariant if and only if $Y$ is non-trivial and invariant.
\end{proof}

An important remark on statement (3) is in order. Jolissaint and Robertson (\cite{JR}) introduced the notion of an \emph{$n$-filling action} for general \cstar-systems $(A,\Gamma,\alpha)$, which in the commutative case is equivalent to a generalized global version of hyperbolicity~\cite{LS}. More precisely, for a given integer $n\geq2$, an action $\Gamma\curvearrowright X$ of a discrete group on a compact Hausdorff space is $n$-filling if and only if for any non-empty open subsets of $X$, $E_1,\dots,E_n$, there are group elements $t_{1},\dots,t_n$ with $t_1.E_1\cup\dots\cup t_n.E_n=X$. Thus, by Proposition~\ref{min0}, an $n$-filling action is minimal. We shall see in Proposition~\ref{nfilling=nminimal} below that the $n$-filling property is equivalent to the apparently weaker condition: given any non-empty open subset $E$, there are group elements $t_{1},\dots,t_n$ with $t_1.E\cup\dots\cup t_n.E=X$. The subtle difference is that the given integer $n$ is fixed in the $n$-filling property whereas it is not necessarily bounded in Proposition~\ref{min0}.

When the space $X$ is zero-dimensional, other characterizations of minimality will be useful, indeed, they will motivate a suitable notion of $K$-theoretic minimality in the noncommutative case. Here we write $C(X;\mathbb{Z})$  for the dimension group of all continuous integer-valued functions on $X$, and $\mathcal{C}_{X}$ for the collection of all clopen subsets of a topological space $X$. The action on the underlying space induces a natural action of order automorphisms $\beta:\Gamma\rightarrow\OAut(C(X;\mathbb{Z}))$, given by $\beta_{s}(f)(x)=f(s^{-1}.x)$ for $s\in\Gamma$ and $f\in C(X;\mathbb{Z})$.

\begin{prop}\label{min1} Let $\Gamma\curvearrowright X$ be a continuous action on a compact, zero-dimensional metrizable space. Then the following are equivalent:
\begin{enumerate}
\item The action is minimal.
\item For any non-empty clopen set $E\subset X$, there are elements $t_{1},\dots, t_{n}$ in $\Gamma$ such that
\[\bigcup_{j=1}^{n}t_{j}.E = X.\]
\item For every non-zero positive function $f\in C(X;\mathbb{Z})^{+}$, there are elements $t_{1},\dots, t_{n}$ in $\Gamma$ such that
     \[\sum_{j=1}^{n}\beta_{t_{j}}(f)\geq \mathbf{1}_{X}.\]
\end{enumerate}
\end{prop}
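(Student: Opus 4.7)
The proof proposal is to verify the cycle $(1)\Rightarrow(2)\Rightarrow(3)\Rightarrow(2)\Rightarrow(1)$, exploiting the zero-dimensionality of $X$ to freely pass between open sets, clopen sets, and integer-valued continuous functions.

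For $(1)\Rightarrow(2)$ the plan is to simply invoke Proposition~\ref{min0}(3), since every clopen set is in particular a non-empty open set. For the converse $(2)\Rightarrow(1)$, the idea is: given a non-trivial closed invariant set $Y$, pick $x_0\in X\setminus Y$ and use that $X$ is compact, Hausdorff, and zero-dimensional to produce a clopen neighborhood $E$ of $x_0$ contained in $X\setminus Y$. Applying (2) yields $t_1,\dots,t_n$ with $\bigcup_j t_j.E=X$; but then any $y\in Y$ would lie in some $t_j.E$, forcing $t_j^{-1}.y\in E\cap Y=\emptyset$ by invariance of $Y$, a contradiction.

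The equivalence $(2)\Leftrightarrow(3)$ is the translation step from dynamics to the $K$-theoretic language. For $(3)\Rightarrow(2)$, given a non-empty clopen $E$, apply (3) to the indicator $f=\mathbf{1}_E\in C(X;\mathbb{Z})^+$; since $\beta_{t}(\mathbf{1}_E)=\mathbf{1}_{t.E}$, the inequality $\sum_j\mathbf{1}_{t_j.E}\geq\mathbf{1}_X$ forces $\bigcup_j t_j.E=X$. For $(2)\Rightarrow(3)$, given a non-zero $f\in C(X;\mathbb{Z})^+$, observe that since $f$ is continuous into the discrete space $\mathbb{Z}$ it is locally constant, so the level set
\[E:=\{x\in X: f(x)\geq 1\}=f^{-1}(\mathbb{Z}_{\geq 1})\]
is both open and closed, and is non-empty because $f\neq 0$. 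Apply (2) to obtain $t_1,\dots,t_n$ with $\bigcup_j t_j.E=X$. Since $f\geq\mathbf{1}_E$ pointwise and $\beta_{t_j}$ is order-preserving, we get $\beta_{t_j}(f)\geq\mathbf{1}_{t_j.E}$ and hence $\sum_j\beta_{t_j}(f)\geq\sum_j\mathbf{1}_{t_j.E}\geq\mathbf{1}_X$.

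No step here should present a serious obstacle; the only place requiring minor care is the passage from an arbitrary open set to a clopen subset in $(2)\Rightarrow(1)$, and the observation that integer-valued continuous functions on a zero-dimensional space have clopen level sets, which is exactly what makes the $K$-theoretic formulation (3) a faithful reflection of the topological filling condition (2).
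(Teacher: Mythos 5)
Your proposal is correct and follows essentially the same route as the paper: $(1)\Leftrightarrow(2)$ via zero-dimensionality (passing from the open set $X\setminus Y$ to a clopen subset) and $(2)\Leftrightarrow(3)$ by comparing $f$ with an indicator function and using order-preservation of the $\beta_{t_j}$. The only cosmetic difference is that for $(2)\Rightarrow(3)$ you take the clopen level set $\{f\geq 1\}$ directly, whereas the paper writes $f=\sum_j n_j\mathbf{1}_{E_j}$ and picks one $E_j$ with $n_j\neq 0$; both hinge on the same inequality $\mathbf{1}_E\leq f$.
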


\begin{proof}$(1)\Leftrightarrow(2)$: Identical to the proof in Proposition~\ref{min0}, use the fact that since our space is now zero-dimensional and therefore every open set (more precisely $Y^{c}$ in the proof above) contains a clopen set $E$.

$(2)\Rightarrow(3)$: Let $0\neq f\in C(X;\mathbb{Z})^{+}$. Such an $f$ has the form $f=\sum_{j=1}^{m}n_{j}\mathbf{1}_{E_{j}}$ where the $n_{j}$ are non-negative integers, not all zero, and the $E_{j}$ are clopen sets. Pick a non-empty $E_{j}:=E$ with $n_{j}\neq 0$, there is one by our assumption on $f$. Assuming  $(2)$, find elements $t_{1},\dots, t_{n}$ such that $\bigcup_{j=1}^{n}t_{j}.E = X$. Now since the $\beta_{t_{j}}$ are order preserving and $\mathbf{1}_{E}\leq f$,
\[\mathbf{1}_{X}\leq \sum_{j=1}^{n}\mathbf{1}_{t_{j}.E}= \sum_{j=1}^{n}\beta_{t_{j}}(\mathbf{1}_{E})\leq \sum_{j=1}^{n}\beta_{t_{j}}(f).\]

$(3)\Rightarrow(2)$: Given a non-empty clopen set $E$, $f:=\mathbf{1}_{E}$ is a non-negative, non-zero, integer-valued continuous function. We then are granted group elements $t_{1},\dots, t_{n}$ in $\Gamma$ such that $\sum_{j=1}^{n}\beta_{t_{j}}(f)\geq \mathbf{1}_{X}$. Then
\[\mathbf{1}_{X}\leq \sum_{j=1}^{n}\beta_{t_{j}}(f)=\sum_{j=1}^{n}\beta_{t_{j}}(\mathbf{1}_{E})=\sum_{j=1}^{n}\mathbf{1}_{t_{j}.E},\]
which shows $\bigcup_{j=1}^{n}t_{j}.E = X$.

\end{proof}

Recall that when $X$ is the Cantor set, $K_{0}(C(X))$ is order isomorphic to $C(X;\mathbb{Z})$ via the dimension map $\dim:K_{0}(C(X))\rightarrow C(X;\mathbb{Z})$ given by $\dim([p]_0)(x)=\Tr(p(x))$. Here $p$ represents a projection over the matrices of $C(X)$; $M_n(C(X))\cong C(X;\mathbb{M}_{n})$, and $\Tr$ denotes the standard (non-normalized) trace on $\mathbb{M}_{n}$. Now given a continuous action $\Gamma\curvearrowright X$, let $\alpha:\Gamma\rightarrow\Aut(C(X))$ denote the associated action on the algebra $C(X)$, and write $\hat\alpha:\Gamma\rightarrow\OAut(K_{0}(C(X)))$ for the induced action on the ordered group $K_{0}(C(X))$. Moreover, as above, we have a natural action $\beta:\Gamma\rightarrow\OAut(C(X;\mathbb{Z}))$, given by $\beta_{s}(f)(x)=f(s^{-1}.x)$ for $s\in\Gamma$ and $f\in C(X;\mathbb{Z})$. One may inquire about the possible equivariance of $\hat\alpha$ and $\beta$ through the isomorphism $\dim$. Indeed, these actions are the same; we show that for each $s\in\Gamma$, the following diagram is commutative.

$$\begin{CD}
K_{0}(C(X)) @>\hat\alpha_{s}>>K_{0}(C(X)) \\
@VV\dim V  @VV\dim V\\
C(X;\mathbb{Z}) @>\beta_{s}>> C(X;\mathbb{Z})
\end{CD}$$
To see this, consider any projection $p\in\mathcal{P}_{\infty}(C(X))$, any $s\in\Gamma$ and any $x\in X$. We compute:

\begin{align*}
\beta_{s}\circ\dim([p]_0)(x)&=\dim([p]_0)(s.^{-1}x)=\Tr(p(s.^{-1}x))=\Tr(\alpha_{s}(p)(x))=\dim([\alpha_{s}(p)]_0)(x)\\&=\dim\circ\hat\alpha_{s}([p]_0)(x)
\end{align*}
which shows that $\beta_{s}\circ\dim([p]_0)=\dim\circ\hat\alpha_{s}([p]_0)$ as functions on $X$, and consequently that $\beta_{s}\circ\dim=\dim\circ\hat\alpha_{s}$ by uniqueness of the Grothendieck extension.\\

Condition $(3)$ in the above Propostion and this discussion motivate a suitable definition for \emph{minimal} actions at the $K$-theoretic level in the noncommutative case, at least for stably finite algebras where the $K_{0}$ group is ordered.

\begin{defn}Let $\Gamma$ be a discrete group, $A$ a unital, stably finite \cstar-algebra, and $\alpha:\Gamma\curvearrowright A$ an action with induced action $\hat\alpha$ on $K_{0}(A)$.
\begin{enumerate}
\item  We say that $\alpha$ is \emph{$K_{0}$-minimal} provided that for every $0\neq g\in K_{0}(A)^{+}$, there are $t_{1},\dots,t_{n}$ in $\Gamma$ such that $\sum_{j=1}^{n}\hat\alpha_{t_{j}}(g)\geq [1]_{0}$.
\item Fix an integer $n\in\mathbb{N}$. We say that $\alpha$ is \emph{$K_{0}$-$n$-minimal} provided that for every $0\neq g\in K_{0}(A)^{+}$, there are $t_{1},\dots,t_{n}$ in $\Gamma$ such that $\sum_{j=1}^{n}\hat\alpha_{t_{j}}(g)\geq [1]_{0}$.
\item Fix an integer $n\in\mathbb{N}$. We say that $\alpha$ is \emph{$K_{0}$-$n$-filling} provided that for all non-zero $g_1,\dots,g_n\in K_{0}(A)^{+}$, there are $t_{1},\dots,t_{n}$ in $\Gamma$ such that $\sum_{j=1}^{n}\hat\alpha_{t_{j}}(g_j)\geq [1]_{0}$.
\end{enumerate}
\end{defn}

There is a significant difference between $K_0$-minimal actions and $K_{0}$-$n$-minimal actions. Of course every $K_0$-$n$-minimal action is $K_0$-minimal, but the converse is far from true. We shall see that when $K_0(A)$ has suitable properties $K_0$-$n$-minimal actions along with proper outerness guarantee that the reduced crossed product is simple and purely infinite, whereas $K_0$-minimal actions along with proper outerness may generate simple stably finite crossed product algebras. 

Proposition~\ref{min1} and the remarks proceeding it imply that a Cantor system $(X,\Gamma)$ is minimal if and only if the algebra $C(X)$ is $\Gamma$-simple if and only if $\alpha$ is $K_{0}$-minimal, where $\alpha:\Gamma\curvearrowright C(X)$ is, of course, the induced action. With some work, we will show that for a stably finite algebra that admits sufficiently many projections, $K_{0}$-minimality and $\Gamma$-simplicity are equivalent notions. Due to the rigid structure of $K_0$, it turns out to be easier to work with the Cuntz semigroup $W(A)$. Also, when dealing with Cuntz comparability we need not make any restrictions on the underlying algebra. Here are the parallel definitions.

\begin{defn}Let $\Gamma$ be a discrete group, $A$ a unital \cstar-algebra, and $\alpha:\Gamma\curvearrowright A$ an action with induced action $\hat\alpha$ on the Cuntz semigroup $W(A)$.
\begin{enumerate}
\item  We say that $\alpha$ is \emph{$W$-minimal} provided that for every $0\neq g\in W(A)$, there are $t_{1},\dots,t_{n}$ in $\Gamma$ such that $\sum_{j=1}^{n}\hat\alpha_{t_{j}}(g)\geq \langle1\rangle$.
\item Fix an integer $n\in\mathbb{N}$. We say that $\alpha$ is \emph{$W$-$n$-minimal} provided that for every $0\neq g\in W(A)$, there are $t_{1},\dots,t_{n}$ in $\Gamma$ such that $\sum_{j=1}^{n}\hat\alpha_{t_{j}}(g)\geq \langle1\rangle$.
\item Fix an integer $n\in\mathbb{N}$. We say that $\alpha$ is \emph{$W$-$n$-filling} provided that for all non-zero $g_1,\dots,g_n\in W(A)$, there are $t_{1},\dots,t_{n}$ in $\Gamma$ such that $\sum_{j=1}^{n}\hat\alpha_{t_{j}}(g_j)\geq \langle 1\rangle$.
\end{enumerate}
\end{defn}

Using topological transitivity we show below (Proposition~\ref{nfilling=nminimal}) that $W$-$n$-minimal and $W$-$n$-filling actions coincide. But first, we justify our choice of nomenclature.

\begin{prop}\label{min2}Let $(A,\Gamma,\alpha)$ be a C*-dynamical system with induced action $\hat\alpha:\Gamma\curvearrowright W(A)$ on the Cuntz semigroup of $A$. Then $A$ is $\Gamma$-simple if and only if $\alpha$ is $W$-minimal.
\end{prop}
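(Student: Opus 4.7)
The plan is to prove the two implications directly, using two standard facts from the theory of Cuntz comparison: (i) for $a,b \in M_\infty(A)^+$, $a \precsim b$ implies $a$ lies in the closed two-sided ideal generated by $b$ (in the appropriate matrix algebra), and (ii) for a projection $p \in A$, $(p-\varepsilon)_+ \approx p$ whenever $0 < \varepsilon < 1$ (since they generate the same hereditary subalgebra).

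For the forward direction, assume $A$ is $\Gamma$-simple and let $0 \neq g = \langle a \rangle \in W(A)$, where $a \in M_k(A)^+$. Consider the ideal $I \subset A$ generated by the matrix entries of all translates $\{\alpha_s(a) : s \in \Gamma\}$ (equivalently, work in $M_k(A)$ with the amplified action, which is still $\Gamma$-simple). This ideal is nonzero and $\Gamma$-invariant, so $I = A$ and in particular $1_A \in I$. Hence for each $\varepsilon \in (0,1)$ one can write $1_A$ as a norm-limit of finite sums $\sum_{i} x_i \alpha_{s_i}(a) y_i$, and by a standard Cuntz-comparison argument (Rørdam's lemma combined with $\sum_i b_i \precsim \bigoplus_i b_i$) this yields
\[
(1_A - \varepsilon)_+ \;\precsim\; \bigoplus_{j=1}^{n} \alpha_{t_j}(a)
\]
for suitable $t_1,\ldots,t_n \in \Gamma$. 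Invoking fact (ii) gives $\langle 1 \rangle = \langle (1_A-\varepsilon)_+\rangle \leq \sum_{j=1}^{n} \hat\alpha_{t_j}(g)$, proving $W$-minimality.

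For the reverse direction, assume $\alpha$ is $W$-minimal and suppose $I \subsetneq A$ is a nonzero $\Gamma$-invariant ideal. Pick any $0 \neq a \in I^+$; then $g := \langle a \rangle \neq 0$ in $W(A)$, so $W$-minimality yields $t_1,\ldots,t_n \in \Gamma$ with
\[
\langle 1 \rangle \;\leq\; \sum_{j=1}^n \hat\alpha_{t_j}(g) \;=\; \Big\langle \bigoplus_{j=1}^n \alpha_{t_j}(a)\Big\rangle,
\]
i.e. $1_A \precsim \bigoplus_j \alpha_{t_j}(a)$ in $M_n(A)^+$. By $\Gamma$-invariance each $\alpha_{t_j}(a) \in I$, so $\bigoplus_j \alpha_{t_j}(a) \in M_n(I)$. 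Fact (i) then forces $1_A \in M_n(I) \cap A = I$, so $I = A$, a contradiction.

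I do not anticipate a genuine obstacle; both directions are essentially bookkeeping with Cuntz comparison. The only mild care needed is in the forward direction, where one must pass from the norm-approximation of $1_A$ inside the (algebraic) ideal generated by the orbit to a clean Cuntz inequality $(1_A-\varepsilon)_+ \precsim \bigoplus_j \alpha_{t_j}(a)$; this is handled by Rørdam's standard $\varepsilon$-cutoff lemma and the fact that $\sum_i b_i \precsim \bigoplus_i b_i$. Cutting $\varepsilon$ below $1$ turns the approximation into equality in $W(A)$ via fact (ii), which is the reason we do not need extra hypotheses (such as stable finiteness or real rank zero) on $A$.
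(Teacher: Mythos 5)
Your proof is correct and in substance follows the same route as the paper: your reverse direction ($W$-minimal $\Rightarrow$ $\Gamma$-simple) is essentially identical to the paper's (Cuntz subequivalence keeps $1_A\oplus 0_{n-1}$ inside the closed ideal $M_n(I)$, forcing $1_A\in I$), and in the forward direction both arguments run through the ideal generated by the orbit of $a$. The one genuine difference is how the inequality $1_A\precsim\bigoplus_j\alpha_{t_j}(a)$ is extracted: you approximate $1_A$ inside the closed ideal and then use R{\o}rdam's $\varepsilon$-cutoff lemma together with $(1_A-\varepsilon)_+\approx 1_A$, whereas the paper observes that, $A$ being unital, the \emph{algebraic} ideal generated by the orbit is already all of $M_n(A)$, so one gets an exact identity $\sum_j x_j\alpha_{t_j}^{(n)}(a)y_j^*=\tfrac12\,1$ and no cutoff is needed. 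One caution about your version: the approximants $\sum_i x_i\alpha_{s_i}(a)y_i$ are not positive, so R{\o}rdam's lemma and the fact $\sum_i b_i\precsim\bigoplus_i b_i$ do not apply to them directly; you must first convert them into positive combinations of the translates. This is precisely what the paper's substitution $z_j=x_j+y_j$ achieves (yielding $\sum_j z_j\alpha_{t_j}^{(n)}(a)z_j^*\geq 1$), and it is also the content of the standard lemma that a positive element $a$ of the closed ideal generated by a positive $b$ satisfies $(a-\varepsilon)_+\precsim b\oplus\cdots\oplus b$. With that step made explicit (or that lemma cited), your argument is complete; the paper's route simply trades the approximation and cutoff for an exact algebraic identity plus the positivity trick.
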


\begin{proof} Suppose the action is $W$-minimal and let $(0)\neq I\subset A$ be a $\Gamma$-invariant ideal. Take a nonzero $x$ in $I^{+}$ and find group elements $t_{1},\dots,t_{n}$ with $\sum_{j=1}^{n}\hat\alpha_{t_{j}}(\langle x\rangle)\geq \langle1\rangle$. This means
\[\langle\alpha_{t_{1}}(x)\oplus\dots\oplus\alpha_{t_{n}}(x)\rangle=\sum_{j=1}^{n}\langle\alpha_{t_{j}}(x)\rangle
=\sum_{j=1}^{n}\hat\alpha_{t_{j}}(\langle x\rangle)\geq \langle1\rangle\approx \langle1\oplus0_{n-1}\rangle. \]
This implies that $1\oplus0_{n-1}$ is Cuntz smaller than $\alpha_{t_{1}}(x)\oplus\dots\oplus\alpha_{t_{n}}(x)$ and there is a sequence $(y_{k})_{k\geq1}$ in $M_{n}(A)$ with $y_{k}^{*}(\alpha_{t_{1}}(x)\oplus\dots\oplus\alpha_{t_{n}}(x))y_{k}\rightarrow1\oplus0_{n-1}$. Now each $\alpha_{t_{j}}(x)$ belongs to $I$ so that $\alpha_{t_{1}}(x)\oplus\dots\oplus\alpha_{t_{n}}(x)$ belongs to $M_{n}(I)$, a (closed) ideal in $M_{n}(A)$. Furthermore, each $y_{k}^{*}(\alpha_{t_{1}}(x)\oplus\dots\oplus\alpha_{t_{n}}(x))y_{k}\in M_{n}(I)$ so that $1\oplus0_{n-1}$ lives in $M_{n}(I)$ ($M_{n}(I)$ is closed) which implies that $1\in I$ and $I=A$. The action is thus $\Gamma$-simple.

Conversely, assume $\alpha$ admits no non-trivial invariant ideals, and let $g=\langle a\rangle\in W(A)$, for some $a\in M_{n}(A)^{+}$. Since the algebraic ideal generated by $\{\alpha_{s}^{(n)}(a) : s\in\Gamma\}$ is all of $M_{n}(A)$, there are lists of elements $t_{1},\dots,t_{m}\in\Gamma$, and $x_{1},\dots,x_{m}; y_{1},\dots,y_{m}$ in $M_{n}(A)$ such that
\[\sum_{j=1}^{m}x_{j}\alpha_{t_{j}}^{(n)}(a)y_{j}^{*}=\frac{1}{2}\textbf{1}_{M_{n}(A)}.\]
Now set $z_{j}:=x_{j}+y_{j}$ and observe that
\begin{align*}\sum_{j=1}^{m}z_{j}\alpha_{t_{j}}^{(n)}(a)z_{j}^{*}&=\sum_{j=1}^{m}x_{j}\alpha_{t_{j}}^{(n)}(a)y_{j}^{*}+\sum_{j=1}^{m}y_{j}\alpha_{t_{j}}^{(n)}(a)x_{j}^{*}
+\sum_{j=1}^{m}x_{j}\alpha_{t_{j}}^{(n)}(a)x_{j}^{*}+\sum_{j=1}^{m}y_{j}\alpha_{t_{j}}^{(n)}(a)y_{j}^{*}\\
&\geq \sum_{j=1}^{m}x_{j}\alpha_{t_{j}}^{(n)}(a)y_{j}^{*}+\big(\sum_{j=1}^{m}x_{j}\alpha_{t_{j}}^{(n)}(a)y_{j}^{*}\big)^{*}=\textbf{1}_{M_{n}(A)}\geq 1_{A}\oplus0_{m-1},
\end{align*}
the first inequality following from the fact that the last two sums on the first line are positive. A simple Cuntz comparison now gives
\begin{align*}1_{A}\approx1_{A}\oplus0_{m-1}&\precsim\sum_{j=1}^{m}z_{j}\alpha_{t_{j}}^{(n)}(a)z_{j}^{*}
=(z_{1},\dots,z_{m})(\alpha_{t_{1}}(a)\oplus\dots\oplus\alpha_{t_{m}}(a))(z_{1},\dots,z_{m})^{*}\\
&\precsim\alpha_{t_{1}}(a)\oplus\dots\oplus\alpha_{t_{m}}(a).
\end{align*}
Therefore, in the ordering on $W(A)$,
\[\langle1\rangle\leq \langle\alpha_{t_{1}}(a)\oplus\dots\oplus\alpha_{t_{m}}(a)\rangle=\sum_{j=1}^{m}\langle\alpha_{t_{j}}(a)\rangle
=\sum_{j=1}^{m}\hat\alpha_{t_{j}}(\langle a\rangle)\]
which gives the $W$-minimality of the action.
\end{proof}

It is well known that if a \cstar algebra $A$ is unital and stably finite, $(K_{0}(A),K_{0}(A)^{+},[1]_0)$ is a well ordered abelian group with order unit $u=[1]_0$, and so the above definition of $K_{0}$-minimality applies. With the added assumption of sufficiently many projections, all the notions of minimality mentioned above will coincide as the next result shows. Recall that a subgroup $H$ of an abelian ordered group $(G,G^{+})$ is said to be an \emph{order ideal} provided that its positive cone is spanning and hereditary, that is, $H=H^{+}-H^{+}$ and $0\leq g\leq h\in H^{+}$ implies $g\in H$, where by definition $H^{+}=H\cap G^{+}$. In the context of an action $\beta:\Gamma\rightarrow\OAut(G)$, a subset $H\subset G$ is called $\Gamma$-invariant if for every $t\in\Gamma$, $\beta_{t}(H)\subset H$.

\begin{thm}\label{min3} Let $A$ be a unital, stably finite C*-algebra with the property that every ideal in $A$ admits a non-trivial projection. Consider an action $\alpha:\Gamma\rightarrow\Aut(A)$ with induced action $\hat\alpha:\Gamma\rightarrow\OAut(K_{0}(A))$. The following are equivalent:
\begin{enumerate}
\item $A$ is $\Gamma$-simple.
\item $\alpha$ is $W$-minimal.
\item $\alpha$ is $K_{0}$-minimal.
\item There are no non-trivial $\Gamma$-invariant order ideals $H\subset K_{0}(A)$.
\end{enumerate}
\end{thm}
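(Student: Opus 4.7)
The plan is to close the cycle $(1) \Leftrightarrow (2) \Rightarrow (3) \Rightarrow (4) \Rightarrow (1)$. The equivalence $(1) \Leftrightarrow (2)$ is precisely Proposition~\ref{min2}. For $(2) \Rightarrow (3)$, given $0 \neq g \in K_0(A)^+$, stable finiteness makes the canonical map $V(A) \hookrightarrow K_0(A)$ injective with image $K_0(A)^+$, so $g = [p]_0$ for some non-zero $p \in \mathcal{P}_\infty(A)$; $W$-minimality applied to $\langle p \rangle$ supplies $t_1, \ldots, t_n \in \Gamma$ with $1_A \precsim \alpha_{t_1}(p) \oplus \cdots \oplus \alpha_{t_n}(p)$ in the Cuntz sense, and since Cuntz subequivalence for projections agrees with Murray-von Neumann subequivalence (as recalled in the preliminaries), this translates to $[1_A]_0 \leq \sum_j \hat\alpha_{t_j}(g)$ in $K_0(A)^+$. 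For $(3) \Rightarrow (4)$, suppose $H \subset K_0(A)$ is a non-trivial $\Gamma$-invariant order ideal and pick $0 \neq g \in H^+$. $K_0$-minimality yields $\sum_j \hat\alpha_{t_j}(g) \geq [1_A]_0$; $\Gamma$-invariance and the subgroup property place this sum in $H^+$, hereditariness then forces $[1_A]_0 \in H$, and since $[1_A]_0$ is an order unit this gives $H = K_0(A)$, a contradiction.

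The heart of the theorem is $(4) \Rightarrow (1)$. Given a non-trivial $\Gamma$-invariant ideal $I \subsetneq A$, I would build the candidate
$$H := \{[p]_0 - [q]_0 \in K_0(A) : p, q \in \mathcal{P}_\infty(I)\}.$$
It is a subgroup via direct-sum combinations of projections in $\mathcal{P}_\infty(I)$, and $\Gamma$-invariance follows because $\alpha_s^{(n)}(M_n(I)) = M_n(I)$, hence $\alpha_s^{(n)}(\mathcal{P}_\infty(I)) = \mathcal{P}_\infty(I)$. The projection hypothesis on ideals supplies a non-zero projection $q_0 \in I$, and stable finiteness gives $[q_0]_0 \neq 0$ in $K_0(A)$, so $H$ is non-trivial. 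For the order-ideal properties, the subprojection characterization of the ordering on $V(A)$ shows that $H^+ = H \cap K_0(A)^+$ equals $\{[p]_0 : p \in \mathcal{P}_\infty(I)\}$ (if $[p]_0 - [q]_0 \geq 0$, MvN-subequivalence produces $q \sim q' \leq p$ and then $[p]_0 - [q]_0 = [p - q']_0$), and hereditariness follows by the same idea: $0 \leq [r]_0 \leq [p]_0$ with $p \in \mathcal{P}_\infty(I)$ gives $r \sim q'' \leq p$, and hereditariness of $M_n(I) \subset M_n(A)$ places $q'' \in \mathcal{P}_\infty(I)$, whence $[r]_0 = [q'']_0 \in H^+$.

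The main obstacle is ruling out $H = K_0(A)$, equivalently showing $[1_A]_0 \notin H$. If $[1_A]_0 = [p]_0 - [q]_0$ with $p, q \in \mathcal{P}_\infty(I)$, then $[1_A]_0 \leq [p]_0$ in $K_0(A)^+$, and via the $V(A)$-level translation $1_A$ is Murray-von Neumann subequivalent to $p$: there is $v \in M_n(A)$ with $v^*v = 1_A$ and $vv^* \leq p$. Then $vv^*$ lies in $M_n(I)$ by hereditariness, yet $vv^* \sim 1_A$ forces $vv^*$ and $1_A$ to generate the same two-sided ideal of $M_n(A)$, namely all of $M_n(A)$. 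Since that ideal is contained in $M_n(I)$, we conclude $M_n(I) = M_n(A)$, hence $I = A$, contradicting $I \subsetneq A$. This produces the promised non-trivial proper $\Gamma$-invariant order ideal, contradicting (4) and closing the cycle.
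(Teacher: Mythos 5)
Your steps $(1)\Leftrightarrow(2)$, $(2)\Rightarrow(3)$ and $(3)\Rightarrow(4)$ match the paper (one side remark: stable finiteness does \emph{not} make $V(A)\rightarrow K_0(A)$ injective --- that is cancellation, which is strictly stronger --- but you never actually use injectivity there, only that every non-zero element of $K_0(A)^+$ is $[p]_0$ for a non-zero projection $p$, so that step is fine). The genuine gap is in your $(4)\Rightarrow(1)$. At three points --- identifying $H^+$ with $\{[p]_0 : p\in\mathcal{P}_\infty(I)\}$, proving hereditariness, and above all ruling out $[1_A]_0\in H$ --- you pass from an inequality $[q]_0\leq[p]_0$ in $K_0(A)$ to a Murray--von Neumann subequivalence $q\sim q'\leq p$. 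That implication is exactly cancellation, which is not among the hypotheses of the theorem (unital, stably finite, ideals contain non-trivial projections); stable finiteness alone does not give it, and comparison of projections by $K_0$ is known to fail in stably finite algebras without cancellation. Without it, $[1_A]_0\leq[p]_0$ with $p\in M_n(I)$ only yields $1_A\oplus 1_k\precsim p\oplus 1_k$ for some $k$, and the obvious attempts to derive a contradiction fail: ideal membership gives nothing because of the $1_k$ summand, and passing to $A/I$ only shows that $A/I$ contains an infinite projection in some matrix algebra, which is not excluded since stable finiteness does not pass to quotients. So as written the crucial claim $[1_A]_0\notin H$ (and indeed that $H$ is hereditary) is unproved at the stated level of generality; your argument does work verbatim if one adds cancellation (e.g.\ stable rank one) to the hypotheses.

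It is worth seeing how the paper sidesteps this. Instead of manufacturing an order ideal from an algebra ideal $I$ (your $(4)\Rightarrow(1)$), the paper proves $(4)\Rightarrow(3)$ purely order-theoretically: given $0\neq x\in K_0(A)^+$ it forms $L=\{y : 0\leq y\leq\sum_{j=1}^{n}\hat\alpha_{t_j}(x)\ \text{for some}\ t_1,\dots,t_n\}$ and $H=L-L$, checks directly that $H$ is a non-trivial invariant order ideal, and concludes $[1]_0\in L$; no translation back from $K_0$-inequalities to subprojections is ever needed. The return to $(1)$ is then done from $(3)$ via the Cuntz-comparison argument of Proposition~\ref{min2}, where the contradiction comes from norm-closedness of $M_n(I)$ rather than from $K$-theoretic cancellation. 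If you want to keep your architecture, either add cancellation to the hypotheses, or replace your order ideal by a construction (like the paper's $L-L$) whose hereditariness is built in by definition.
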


\begin{proof} The equivalence of (1) and (2) was shown in Propostion~\ref{min2}.

$(2)\Rightarrow(3)$: Let $0\neq x\in K_{0}(A)^{+}$, then $x=[p]_{0}$ for some non-zero $p\in\mathcal{P}_{m}(A)$.  By hypothesis there are group elements $t_{1},\dots, t_{n}$ such that
\[\langle\alpha_{t_{1}}(p)\oplus\dots\oplus\alpha_{t_{n}}(p)\rangle=\sum_{j=1}^{n}\langle\alpha_{t_{j}}(p)\rangle
=\sum_{j=1}^{n}\hat\alpha_{t_{j}}(\langle p\rangle)\geq \langle 1\rangle.\]
By definition $1\precsim r:=\alpha_{t_{1}}(p)\oplus\dots\oplus\alpha_{t_{n}}(p)$ and so $1\sim q\leq r$ where $q$ is a subprojection of $r$ in $M_{mn}(A)$. Since $r-q\perp q$, a small computation will give the desired inequality, indeed:
\begin{align*}[1]_{0}\leq [1]_0+[r-q]_0&=[q]_0+[r-q]_0=[r-q+q]_0=[r]_0=[\alpha_{t_{1}}(p)\oplus\dots\oplus\alpha_{t_{n}}(p)]_0\\
&=\sum_{j=1}^{n}\hat\alpha_{t_{j}}([p]_0)=\sum_{j=1}^{n}\hat\alpha_{t_{j}}(x).
\end{align*}

$(3)\Rightarrow(1)$: Suppose $(0)\neq I\subset A$ is a $\Gamma$-invariant ideal. By our assumption on $A$, we can find a nonzero projection $p\in I$. Now find group elements $t_{1},\dots, t_{n}$ such that
\[[\alpha_{t_{1}}(p)\oplus\dots\oplus\alpha_{t_{n}}(p)]_0=\sum_{j=1}^{n}[\alpha_{t_{j}}(p)]_0
=\sum_{j=1}^{n}\hat\alpha_{t_{j}}([p]_0)\geq [1]_0.\]
Apply the order embedding $V(A)\hookrightarrow W(A)$ which gives $\langle\alpha_{t_{1}}(p)\oplus\dots\oplus\alpha_{t_{n}}(p)\rangle\geq\langle 1\rangle$ so that
\[1\oplus 0_{n-1}\approx 1\precsim \alpha_{t_{1}}(p)\oplus\dots\oplus\alpha_{t_{n}}(p).\]
Now follow the exact reasoning as Propostion~\ref{min2} to deduce that $1\in I$ and $I=A$.

$(3)\Rightarrow(4)$: Suppose $(0)\neq H\subset K_{0}(A)$ is a $\Gamma$-invariant order ideal. Since $H^{+}$ is spanning, we can locate a non-zero $x$ in $H^{+}:=H\cap K_{0}(A)^{+}$. By (3) there are $t_{1},\dots,t_{n}\in\Gamma$ with $\sum_{j=1}^{n}\hat\alpha_{t_{j}}(x)\geq [1]_0\geq 0$. Each $\hat\alpha_{t_{j}}(x)$ is in $H$ so $\sum_{j=1}^{n}\hat\alpha_{t_{j}}(x)\in H$, and $H$ being hereditary implies that $[1]_0$ is in $H$. Now given any $z\in K_{0}(A)^{+}$, there is an $n\in\mathbb{Z}^{+}$ such that $0\leq z\leq n[1]_0$. Using again the fact that $H$ is hereditary we have $z\in H$, thus $K_{0}(A)^{+}\subset H$ whence $K_{0}(A)=H$.

$(4)\Rightarrow(3)$: Let $0\neq x\in K_{0}(A)^{+}$. Consider the set
\[L:=\bigg\{y\in K_{0}(A) : \exists n\in\mathbb{Z}^{+}, \exists t_{1},\dots,t_{n}\in\Gamma\quad\mbox{such that}\quad 0\leq y\leq\sum_{j=1}^{n}\hat\alpha_{t_{j}}(x)\bigg\}.\]
Two facts are fairly clear about $L\subset K_{0}(A)^{+}$: $L+L\subset L$ and $L$ is hereditary, that is, if $z\in K_{0}(A)$ and $y\in L$ with $0\leq z\leq y$ then $z\in L$. It is natural to then define the subgroup $H=L-L$. We show that $H$ is in fact a non-zero $\Gamma$-invariant order ideal. To that end set $H^{+}=H\cap K_{0}(A)^{+}$ and note that $L\subset H^{+}$. Then
\[H=L-L\subset H^{+}-H^{+}\subset H,\]
so $H=H^{+}-H^{+}$. Also, if $z\in K_{0}(A)$ with $0\leq z\leq y-y'\in H$, with $y,y'\in L$, then since $y-y'\leq y$ and $L$ is hereditary, we have $z\in L\subset H$ so $H$ is hereditary as well. $H\neq(0)$ since $x\in H$. Finally, if $y\in L$ and $t\in\Gamma$, then $0\leq y\leq \sum_{j=1}^{n}\hat\alpha_{t_{j}}(x)$ for certain group elements $t_{1},\dots, t_{n}$. Applying the order isomorphism $\hat\alpha_{t}$ we get
\[0\leq\hat\alpha_{t}(y)\leq\hat\alpha_{t}\big(\sum_{j=1}^{n}\hat\alpha_{t_{j}}(x)\big)=\sum_{j=1}^{n}\hat\alpha_{tt_{j}}(x),\]
which implies that $\hat\alpha_{t}(y)\in L$ and $\hat\alpha_{t}(L)\subset L$. So $\hat\alpha_{t}(H)=\hat\alpha_{t}(L)-\hat\alpha_{t}(L)\subset L-L=H$ which is what we wanted. By our hypothesis, $H=K_{0}(A)$, so that $[1]_0\in H$. Writing $[1]_0=y-y'\leq y$ for some $y,y'$ in $L$ and recalling that $L$ is hereditary ensures $[1]_0\in L$, which means that there are group elements $t_{1},\dots,t_{n}$ with $\sum_{j=1}^{n}\hat\alpha_{t_{j}}(x)\geq [1]_0$, and $\alpha$ is thus $K_{0}$-minimal.

\end{proof}

\subsection{K-Theoretic Topological Transitivity}

We now aim to develop a notion of topological transitivity in the noncommutative setting, and we do this using $K$-theory. An action $\Gamma\curvearrowright X$ of a group on a locally compact Hausdorff space is termed \emph{topologically transitive} if for every pair $U,V$ of non-empty open subsets of $X$, there is a group element $s\in\Gamma$ with $s.U\cap V\neq\emptyset$. When $X$ is compact, it is routine to check that every minimal action is topologically transitive (see Proposition~\ref{min0} above) but the converse is false in general as witnessed by the translation action $\mathbb{Z}\curvearrowright\mathbb{Z}_{\infty}$ on the one-point compactification of the integers with the point $\infty$ being fixed. An action $\Gamma\curvearrowright X$ is said to have the \emph{intersection property} if each non-zero ideal of $C_{0}(X)\rtimes_{\lambda}\Gamma$ has non-zero intersection with $C_{0}(X)$. As minimality of an action is linked with simplicity of the crossed product, topological transitivity is associated with primitivity. The following is an abbreviated form of Proposition 2.8 of~\cite{MR}. 

\begin{prop}Consider an continuous action of a discrete group on a locally compact Hausdorff space $X$. If $C_{0}(X)\rtimes_{\lambda}\Gamma$ is prime, then the action is topologically transitive. Conversely, if the action is topologically transitive and has the intersection property, then $C_{0}(X)\rtimes_{\lambda}\Gamma$ is prime. 
\end{prop}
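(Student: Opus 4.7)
The plan is to prove both implications by contrapositive, exploiting the standard correspondence between $\Gamma$-invariant open sets $W \subset X$, invariant ideals $C_{0}(W) \subset C_{0}(X)$, and the induced ideals $C_{0}(W) \rtimes_{\lambda}\Gamma \subset C_{0}(X)\rtimes_{\lambda}\Gamma$. Recall that a C*-algebra is prime iff any two non-zero closed two-sided ideals $I_1, I_2$ satisfy $I_1 I_2 \neq 0$ (equivalently $I_1 \cap I_2 \neq 0$), so I will produce or rule out pairs of such ideals.

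For the first implication, assume the action fails to be topologically transitive, so there exist non-empty open $U, V \subset X$ with $sU \cap V = \emptyset$ for every $s \in \Gamma$. Form the $\Gamma$-saturations $W_1 := \bigcup_{s \in \Gamma} sU$ and $W_2 := \bigcup_{s \in \Gamma} sV$, which are non-empty $\Gamma$-invariant open sets; they are disjoint because $x \in sU \cap tV$ would force $t^{-1}x \in (t^{-1}s)U \cap V$, contradicting the choice of $U,V$. Setting $J_k := C_{0}(W_k)$ and $I_k := J_k \rtimes_{\lambda}\Gamma$ yields non-zero ideals of $C_{0}(X)\rtimes_{\lambda}\Gamma$. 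Using the formula $(au_s)(bu_t) = a\alpha_s(b)u_{st}$ together with $\alpha_s(J_2) = J_2$, one checks that $a\alpha_s(b) \in J_1 J_2 = C_{0}(W_1 \cap W_2) = 0$ for $a \in J_1$, $b \in J_2$; density of the algebraic crossed product and joint continuity of multiplication then give $I_1 I_2 = 0$, contradicting primeness.

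For the converse, assume topological transitivity and the intersection property, and let $I_1, I_2$ be non-zero ideals of $C_{0}(X)\rtimes_{\lambda}\Gamma$. The intersection property provides non-zero $J_k := I_k \cap C_{0}(X)$, and these are $\Gamma$-invariant since $\alpha_s(a) = u_s a u_s^{*} \in I_k \cap C_{0}(X) = J_k$ for $a \in J_k$. Write $J_k = C_{0}(W_k)$ for non-empty $\Gamma$-invariant open sets $W_k \subset X$. Topological transitivity applied to $W_1, W_2$ produces some $s \in \Gamma$ with $sW_1 \cap W_2 \neq \emptyset$; but $sW_1 = W_1$ by invariance, so $W_1 \cap W_2$ is a non-empty open set. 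Hence $0 \neq J_1 J_2 = C_{0}(W_1 \cap W_2) \subset I_1 \cap I_2$, and the crossed product is prime.

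The only technical point of substance is justifying that for each $\Gamma$-invariant ideal $J \subset C_{0}(X)$ the reduced crossed product $J \rtimes_{\lambda}\Gamma$ sits canonically as an ideal of $C_{0}(X)\rtimes_{\lambda}\Gamma$ and that products of such ideals can be computed via $J_1 J_2 \subset C_{0}(X)$ as used above. This is standard, but warrants attention because reduced crossed products are not functorial under arbitrary $*$-homomorphisms; the fact can be seen by realizing both algebras on $\mathcal{H} \otimes \ell^{2}(\Gamma)$ for a common faithful representation of $C_{0}(X)$ and verifying the ideal property on the dense $*$-subalgebra $C_c(\Gamma, J) \subset C_c(\Gamma, C_{0}(X))$.
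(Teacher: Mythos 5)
Your argument is correct. Note, however, that the paper does not actually prove this proposition: it quotes it as an abbreviated form of Proposition 2.8 of Matui--R{\o}rdam, and the route the paper itself takes to this circle of ideas is the noncommutative generalization (Theorem~\ref{ToptransPrime}), proved via Cuntz subequivalence in $W(A)$ --- approximating elements of ideals by $v_k^*av_k$, pushing matrix entries into $I\cap A$ and $J\cap A$, and in the other direction extracting a nonzero $b^{1/2}cu_{s^{-1}}a^{1/2}$ from primeness. Your proof is the purely commutative one: you pass to $\Gamma$-saturations $W_1,W_2$ of the witnesses of non-transitivity to get disjoint invariant open sets, hence orthogonal ideals $C_0(W_k)\rtimes_\lambda\Gamma$, and conversely you use the intersection property to cut nonzero invariant ideals $I_k\cap C_0(X)=C_0(W_k)$ out of given ideals and apply transitivity plus invariance to force $W_1\cap W_2\neq\emptyset$. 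This is more elementary (no Cuntz comparison, only the standard identification of invariant open sets with invariant ideals and the fact that $J\rtimes_\lambda\Gamma$ sits as an ideal, a point you rightly flag and which is also used elsewhere in the paper), but it is genuinely tied to the commutative setting; the paper's $W(A)$-argument is what allows the statement to survive in the noncommutative case, where there is no spectrum to saturate. Both the contrapositive bookkeeping and the multiplier-algebra justification that $u_s(I_k\cap C_0(X))u_s^*\subset I_k\cap C_0(X)$ are handled correctly, so there is no gap.
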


After we develop a notion of topological transitivity in the noncommutative setting we will establish a more general result (see Theorem~\ref{ToptransPrime}).

\begin{defn}\label{TopTransDefn} Let $(A,\Gamma,\alpha)$ be a \cstar-system. Call an action $\alpha$ \emph{topologically transitive} if for every pair of non-zero $x,y\in W(A)$, there is group element $t\in\Gamma$ and a non-zero $z\in W(A)$ with $z\leq x$ and $\hat\alpha_{t}(x)\leq y$.
\end{defn}

The following Proposition shows that this definition is consistent with the established notion of topological transitivity in the commutative setting. Recall that for $f,g\in M_{\infty}(C(X))^+$, we have $f\precsim g$ if and only if $\supp(f)\subset\supp(g)$, where $\supp(\cdot)$ denotes the support.

\begin{prop} Let $X$ be a locally compact space, and let $k:\Gamma\curvearrowright X$ be a continuous action with induced action $\alpha:\Gamma\rightarrow\Aut(C_0(X))$. Then $k$ is topologically transitive if and only if $\alpha$ is topologically transitive.
\end{prop}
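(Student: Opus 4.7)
My plan is to translate both directions of the claimed equivalence into support-theoretic statements about open subsets of $X$. Three ingredients drive the translation: (i) the characterization $f\precsim g \iff \supp(f)\subset\supp(g)$ in $M_\infty(C_0(X))^+$ recorded just before the proposition, (ii) the identity $\supp(\alpha_t(f))=t.\supp(f)$, which is immediate from $\alpha_t(f)(x)=f(t^{-1}.x)$, and (iii) the fact that in a locally compact Hausdorff space every non-empty open set supports a non-zero $h\in C_0(X)^+$, obtained from local compactness together with Urysohn's lemma. Throughout, I read the defining inequality as $\hat\alpha_t(z)\leq y$ (with the auxiliary $z\leq x$), which is the natural condition making the non-zero piece $z$ play a role.

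For the implication $\alpha$ topologically transitive $\Rightarrow$ $k$ topologically transitive, I would fix non-empty open sets $U,V\subset X$ and use (iii) to produce non-zero $f,g\in C_0(X)^+$ with $\supp(f)\subset U$ and $\supp(g)\subset V$. Applying topological transitivity of $\alpha$ to $x=\langle f\rangle$ and $y=\langle g\rangle$ yields $t\in\Gamma$ and a non-zero $z=\langle h\rangle\leq x$ with $\hat\alpha_t(z)\leq y$. By (i) and (ii), $\emptyset\neq\supp(h)\subset U$ and $t.\supp(h)=\supp(\alpha_t(h))\subset V$, so $t.U\cap V$ contains the non-empty set $t.\supp(h)$.

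For the converse, given non-zero $x,y\in W(C_0(X))$, I would represent them as $x=\langle f\rangle$, $y=\langle g\rangle$ for non-zero $f,g\in M_\infty(C_0(X))^+$, and set $U:=\supp(f)$, $V:=\supp(g)$, both non-empty open subsets of $X$. Topological transitivity of $k$ supplies $t\in\Gamma$ with $t.U\cap V\neq\emptyset$. Choosing $y_0$ in this intersection, the open set $W:=U\cap t^{-1}.V$ is a non-empty neighborhood of $t^{-1}.y_0$ satisfying $W\subset U$ and $t.W\subset V$. Using (iii) I then pick a non-zero $h\in C_0(X)^+$ with $\supp(h)\subset W$, and $z:=\langle h\rangle$ witnesses topological transitivity of $\alpha$ via (i) and (ii).

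The proof is essentially a direct translation rather than a substantive argument; the only mild subtleties are the appeal to local compactness in the non-compact case to ensure (iii), and the fact that supports of matrix-valued elements of $M_\infty(C_0(X))^+$ must be interpreted pointwise as $\{x : f(x)\neq 0\}$ so that (i) and (ii) apply verbatim at the Cuntz-semigroup level rather than merely in $C_0(X)^+$.
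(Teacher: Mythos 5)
Your proof is correct and follows essentially the same route as the paper's: both directions are handled by the support characterization of Cuntz subequivalence in $M_\infty(C_0(X))^+$, equivariance of supports under $\alpha_t$, and Urysohn-type bump functions supported in a prescribed open set (your choice of $W=U\cap t^{-1}.V$ versus the paper's $Y\subset s.U\cap V$ with $t=s^{-1}$ is only a relabeling). Your reading of the definition as requiring $\hat\alpha_t(z)\leq y$ also matches how the paper itself uses it.
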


\begin{proof}
Assume that that $k$ is topologically transitive, and let $x=\langle g\rangle$, $y=\langle f\rangle$ be non-zero elements in $W(C_0(X))$. Since $f$ and $g$ are continuous matrix valued functions on $X$, $U=\{x|f(x)\neq0\}$ and $V=\{x|g(x)\neq0\}$ are open and non-empty. Therefore, there is a $s\in\Gamma$ such that $s.U\cap V\neq\emptyset$. Consider any non-empty open subset $Y\subset s.U\cap V$ and find a non-zero continuous function $h:X\rightarrow[0,1]$ with $\supp(h)\subset Y$. Since $\supp(h)\subset V\subset\supp(g)$ we have that $h\precsim g$ whence $0\neq z:=\langle h\rangle\leq\langle g\rangle=x$. Also,
\[\supp(s^{-1}.h)=s^{-1}.\supp(h)\subset s^{-1}.Y\subset s^{-1}.(s.U)=U\subset\supp(f),\]
thus $s^{-1}.h\precsim f$ which gives $\hat\alpha_{s^{-1}}(z)=\langle s^{-1}.h\rangle\leq\langle f\rangle=y$.

Conversely, now suppose $\alpha:\Gamma\curvearrowright C_0(X)$ is topologically transitive and consider a pair $U,V$ of non-empty open subsets of $X$. Find continuous non-zero mappings $f,g:X\rightarrow[0,1]$ with $\supp(f)\subset U$ and $\supp(g)\subset V$. There is then a non-zero $z\in W(C_0(X))$ and $t\in\Gamma$ with $z\leq\langle f\rangle$ and $\hat\alpha_{t}(z)\leq\langle g\rangle$. Say $z=\langle h\rangle$ for some continuous $h\in C_0(X,\mathbb{M}_{n}^{+})$. Then $\supp{h}\subset\supp(f)\subset U$ and $t.\supp(h)=\supp(t.h)\subset\supp(g)\subset V$. Now set $Y:=\{x | h(x)\neq0\}$, a non-empty open set and observe that $\emptyset\neq Y\subset\supp(h)\subset U\cap t^{-1}.V$.
\end{proof}

As in the commutative case, every minimal action is topologically transitive.

\begin{prop} Let $A$ be a unital C*-algebra. If $\alpha:\Gamma\rightarrow\Aut(A)$ is a minimal action, then it is topologically transitive.
\end{prop}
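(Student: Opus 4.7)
The plan is to invoke Proposition~\ref{min2} to recast minimality as $W$-minimality, then extract the required $t\in\Gamma$ and $z\in W(A)$ by a compression argument. Given non-zero $x,y\in W(A)$, represent $x=\langle a\rangle$ and $y=\langle b\rangle$ with $a,b\in M_N(A)^+$ non-zero (enlarging $N$ if needed). Since the amplified action $\alpha^{(N)}$ on $M_N(A)$ is again $\Gamma$-simple, Proposition~\ref{min2} makes it $W$-minimal there as well. Applied to the non-zero class $x$, this yields $t_1,\dots,t_n\in\Gamma$ with $1_{M_N(A)}\precsim\bigoplus_{i=1}^{n}\alpha_{t_i}(a)$ in $W(A)$. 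Unpacking the definition of Cuntz subequivalence and blocking the witnessing vectors produces elements $v_l^{(i)}\in M_N(A)$ satisfying
\[
\sum_{i=1}^{n}(v_l^{(i)})^*\,\alpha_{t_i}(a)\,v_l^{(i)} \;\longrightarrow\; 1_{M_N(A)} \quad \text{as } l\to\infty.
\]

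Next I compress this approximate identity by $b^{1/2}$ on both sides. Setting $w_l^{(i)}:=v_l^{(i)}b^{1/2}\in M_N(A)$, the sums $\sum_i(w_l^{(i)})^*\alpha_{t_i}(a)w_l^{(i)}$ converge to $b^{1/2}\cdot 1_{M_N(A)}\cdot b^{1/2}=b\neq 0$. By positivity of the summands, some individual term $c:=(w_l^{(i)})^*\alpha_{t_i}(a)w_l^{(i)}$ must be non-zero for some pair $l,i$; fix these, and set $t:=t_i$. This element $c$ then carries two useful factorizations. Writing $c=b^{1/2}\bigl((v_l^{(i)})^*\alpha_t(a)v_l^{(i)}\bigr)b^{1/2}$ places $c$ inside $\overline{b^{1/2}M_N(A)b^{1/2}}=\overline{bM_N(A)b}$, the hereditary subalgebra generated by $b$, so $c\precsim b$. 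Writing $c=(\alpha_t(a)^{1/2}w_l^{(i)})^*(\alpha_t(a)^{1/2}w_l^{(i)})$ and passing to the Cuntz-equivalent element $\alpha_t(a)^{1/2}w_l^{(i)}(w_l^{(i)})^*\alpha_t(a)^{1/2}$ puts us inside the hereditary subalgebra of $\alpha_t(a)$, so $c\precsim\alpha_t(a)$.

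Setting $z:=\hat\alpha_{t^{-1}}(\langle c\rangle)$, we have $z\neq 0$; the inequality $\langle c\rangle\leq\hat\alpha_t(x)$ gives $z\leq x$; and $\hat\alpha_t(z)=\langle c\rangle\leq y$. This verifies topological transitivity. The principal obstacle is the matrix-algebraic bookkeeping: one must apply $W$-minimality at the amplified level so that the order unit $1_{M_N(A)}$ can absorb $b$ via $b=b^{1/2}\cdot 1\cdot b^{1/2}$, and invoke the standard identity $\overline{bM_N(A)b}=\overline{b^{1/2}M_N(A)b^{1/2}}$ to locate the compressed summand in the hereditary subalgebra of $b$. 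With these tools in hand, the remainder is routine Cuntz comparison.
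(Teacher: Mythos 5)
Your proof is correct and is essentially the paper's own argument: both use (W-)minimality to get $\langle 1\rangle\leq\sum_{j}\hat\alpha_{t_j}(\langle a\rangle)$, compress the resulting approximate/exact identity by $b^{1/2}$, select a non-zero positive summand $c$, and observe that $c\precsim b$ while $\alpha_{t_i^{-1}}(c)\precsim a$, which yields the required $z$ and $t$. The only deviations are cosmetic: you keep the representatives in $M_N(A)$ and apply Proposition~\ref{min2} at the amplified level (where the paper instead reduces without comment to $a,b\in A^{+}$), and you argue directly with the approximating sequence instead of first exactifying via the $(1-\varepsilon)_{+}$ trick as the paper does.
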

\begin{proof}
Let $x,y\in W(A)$ be non-zero, without loss of generality we may assume $x=\langle a\rangle$ and $y=\langle b\rangle$ with $a,b\in A^+$. By minimality there are group elements $t_1,\dots, t_n\in\Gamma$ with
\[\langle\alpha_{t_1}(a)\oplus\cdots\oplus\alpha_{t_n}(a)\rangle=\sum_{j=1}^{n}\langle\alpha_{t_j}(a)\rangle
=\sum_{j=1}^{n}\hat\alpha_{t_j}(\langle a\rangle)=\sum_{j=1}^{n}\hat\alpha_{t_j}(x)\geq\langle1\rangle.\]
There is a sequence $(v_k)_{k\geq1}$ in $M_{n\times1}(A)$ with $v_{k}^*(\alpha_{t_1}(a)\oplus\cdots\oplus\alpha_{t_n}(a))v_k\rightarrow1$ in $A$ as $k\rightarrow\infty$. For each $k$ write $v_k=(v_{k,1},\dots,v_{k,n})^T$ so that
\[\bigg(\sum_{j=1}^{n}v_{k,j}^*\alpha_{t_{j}}(a)v_{k,j}\bigg)_{k\geq1}\longrightarrow1,\quad\mbox{as}\quad k\rightarrow\infty.\]
With $k$ large enough we have $\big\|1-\sum_{j=1}^{n}v_{k,j}^*\alpha_{t_{j}}(a)v_{k,j}\big\|<1/2$. There is a $y\in A$ with
\[(1_{A}-1/2)_+=y^*\bigg(\sum_{j=1}^{n}v_{k,j}^*\alpha_{t_{j}}(a)v_{k,j}\bigg)y,\]
which gives $1_A=\sum_{j=1}^{n}u_j^*\alpha_{t_{j}}(a)u_j$ where $u_j=2^{1/2}v_{k,j}y$. It follows that for every $j=1,\dots,n$
\[b=\sum_{j=1}^{n}b^{1/2}u_j^*\alpha_{t_{j}}(a)u_jb^{1/2}\geq b^{1/2}u_j^*\alpha_{t_{j}}(a)u_jb^{1/2}\geq0.\]
Choose an $i$ such that $b^{1/2}u_i^*\alpha_{t_{i}}(a)u_ib^{1/2}\neq0$ (there is one since $b\neq0$), and set \[c=\alpha_{t_{i}^{-1}}(b^{1/2}u_i^*\alpha_{t_{i}}(a)u_ib^{1/2})=(\alpha_{t_{i}^{-1}}(u_ib^{1/2}))^*a\alpha_{t_{i}^{-1}}(u_ib^{1/2}).\] Then $c\neq0$, $c\precsim a$ and $\alpha_{t_{i}}(c)\precsim b$. With $z:=\langle c\rangle$, we have $z\leq x$ and $\hat\alpha_{t_{i}}(z)\leq y$ so $\alpha$ is topologically transitive.
\end{proof}

Recall that a \cstar-algebra $B$ is \emph{prime} if for every pair of non-trivial ideals $I,J\subset B$, $IJ=I\cap J\neq(0)$. It natural to ask what dynamical conditions give rise to prime reduced crossed products. We briefly study this issue.

A \cstar-system $(A,\Gamma,\alpha)$ is said to have the \emph{intersection property} if every ideal $I\subset A\rtimes_{\lambda,\alpha}\Gamma$ has non-trivial intersection with $A$. If the action $\alpha$ is properly outer, then the intersection property follows (see lemma~\ref{CuntzSmaller}). When $A=C_0(X)$, proper outerness is equivalent to topological freeness, and it well known that if the action is topologically free, the reduced crossed product $C_0(X)\rtimes_{\lambda}\Gamma$ is prime if and only if the action $\Gamma\curvearrowright X$ is topologically transitive. We now can generalize this to the noncommutative setting.

\begin{thm}\label{ToptransPrime} Let $A$ be a \cstar-algebra, $\Gamma$ a countable discrete group and $\alpha:\Gamma\rightarrow\Aut(A)$ an action. If $A\rtimes_{\lambda,\alpha}\Gamma$ is prime then $\alpha$ is topologically transitive. Conversely, if $(A,\Gamma,\alpha)$ has the intersection property and $\alpha$ is topologically transitive then $A\rtimes_{\lambda,\alpha}\Gamma$ is prime.
\end{thm}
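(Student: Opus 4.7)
The plan is to prove the two implications separately, invoking topological transitivity (respectively primeness) to extract Cuntz-level data that links up ideals in the crossed product.

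For the reverse direction, let $I,J\subset A\rtimes_{\lambda,\alpha}\Gamma$ be non-zero ideals. The intersection property makes both $I\cap A$ and $J\cap A$ non-zero ideals of $A$, each automatically $\Gamma$-invariant via the conjugation $u_s(I\cap A)u_s^*=\alpha_s(I\cap A)\subset I\cap A$. Pick positive non-zero $a\in I\cap A$ and $b\in J\cap A$ and apply topological transitivity to $\langle a\rangle,\langle b\rangle\in W(A)$ to obtain $t\in\Gamma$ and a non-zero $c\in M_\infty(A)^+$ satisfying $c\precsim a$ and $\alpha_t(c)\precsim b$. Cuntz subequivalence to $a\in I\cap A$ places $c$ in the closed ideal of $M_\infty(A)$ generated by $a$, hence in $M_\infty(I\cap A)\subset M_\infty(I)$; similarly $\alpha_t(c)\in M_\infty(J\cap A)\subset M_\infty(J)$. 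Since $\alpha_t(c)=u_tcu_t^*$ also lies in $M_\infty(I)$, we conclude $0\neq\alpha_t(c)\in M_\infty(I\cap J)$, which forces $I\cap J\neq 0$ in $A\rtimes_{\lambda,\alpha}\Gamma$.

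For the forward direction, suppose $B:=A\rtimes_{\lambda,\alpha}\Gamma$ is prime and fix non-zero $x=\langle a\rangle,y=\langle b\rangle\in W(A)$ with $a,b\in M_n(A)^+$ for some common $n$. Primeness passes to the matrix algebra $M_n(B)\cong M_n(A)\rtimes_{\lambda,\alpha^{(n)}}\Gamma$, so $aM_n(B)b\neq 0$. Density of the algebraic crossed product yields a finite sum $z=\sum_{s\in F}c_s u_s$ with $c_s\in M_n(A)$ and $azb\neq 0$. Then $azb=\sum_{s\in F}ac_s\alpha_s^{(n)}(b)u_s$, and the faithful canonical conditional expectation $\mathbb{E}:M_n(B)\to M_n(A)$ recovers each coefficient via $\mathbb{E}(azbu_{s^{-1}})=ac_s\alpha_s^{(n)}(b)$, so $d:=ac_s\alpha_s^{(n)}(b)\neq 0$ for at least one $s\in F$. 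Now $dd^*\precsim a$ and $d^*d\precsim\alpha_s^{(n)}(b)$, while $dd^*$ and $d^*d$ are always Cuntz equivalent, so applying $\alpha_{s^{-1}}^{(n)}$ to the second inequality gives $\alpha_{s^{-1}}^{(n)}(dd^*)\precsim b$. Setting $c:=dd^*$ and $t:=s^{-1}$, the class $\langle c\rangle\in W(A)$ is non-zero with $\langle c\rangle\leq x$ and $\hat\alpha_t(\langle c\rangle)\leq y$, establishing topological transitivity.

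The principal technical hurdle is the forward direction: translating the abstract primeness condition $aM_n(B)b\neq 0$ into a concrete non-vanishing coefficient $ac_s\alpha_s(b)$. The cleanest route is to reduce $z$ to a finite sum in the algebraic crossed product and then extract a single Fourier coefficient via the faithful conditional expectation $\mathbb{E}$; the remainder of the argument is a matter of leveraging the identity $dd^*\approx d^*d$ in the Cuntz semigroup to absorb the twist introduced by $\alpha_s$. The reverse direction is more routine, the key observation being that Cuntz subequivalence to an ideal element places one back in the (matrix-amplified) ideal, and that $\alpha_t$ is implemented by the unitary $u_t$ inside the crossed product.
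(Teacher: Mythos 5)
Your proposal is correct and follows essentially the same route as the paper's proof in both directions: the converse uses the intersection property plus transitivity to place a nonzero Cuntz-dominated element (entrywise, via the approximants $v_k^*av_k$) into $M_\infty(I\cap J)$, and the forward direction passes primeness to $M_n(A)\rtimes_{\lambda,\alpha^{(n)}}\Gamma$, extracts a nonzero twisted coefficient, and uses $dd^*\approx d^*d$ together with standard Cuntz comparisons to absorb the $\alpha_s$-twist. The only cosmetic difference is that you isolate the nonzero coefficient via the conditional expectation applied to $azbu_{s^{-1}}$ and work with $d=ac_s\alpha^{(n)}_s(b)$, whereas the paper takes the nonzero element $b^{1/2}cu_{s^{-1}}a^{1/2}$ directly and sets $w=\alpha_s\bigl(b^{1/2}c\alpha_{s^{-1}}(a^{1/2})\bigr)$; the comparison arguments are the same.
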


\begin{proof}
Assume $\alpha$ is topologically transitive and that $(A,\Gamma,\alpha)$ has the intersection property. Let $I$ and $J$ be non-zero ideals in $A\rtimes_{\lambda,\alpha}\Gamma$. By the intersection property there are $0\neq x\in I\cap A$ and $0\neq y\in J\cap A$. Set $a=x^*x\in I\cap A^+$ and $b=y^*y\in J\cap A^+$. By topological transitivity there is a $0\neq z\in W(A)$ and $t\in\Gamma$ with $z\leq\langle a\rangle$ and $\hat\alpha_{t}(z)\leq\langle b\rangle$. Writing $z=\langle c\rangle$ for some $c\in M_{n}(A)^+$, we have $c\precsim a$ and $\alpha_{t}(c)\precsim b$. There is a sequence $v_k\in M_{1\times n}(A)$ with $v_k^*av_k\rightarrow c$ as $k\rightarrow\infty$. If $v_k=(v_{k,1},\dots,v_{k,n})$, and $c=(c_{i,j})_{i,j}$ then for every $1\leq i,j\leq n$ we get $v_{k,i}^*av_{k,j}\rightarrow c_{i,j}$ as $k\rightarrow\infty$. Note that $v_{k,i}^*av_{k,j}\in A\cap I$ for each $i,j,k$, so $c_{i,j}\in A\cap I$ for every $i,j$. Since $A\cap I$ is a $\Gamma$-invariant ideal in $A$ we know that $\alpha_{t}(c_{i,j})\in A\cap I$ for every $i,j$.

Similarly there is a sequence $u_k\in M_{1\times n}(A)$ with $u_k^*bu_k\rightarrow \alpha_{t}(c)$ as $k\rightarrow\infty$ giving $u_{k,i}^*bu_{k,j}\rightarrow \alpha_{t}(c_{i,j})$ for every $i,j$ where $u_k=(u_{k,1},\dots,u_{k,n})$. Since $u_{k,i}^*bu_{k,j}$ belongs to $A\cap J$ for every $i,j$ so do the $\alpha_{t}(c_{i,j})$. With $c$ non-zero, there is a $c_{i,j}\neq0$ so that
$\alpha_{t}(c_{i,j})\in(A\cap I)\cap (A\cap J)\subset I\cap J$. Thus $A\rtimes_{\lambda}\Gamma$ is prime.

Conversely, now suppose $A\rtimes_{\lambda}\Gamma$ is prime. Let $x,y\in W(A)$ be nonzero. We can write $x=\langle a\rangle$ and $y=\langle b\rangle$ with $a,b\in M_{n}(A)^+$. Since $A\rtimes_{\lambda}\Gamma$ is prime, $M_{n}(A\rtimes_{\lambda}\Gamma)\cong M_{n}(A)\rtimes_{\lambda,\alpha^(n)}\Gamma$ is prime, so we can find a non-zero $c\in M_{n}(A)$ and $s\in\Gamma$ with
\[0\neq b^{1/2}cu_{s^{-1}}a^{1/2}=b^{1/2}cu_{s^{-1}}a^{1/2}u_su_{s^{-1}}=b^{1/2}c\alpha_{s^{-1}}(a^{1/2})u_{s^{-1}}.\]
Multiplying on the right by the unitary $u_{s}$, we get $v:=b^{1/2}c\alpha_{s^{-1}}(a^{1/2})$ is non-zero in $M_{n}(A)$. Setting $w=\alpha_s(v)$ we get $z:=\langle ww^*\rangle\leq\langle a\rangle=x$ since
\[ww^*=\alpha_s(v)\alpha_s(v)^*=\alpha_{s}(b^{1/2}c)a^{1/2}(\alpha_{s}(b^{1/2}c)a^{1/2})^*=\alpha_{s}(b^{1/2}c)a\alpha_{s}(b^{1/2}c)^*\precsim a.\]
On the other hand,
\begin{align*}w^*w&=\alpha_s(v)^*\alpha_s(v)=\alpha_{s}(v^*v)=\alpha_s(\alpha_{s^{-1}}(a^{1/2})c^*bc\alpha_{s^{-1}}(a^{1/2}))
\\&=a^{1/2}\alpha_{s}(c)^*\alpha_{s}(b)\alpha_{s}(c)a^{1/2}=(\alpha_{s}(c)a^{1/2})^*\alpha_{s}(b)\alpha_{s}(c)a^{1/2}\precsim\alpha_{s}(b),
\end{align*}
which says that $z=\langle ww^*\rangle=\langle w^*w\rangle\leq\langle\alpha_{s}(b)\rangle=\hat\alpha_{s}(\langle b\rangle)=\hat\alpha_{s}(y)$. Therefore we have found $0\neq z\in W(A)$, and $t:=s^{-1}\in\Gamma$ with $z\leq y$ and $\hat\alpha_{t}(z)\leq y$ as was required.
\end{proof}

We end with a cute result that will be needed later on.

\begin{prop}\label{nfilling=nminimal} Let $(A,\Gamma,\alpha)$ be a \cstar-system. Then $\alpha$ is $W$-$n$-minimal if and only if $\alpha$ is $W$-$n$-filling. 
\end{prop}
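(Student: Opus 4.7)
The forward implication is immediate: given a non-zero $g \in W(A)$, apply $W$-$n$-filling to $g_1=g_2=\cdots=g_n=g$ to obtain the required $t_1,\dots,t_n$. All the content is in the converse, so assume $\alpha$ is $W$-$n$-minimal and let non-zero $g_1,\dots,g_n\in W(A)$ be given.

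The main idea is to bootstrap topological transitivity in order to reduce $n$ inputs to a single one. First I would observe that a $W$-$n$-minimal action is in particular $W$-minimal, hence $\Gamma$-simple by Proposition~\ref{min2}, hence minimal, and therefore topologically transitive by the proposition preceding Theorem~\ref{ToptransPrime}. The plan is then to produce a single non-zero $z\in W(A)$ along with group elements $s_1,\dots,s_n\in\Gamma$ such that $\hat\alpha_{s_j}(z)\leq g_j$ for every $j$, after which $W$-$n$-minimality applied to $z$ will finish the proof.

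To construct such a $z$, iterate topological transitivity: set $z_1:=g_1$ and $s_1:=e$, and at stage $j=2,\dots,n$ apply topological transitivity to the pair $(z_{j-1},g_j)$ to obtain $0\neq z_j\leq z_{j-1}$ and $s_j\in\Gamma$ with $\hat\alpha_{s_j}(z_j)\leq g_j$. Taking $z:=z_n$, the descending chain $z\leq z_{n-1}\leq\cdots\leq z_2\leq g_1$ combined with order-preservation of each $\hat\alpha_{s_j}$ on $W(A)$ yields $\hat\alpha_{s_j}(z)\leq\hat\alpha_{s_j}(z_j)\leq g_j$ for all $j$, while $\hat\alpha_{s_1}(z)=z\leq g_1$.

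Finally, applying $W$-$n$-minimality to $z$ produces $r_1,\dots,r_n\in\Gamma$ with $\sum_{j=1}^n \hat\alpha_{r_j}(z)\geq\langle 1\rangle$. Setting $t_j:=r_j s_j^{-1}$, monotonicity of $\hat\alpha_{t_j}$ on the inequality $\hat\alpha_{s_j}(z)\leq g_j$ gives
\[\sum_{j=1}^n \hat\alpha_{t_j}(g_j)\;\geq\;\sum_{j=1}^n \hat\alpha_{t_j}\!\bigl(\hat\alpha_{s_j}(z)\bigr)\;=\;\sum_{j=1}^n \hat\alpha_{r_j}(z)\;\geq\;\langle 1\rangle,\]
which is precisely the $W$-$n$-filling condition. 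The only conceptual step is recognizing that topological transitivity (which is cheap once we know the action is minimal) lets us extract a common $z$ dominated by each $g_j$ after translation; the induction itself and the final substitution are routine.
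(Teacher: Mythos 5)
Your proof is correct and follows essentially the same route as the paper: the forward direction is immediate, and for the converse you iterate topological transitivity (available because $W$-$n$-minimal implies $W$-minimal, hence $\Gamma$-simple, hence minimal) to extract a single non-zero $z$ whose translates lie below each $g_j$, then apply $W$-$n$-minimality to $z$ and compose group elements. Your bookkeeping with $t_j=r_js_j^{-1}$ is just a cleaner rewriting of the paper's final substitution.
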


\begin{proof} The $n$-minimal property easily follows from the $n$-filling property. For the converse, let $x_1,\dots, x_n$ be non-zero in $W(A)$. Let $t_1=e$. Since $\alpha$ is $n$-minimal, $\alpha$ is topologically transitive, so we can find $0\neq z_1\leq x_1$ and $t_2\in\Gamma$ with $z_1\leq t_2.x_2$.  Next, again by transitivity find $0\neq z_2\leq z_1$ and $t_3\in\Gamma$ with $z_2\leq t_3.x_3$. We continue in this fashion until we find $0\neq z_{n-1}\leq z_{n-2}$ and $t_n\in\Gamma$ with $z_{n-1}\leq t_n.x_n$. Now apply the $n$-minimal property to locate $s_1,\dots,s_n$ in $\Gamma$ with $\sum_{j=1}^{n}s_j.z_{n-1}\geq\langle 1\rangle$. From these orderings we get
\begin{align*}s_1t_n.x_n&\geq s_1.z_{n-1}, \quad s_2t_{n-1}.x_{n-1}\geq s_2.z_{n-2}\geq s_2.z_{n-1},\dots\\
\dots, s_{n-1}t_2.x_2&\geq s_{n-1}.z_1\geq s_{n-1}.z_{n-1},\quad s_n.x_1\geq s_n.z_1\geq s_n.z_{n-1}.
\end{align*}
We thus obtain
\[\sum_{j=1}^{n}s_{n-j+1}t_j.x_j\geq\sum_{j=1}^{n}s_j.z_{n-1}\geq\langle 1\rangle\]
so that $\alpha$ is indeed $W$-$n$-filling.
\end{proof}

\section{Finiteness, Paradoxical Decompositions,  and the Type Semigroup}

In this section we study $K$-theoretic conditions, in the form of paradoxical phenomena, that characterize finite and infinite crossed products.  As a brief reminder, a projection $p\in A$ is properly infinite if there are two subprojections $q,r\leq p$ with $qr=0$ and $q\sim p\sim r$.  The algebra $A$ is properly infinite if $1_A$ is properly infinite.  If every hereditary \cstar-subalgebra of $A$ contains a properly infinite projection then $A$ is called purely infinite. S. Zhang showed that $A$ is purely infinite if and only in $\RR(A)=0$ and every projection in $A$ is properly infinite~\cite{Zh1}. It was a longstanding open question whether there existed a unital, separable, nuclear and simple \cstar-algebra which was neither stably finite or purely infinite. M. R{\o}rdam settled the issue in~\cite{R1} by exhibiting a unital, simple, nuclear, and separable \cstar-algebra $D$ containing a finite and infinite projection $p,q$. It follows that $A=qDq$ is unital, separable, nuclear, simple, and properly infinite, but not purely infinite. It is natural to ask if there is a smaller class of algebras for which such a dichotomy exists. Theorem~\ref{Di} below is a result in this direction.

\subsection{Paradoxical Decompositions}

We first construct infinite algebras arising from crossed products by generalizing the notion of a local boundary action to the noncommutative setting. A continuous action $\Gamma\curvearrowright X$ of a discrete group on a locally compact space is called a \emph{local boundary action} if for every non-empty open set $U\subset X$ there is an open set $V\subset U$ and $t\in\Gamma$ with $t.\overline{V}\subsetneq V$. Laca and Spielberg showed in~\cite{LS} that such actions yield infinite projections in the reduced crossed product $C_{0}(X)\rtimes_{\lambda}\Gamma$. Sierakowski remarked that the condition $t.\overline{V}\subsetneq V$ for \emph{some} non-empty open set $V$ and group element $t\in\Gamma$ is equivalent to the existence of open sets $U_1, U_2\subset X$ and elements $t_1, t_2\in\Gamma$ such that $U_1\cup U_2=X$, $t_1.U_1\cap t_2.U_2=\emptyset$, and $t_1.U_1\cup t_2.U_2\neq X$. He generalized this by defining \emph{paradoxical} actions. A transformation group $(X,\Gamma)$ is $n$-paradoxical if there exist open subsets $U_1,\dots, U_n\subset X$ and elements $t_1,\dots, t_n\in\Gamma$ such that
\[\bigcup_{j=1}^{n}U_{j}=X, \qquad \bigsqcup_{j=1}^{n}t_j.U_{j}\subsetneq X.\]
He then showed that the algebra $C(X)\rtimes_{\lambda}\Gamma$ is infinite provided that $X$ is compact and the action $\Gamma\curvearrowright X$ is $n$-paradoxical for some $n$. We do the same here in the noncommutative setting.

Let $\alpha:\Gamma\rightarrow\Aut(A)$ be a \cstar-dynamical system where $\Gamma$ is a discrete group. Once again, we look at the induced actions $\hat\alpha:\Gamma\curvearrowright K_{0}(A)^{+}$ and $\hat\alpha:\Gamma\curvearrowright W(A)$ given by $t.x=\hat\alpha_{t}(x)$ for $t\in\Gamma$ and $x\in K_{0}(A)^{+}$ or $W(A)$.

\begin{prop}Let $A$ be a stably finite C*-algebra with cancellation and such that $K_{0}(A)^+$ has Riesz refinement. Let $\alpha:\Gamma\rightarrow\Aut(A)$ be a $K_{0}$-paradoxical action in the sense that there exist $x_{1},\dots, x_{n}\in K_{0}(A)^+$ and group elements $t_{1},\dots, t_{n}\in\Gamma$ with 
\[\sum_{j=1}^{n}x_j\geq [1_{A}]_0,\quad \mbox{and}\quad \sum_{j=1}^{n}\hat\alpha_{t_{j}}(x_j)< [1_{A}]_0.\] 
Then $A\rtimes_{\lambda}\Gamma$ is infinite.
\end{prop}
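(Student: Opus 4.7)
The plan is to build a projection in a matrix algebra over $A\rtimes_\lambda\Gamma$ that is Murray--von Neumann equivalent to $1_A$ yet properly dominates it, so that the criterion from~\cite{KR2} recalled in the preliminaries ($p$ is infinite iff $p\oplus b\precsim p$ for some non-zero $b$) forces $1_A$ to be infinite in $A\rtimes_\lambda\Gamma$.

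First I would Riesz-decompose $[1_A]_0$ against the $x_j$'s. Since $\sum_j x_j\geq [1_A]_0$, Riesz refinement in $K_0(A)^+$ produces $y_1,\dots,y_n\in K_0(A)^+$ with $\sum_j y_j=[1_A]_0$ and $y_j\leq x_j$ for each $j$. Cancellation (injectivity of $\gamma:V(A)\to K_0(A)$) lifts each $y_j$ to a projection $p_j$ over $A$, and by placing these in disjoint diagonal blocks of $M_N(A)$ for $N$ large enough I obtain mutually orthogonal projections $P_1,\dots,P_n$ with $P:=\sum_j P_j$ satisfying $[P]_0=[1_A]_0$. A second application of cancellation yields $P\sim 1_A\oplus 0_{N-1}$ in $M_N(A)$.

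Next I would transport the $P_j$'s through the canonical unitaries of the crossed product. With $U_t:=\diag(u_t,\dots,u_t)\in M_N(A\rtimes_\lambda\Gamma)$ and $V_j:=U_{t_j}P_j$, each $V_j$ implements $P_j\sim Q_j:=\alpha_{t_j}^{(N)}(P_j)$. Because $\alpha^{(N)}$ acts entry-wise, $Q_j$ is again supported in the $(j,j)$-block, so the $Q_j$'s remain mutually orthogonal; the same block structure forces all cross terms $P_jU_{t_j^{-1}t_k}P_k$ ($j\neq k$) to vanish, giving $V:=\sum_j V_j$ with $V^*V=P$ and $VV^*=Q:=\sum_j Q_j$, so $P\sim Q$ in $M_N(A\rtimes_\lambda\Gamma)$. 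Meanwhile the paradoxicality hypothesis yields
\[[Q]_0=\sum_j\hat\alpha_{t_j}(y_j)\leq\sum_j\hat\alpha_{t_j}(x_j)<[1_A]_0,\]
so I may choose a non-zero projection $g$ over $A$ with $[g]_0=[1_A]_0-[Q]_0$; cancellation then gives $Q\oplus g\sim 1_A$ in a matrix algebra over $A$. Combining these equivalences inside $A\rtimes_\lambda\Gamma$ produces $1_A\oplus g\sim P\oplus g\sim Q\oplus g\sim 1_A$ with $g\neq 0$, which is what is needed.

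The main obstacle is the orthogonality bookkeeping in the transport step: the $P_j$'s and the $Q_j$'s must be mutually orthogonal \emph{simultaneously}, and this must survive conjugation by the very different unitaries $U_{t_j}$. Embedding each $p_j$ as a \emph{separate} diagonal block of $M_N(A)$ is the key trick that dovetails with the entry-wise action of $\alpha^{(N)}$ and keeps both families orthogonal; the remainder of the argument is a straightforward dance between Riesz refinement in $K_0(A)^+$ and cancellation.
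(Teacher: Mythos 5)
Your proposal is correct and takes essentially the same route as the paper: Riesz-refine the $x_j$ against $[1_A]_0$ to get $y_j$ with $\sum_j y_j=[1_A]_0$, lift to projections by cancellation, transport them with the canonical unitaries $u_{t_j}$, and use the strict inequality together with stable finiteness to produce a non-zero leftover. The only differences are bookkeeping: the paper realizes the $p_j$'s and their images as mutually orthogonal projections inside $A$ itself and exhibits a proper subprojection $q\lneq 1_A$ with $\iota(q)\sim 1$, whereas you achieve orthogonality with diagonal matrix blocks and conclude via the criterion $1\oplus \iota(g)\precsim 1$ with $\iota(g)\neq 0$.
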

 
 \begin{proof}Denote by $\iota: A\rightarrow A\rtimes_{\lambda}\Gamma$ the canonical embedding. Given that $\sum_{j=1}^{n}x_j\geq [1_{A}]_0$, there is an $r\in\mathcal{P}_{\infty}(A)$ with $\sum_{j=1}^{n}x_j= [1_{A}]_0+[r]_0$.  With the refinement property one can find elements $\{y_{j}\}_{j}^{n}, \{z_{j}\}_{j}^{n}\subset K_{0}(A)^{+}$ with \[ x_j=y_j+z_j,\quad \sum_{j=1}^{n}y_j=[1_{A}]_0,\quad \sum_{j=1}^{n}z_j=[r]_0 .\]
Then $\sum_{j=1}^{n}\hat\alpha_{t_{j}}(y_j)\leq\sum_{j=1}^{n}\hat\alpha_{t_{j}}(x_j)< [1_{A}]_0$. Cancellation implies there are mutually orthogonal projections $p_{1},\dots, p_{n}$ in $A$ with $[p_j]_0=y_j$, as well as mutually orthogonal  projections $q_{1},\dots, q_{n}$ in $A$ with $[q_j]_0=\hat\alpha_{t_{j}}(x_j)=\hat\alpha_{t_{j}}([p_j]_0)=[\alpha_{t_{j}}(p_j)]_0$. It also implies that $q_j\sim \alpha_{t_{j}}(p_j)$ as projections in $A$ for each $j$, whence $\iota(q_j)\sim\iota(\alpha_{t_{j}}(p_j))\sim\iota(p_j)$ as projections in $A\rtimes_{\lambda}\Gamma$. Setting $p=\sum_{j}p_j$, and $q=\sum_{j}q_j$ we obtain
\[\iota(q)=\sum_{j=1}^{n}\iota(q_j)\sim \sum_{j=1}^{n}\iota(p_j)=\iota(p).\]
On the other hand $[p]_0=\big[\sum_{j}p_j\big]=\sum_{j}[p_j]_0=\sum_{j}y_j=[1_A]_0$. Cancellation once more implies $p\sim 1_A$ and therefore $\iota(p)\sim\iota(1_A)=1_{A\rtimes_{\lambda}\Gamma}$. Thus we have $\iota(q)\sim 1_{A\rtimes_{\lambda}\Gamma}$. 

All is needed to show is that $\iota(q)\neq 1_{A\rtimes_{\lambda}\Gamma}$. To this end we observe that
\[[q]_0=\big[\sum_{j=1}^{n}q_j\big]_0=\sum_{j=1}^{n}[q_j]_0=\sum_{j=1}^{n}\hat\alpha_{t_{j}}(x_j)<[1]_0,\]
so that $[1_A]_0-[q]_0=[1_A-q]_0\neq0$, which implies $1-q\neq0$ by stable finiteness. Therefore $\iota(q)\neq\iota(1_A)=1_{A\rtimes_{\lambda}\Gamma}$ and $A\rtimes_{\lambda}\Gamma$ is infinite as claimed.
 \end{proof} 
 
A similar result holds with less restrictions on the underlying algebra $A$ but with a slight strengthening on the dynamics. For this result we will make the following convention: for $x,y\in W(A)$ we shall write $x<y$ to mean $x+z\leq y$ for some non-zero $z\in W(A)$.

\begin{prop} Let $A$ be a unital C*-algebra and let $\alpha:\Gamma\rightarrow\Aut(A)$ be an action which is $W$-paradoxical in the sense that there exist $x_{1},\dots, x_{n}\in W(A)$ and group elements $t_{1},\dots, t_{n}\in\Gamma$ with $\sum_{j=1}^{n}x_j\geq \langle1_{A}\rangle$ and $\sum_{j=1}^{n}\hat\alpha_{t_{j}}(x_j)< \langle1_{A}\rangle$. Then $A\rtimes_{\lambda}\Gamma$ is infinite.
\end{prop}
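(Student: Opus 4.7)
My plan is to exhibit $1_{A\rtimes_{\lambda}\Gamma}$ as an infinite element of the Cuntz semigroup $W(A\rtimes_{\lambda}\Gamma)$, invoking the criterion from~\cite{KR2} recalled in the preliminaries: a projection $p$ is infinite iff $p\oplus b\precsim p$ for some non-zero $b$. The whole argument lives in the target $W(A\rtimes_{\lambda}\Gamma)$, and the key observation that drives everything is that the action $\hat\alpha$ on $W(A)$, after we pass into $W(A\rtimes_{\lambda}\Gamma)$ via the canonical inclusion $\iota:A\hookrightarrow A\rtimes_{\lambda}\Gamma$, becomes trivial: for each $t\in\Gamma$ and $a\in M_{k}(A)^{+}$, the unitary $I_{k}\otimes u_{t}\in M_{k}(A\rtimes_{\lambda}\Gamma)$ satisfies $(I_{k}\otimes u_{t})a(I_{k}\otimes u_{t})^{*}=\alpha_{t}^{(k)}(a)$, so $\langle a\rangle=\langle\alpha_{t}(a)\rangle$ inside $W(A\rtimes_{\lambda}\Gamma)$.

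First I would write each $x_{j}=\langle a_{j}\rangle$ with $a_{j}\in M_{\infty}(A)^{+}$ and unpack the hypotheses. The inequality $\sum_{j}x_{j}\geq\langle 1_{A}\rangle$ says $1_{A}\precsim a_{1}\oplus\cdots\oplus a_{n}$ in $A$, which is preserved by the inclusion $\iota$ because any Cuntz witness $(v_{k})\subset M_{\infty}(A)$ remains a Cuntz witness in $M_{\infty}(A\rtimes_{\lambda}\Gamma)$. The strict inequality $\sum_{j}\hat\alpha_{t_{j}}(x_{j})<\langle 1_{A}\rangle$ in $W(A)$ unwinds, by definition of $<$, to the existence of a non-zero $b\in M_{\infty}(A)^{+}$ with
\[
\alpha_{t_{1}}(a_{1})\oplus\cdots\oplus\alpha_{t_{n}}(a_{n})\oplus b\ \precsim\ 1_{A}\qquad\text{in }A,
\]
and again this subequivalence is inherited by $A\rtimes_{\lambda}\Gamma$.

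Next I would chain the three ingredients inside $W(A\rtimes_{\lambda}\Gamma)$:
\[
\langle 1\rangle+\langle b\rangle\ \leq\ \sum_{j=1}^{n}\langle a_{j}\rangle+\langle b\rangle\ =\ \sum_{j=1}^{n}\langle\alpha_{t_{j}}(a_{j})\rangle+\langle b\rangle\ \leq\ \langle 1\rangle,
\]
where the middle equality is the unitary-equivalence observation applied termwise. This gives $1_{A\rtimes_{\lambda}\Gamma}\oplus b\precsim 1_{A\rtimes_{\lambda}\Gamma}$.

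The only remaining point is to verify that $b$ is non-zero when viewed in $A\rtimes_{\lambda}\Gamma$, but this is immediate because $\iota$ is faithful on $A$ for the reduced crossed product (the conditional expectation $\mathbb{E}$ recalled at the end of the preliminaries gives a one-line proof: $\mathbb{E}(\iota(b))=b\neq 0$). Applying the Kirchberg--R\o rdam criterion then yields that $1_{A\rtimes_{\lambda}\Gamma}$ is infinite. There is no real obstacle here beyond correctly identifying that the paradoxical hypothesis is really a statement in $W(A)$ that collapses to an honest witness of infiniteness once the action is absorbed into the unitaries $u_{t_{j}}$ of the crossed product; the subtlety, such as it is, is matrix-level bookkeeping to ensure the unitary conjugation identity $\langle a_{j}\rangle=\langle\alpha_{t_{j}}(a_{j})\rangle$ is applied at the appropriate matrix size.
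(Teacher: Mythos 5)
Your proof is correct and follows essentially the same route as the paper: push the two Cuntz inequalities into $W(A\rtimes_{\lambda}\Gamma)$ via $\iota$, absorb the action using conjugation by the unitaries $u_{t_j}$ (amplified to matrices), and conclude $1\oplus\iota(b)\precsim 1$ with $\iota(b)\neq 0$, invoking the Kirchberg--R{\o}rdam characterization of infiniteness. The only difference is cosmetic: you justify $\iota(b)\neq 0$ explicitly via the conditional expectation, which the paper takes for granted.
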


\begin{proof} Again let $\iota: A\rightarrow A\rtimes_{\lambda}\Gamma$ denote the canonical embedding and for $t\in\Gamma$ write $u_t$ for the canonical unitary in $A\rtimes_{\lambda}\Gamma$ that implements the action $\alpha_t: A\rightarrow A$, so that $\iota(\alpha_{t}(a))=u_t\iota(a)u_t^*\approx \iota(a)$ for every $a\in A$ and $t\in\Gamma$. If $a\in M_{n}(A)^+$ then by amplification we have  $\iota^{(n)}(\alpha_{t}^{(n)}(a))=(u_t\otimes 1_A)\iota^{(n)}(a)(u_t\otimes 1_{n})^*\approx \iota^{(n)}(a)$ for every $t\in\Gamma$. For economy we will omit denoting the amplification when the context is understood.

For each $j=1,\dots,n$ set $x_j=\langle a_j\rangle$ for $a_{j}\in M_{\infty}(A)^+$. Then we have 
\[\langle1_A\rangle\leq \sum_{j=1}^{n}x_j=\sum_{j=1}^{n}\langle a_j\rangle=\langle a_1\oplus\hdots \oplus a_n\rangle,\]
which implies $1_A\precsim \oplus_{j=1}^{n}a_j$ in $M_{\infty}(A)^+$. Applying $\iota$ we get 
$1_{A\rtimes_{\lambda}\Gamma}\precsim \oplus_{j=1}^{n}\iota(a_j)\approx \oplus_{j=1}^{n}\iota(\alpha_{t_{j}}(a_j))$ in  $M_{\infty}(A\rtimes_{\lambda}\Gamma)^+$. 

By our convention we have
\[\langle\alpha_{t_{1}}(a_1)\oplus\hdots\oplus\alpha_{t_{n}}(a_{n})\oplus b \rangle=\sum_{j=1}^{n}\hat\alpha_{t_{j}}(x_j)+\langle b\rangle\leq\langle1_{A}\rangle\]
for some non-zero $b\in M_{\infty}(A)^+$. Thus $\alpha_{t_{1}}(a_1)\oplus\hdots\oplus\alpha_{t_{n}}(a_{n})\oplus b\precsim 1_A$ and $\iota(\alpha_{t_{1}}(a_1))\oplus\hdots\oplus\iota(\alpha_{t_{n}}(a_{n}))\oplus \iota(b)\precsim 1_{A\rtimes_{\lambda}\Gamma}$. Together we get
\[1_{A\rtimes_{\lambda}\Gamma}\oplus\iota(b)\precsim\iota(\alpha_{t_{1}}(a_1))\oplus\hdots\oplus\iota(\alpha_{t_{n}}(a_{n}))\oplus \iota(b)\precsim 1_{A\rtimes_{\lambda}\Gamma}.\]
Since $\iota(b)=\neq0$ it follows that $A\rtimes_{\lambda}\Gamma$ is infinite as claimed.
\end{proof}

We make the brief remark that an action $\Gamma\curvearrowright A$ is $K_0$-paradoxical in the above sense with $n=2$  if and only if there is a non-zero $x\in \Sigma(A)$ (the scale of $A$) and $t\in\Gamma$ with $\hat\alpha_{t}(x)<x$. 

Perhaps what has been called \emph{paradoxical} is misleading because, in a sense, paradoxicality implies the idea of duplication of sets. Gleaning from the ideas explored in~\cite{KN}, we define a notion of paradoxical decomposition with covering multiplicity in the noncommutative setting.

\begin{defn} Let $A$ be a \cstar-algebra, $\Gamma$ a discrete group and $\alpha:\Gamma\rightarrow\Aut(A)$ an action with its induced action $\hat\alpha$. Let $0\neq x\in K_{0}(A)^{+}$ and $k>l>0$ be positive integers. We say $x$ is \emph{$(\Gamma,k,l)$-paradoxical} if there are $x_{1},\dots,x_{n}$ in $K_{0}(A)^{+}$ and $t_{1},\dots,t_{n}$ in $\Gamma$ such that
\[\sum_{j=1}^{n}x_{j}\geq kx,\qquad \mbox{and}\qquad \sum_{j=1}^{n}\hat\alpha_{t_{j}}(x_{j})\leq lx.\]

If an element $x\in K_{0}(A)^{+}$ fails to be $(\Gamma,k,l)$-paradoxical for all integers $k>l>0$ we call $x$ \emph{completely non-paradoxical}. The action $\alpha$ will be called \emph{completely non-paradoxical} if every member of $K_{0}(A)^{+}$ is completely non-paradoxical.
\end{defn}

The notion of a quasidiagonal action was first introduced in ~\cite{KN} and further studied in~\cite{Ra} from a $K$-theoretic viewpoint. The author of~\cite{Ra} observed that MF (or equivalently QD) actions of discrete groups $\Gamma$ on AF algebras admit, in a local sense, $\Gamma$-invariant traces on $K_{0}(A)$, so it should come to no surprise that these actions do not allow paradoxical decompositions at the $K$-theoretic level. The next proposition illustrates this principle and provides us with our first class of examples of completely non-paradoxical actions.

\begin{prop}\label{Para1} If $\alpha:\Gamma\rightarrow\Aut(A)$ is an MF action of a discrete group $\Gamma$ on a unital AF algebra, then $\alpha$ is completely non-paradoxical.
\end{prop}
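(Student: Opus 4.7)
The strategy is to convert a hypothetical paradoxical decomposition into a numerical contradiction by evaluating it against a suitably $\Gamma$-invariant state on $K_0(A)$. Suppose for contradiction that some non-zero $x\in K_0(A)^+$ were $(\Gamma,k,l)$-paradoxical, witnessed by $x_1,\dots,x_n\in K_0(A)^+$ and $t_1,\dots,t_n\in\Gamma$ with
\[\sum_{j=1}^{n}x_j\geq kx,\qquad \sum_{j=1}^{n}\hat\alpha_{t_j}(x_j)\leq lx.\]
If I can produce a state $\tau:K_0(A)\rightarrow\mathbb{R}$ (positive, order-unit preserving) such that $\tau\circ\hat\alpha_{t_j}=\tau$ for each $j$ and such that $\tau(x)>0$, then applying $\tau$ to the two inequalities above yields
\[k\tau(x)\leq\sum_{j=1}^{n}\tau(x_j)=\sum_{j=1}^{n}\tau(\hat\alpha_{t_j}(x_j))\leq l\tau(x),\]
contradicting $k>l$. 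Thus the entire proposition reduces to producing such a $\tau$.

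The tool for this is the local $\Gamma$-invariant traces on $K_0(A)$ supplied by MF approximations, as developed by the author in~\cite{Ra}. Since $\alpha$ is MF, the crossed product $A\rtimes_{\lambda}\Gamma$ embeds (after quotienting by a suitable ideal in a product of matrix algebras) in a manner compatible with the inclusion of $A$. For any finite $F\subset\Gamma$, any finite $H\subset K_0(A)^+$, and any $\epsilon>0$ one then extracts, from the normalized traces on the matrix approximants, a state $\tau_{F,H,\epsilon}$ on $(K_0(A),[1_A]_0)$ satisfying
\[\bigl|\tau_{F,H,\epsilon}(\hat\alpha_s(y))-\tau_{F,H,\epsilon}(y)\bigr|<\epsilon\]
for all $s\in F$ and $y\in H$. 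Taking $F=\{t_1,\dots,t_n\}$ and including $x,x_1,\dots,x_n$ in $H$, compactness of the state space of $(K_0(A),[1_A]_0)$ in the pointwise topology yields a weak$^{*}$-cluster point $\tau$, which is a genuine state satisfying $\tau\circ\hat\alpha_{t_j}=\tau$ on the finitely many elements used in the paradoxical decomposition. That is all the invariance required for the contradiction.

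The delicate point --- and the main place where the AF hypothesis is used essentially --- is the strict positivity $\tau(x)>0$. Since $A$ is AF, write $x=[p]_0$ for a non-zero projection $p$ sitting inside some finite-dimensional subalgebra $A_m\subset A$. The matrix approximations of $A\rtimes_{\lambda}\Gamma$ can be arranged to be faithful on $A_m$ (this is where the finite-dimensional nature of the underlying $A_m$ is crucial), so that the pulled-back states $\tau_{F,H,\epsilon}$ take on $p$ the value of the normalized rank of its image, a positive rational bounded below independently of $(F,H,\epsilon)$. This uniform lower bound persists to the weak$^{*}$-limit and ensures $\tau(x)>0$. With $\tau$ in hand, the display above completes the proof; thus no $x\in K_0(A)^+$ admits a $(\Gamma,k,l)$-paradoxical decomposition for any $k>l>0$, and $\alpha$ is completely non-paradoxical.
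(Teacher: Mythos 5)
Your reduction (apply a $\Gamma$-invariant positive functional with strictly positive value at $x$ to the two inequalities and contradict $k>l$) is exactly the paper's strategy, and your appeal to the local invariant traces coming from the MF structure is the right source. However, the way you manufacture the functional has a genuine gap at precisely the point you flag as delicate. The paper does not build a normalized state at all: it invokes Proposition 4.8 of~\cite{Ra}, which hands over a subgroup $H\leq K_0(A)$ containing the $F$-iterates of the finite set $S=\{y-kx,\,lx-z,\,x_1,\dots,x_n,\,x\}$ and an \emph{integer-valued} homomorphism $\beta:H\rightarrow\mathbb{Z}$ that is exactly invariant under $\hat\alpha_t$ for $t\in F$ on $S$ and satisfies $\beta(g)>0$ for $0<g\in S$. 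Positivity is then automatic (a nonzero positive element has $\beta$-value at least $1$), and no normalization or limiting procedure is needed.

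Your version instead normalizes at $[1_A]_0$ and takes a weak$^{*}$-cluster point, and then the claim that the values $\tau_{F,H,\epsilon}(x)$ are ``bounded below independently of $(F,H,\epsilon)$'' is unjustified and in general false. Faithfulness of the matrix approximant on the finite-dimensional subalgebra $A_m$ containing $p$ only guarantees that the image of $p$ is a nonzero projection, i.e.\ has rank at least $1$, so its \emph{normalized} rank is only bounded below by $1/n_k$, which may tend to $0$ as the matrix sizes grow; concretely, if $p$ sits in a summand of $A_m$ whose multiplicity into $M_{n_k}$ stays bounded while $n_k\rightarrow\infty$, the normalized value collapses. Since a unital AF algebra can carry tracial states taking arbitrarily small (even zero) values on a fixed nonzero projection, the weak$^{*}$-limit $\tau$ may well satisfy $\tau(x)=0$, and then the inequality $k\tau(x)\leq l\tau(x)$ yields no contradiction. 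So as written the proof does not close; it can be repaired either by quoting the exact local-invariance statement of~\cite{Ra} as the paper does (keeping the functional integer-valued and unnormalized, so that positivity on the finitely many relevant elements is free), or by supplying an actual argument that the chosen MF approximations send $p$ to projections of normalized rank bounded away from zero, which your faithfulness-on-$A_m$ observation does not provide.
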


\begin{proof} Suppose $0\neq x\in K_{0}(A)^{+}$ is $(\Gamma,k,l)$-paradoxical for some positive integers $k>l>0$, so that there are $x_{1},\dots,x_{n}$ in $K_{0}(A)^{+}$ and $t_{1},\dots,t_{n}$ in $\Gamma$ such that
\[y:=\sum_{j=1}^{n}x_{j}\geq kx\qquad \mbox{and}\qquad z:=\sum_{j=1}^{n}\hat\alpha_{t_{j}}(x_{j})\leq lx.\]
Consider the finite sets $F=\{t_{1},\dots,t_{n}\}\subset\Gamma$, and $S=\{y-kx, lx-z,x_{1},\dots,x_{n},x\}\subset K_{0}(A)^{+}$. Since $\alpha$ is quasidiagonal, Proposition 4.8 of~\cite{Ra} guarantees existence of a subgroup $H\leq K_{0}(A)$ which contains all the $F$-iterates of $S$, and a group homomorphism $\beta:H\rightarrow\mathbb{Z}$ with $\beta(\hat\alpha_{t}(g))=\beta(g)$ for each $t\in F$ and $g\in S$. Also, $\beta(g)>0$ for $0<g\in S$. Clearly $y,z,kx,lx$ all belong to the subgroup $H$, and since $\beta(y-kx)\geq 0$, we have $k\beta(x)=\beta(kx)\leq \beta(y)$. Similarly, $\beta(z)\leq l\beta(x)$. Now using the $\Gamma$-invariance of $\beta$,
\[k\beta(x)\leq\beta(y)=\beta\big(\sum_{j=1}^{n}x_{j}\big)=\sum_{j=1}^{n}\beta(x_{j})=\sum_{j=1}^{n}\beta(\hat\alpha_{t_{j}}(x_{j}))=\beta\big(\sum_{j=1}^{n}\hat\alpha_{t_{j}}(x_{j})\big)
=\beta(z)\leq l\beta(x).\]
This is absurd since $\beta(x)>0$ and $l<k$. Thus no such non-zero $x$ exists.
\end{proof}

It was shown by Kerr and Nowak~\cite{KN} that quasidiagonal actions by groups whose reduced group algebras are MF give rise to MF crossed products, which are always stably finite. Indeed, it is the finiteness of the crossed product that is an obstruction to a positive element being paradoxical.

\begin{prop}\label{SF implies CNP} Consider a C*-dynamical system $(A,\Gamma,\alpha)$ with stably finite reduced crossed product $A\rtimes_{\lambda}\Gamma$. Then the induced $\hat\alpha:\Gamma\curvearrowright K_{0}(A)^{+}$ is completely non-paradoxical.
\end{prop}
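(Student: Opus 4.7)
The plan is to argue by contrapositive: assume some nonzero $x \in K_{0}(A)^{+}$ is $(\Gamma,k,l)$-paradoxical with witnesses $x_{1},\dots,x_{n} \in K_{0}(A)^{+}$ and $t_{1},\dots,t_{n} \in \Gamma$, and produce an infinite projection over $A\rtimes_{\lambda}\Gamma$. Since $K_{0}(A)^{+}$ is by definition the image of $V(A)$ under the Grothendieck map, write $x=[p]_{0}$ for some nonzero $p\in\mathcal{P}_{\infty}(A)$ and $x_{j}=[p_{j}]_{0}$. The two paradoxical inequalities then unpack into genuine $K_{0}$-equalities with positive remainders $[r]_{0},[s]_{0}\in K_{0}(A)^{+}$:
\[
\sum_{j=1}^{n}[p_{j}]_{0} = k[p]_{0} + [r]_{0}, \qquad l[p]_{0} = \sum_{j=1}^{n}[\alpha_{t_{j}}(p_{j})]_{0} + [s]_{0}.
\]

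The driving observation is that the canonical embedding $\iota\colon A\hookrightarrow A\rtimes_{\lambda}\Gamma$ collapses the $\Gamma$-action at the level of $K_{0}$. Indeed, for any projection $a\in\mathcal{P}_{\infty}(A)$ and $t\in\Gamma$, the (amplified) canonical unitary gives $\iota(\alpha_{t}(a))=u_{t}\iota(a)u_{t}^{*}$, so $\iota(a)$ and $\iota(\alpha_{t}(a))$ are Murray--von Neumann equivalent in matrices over $A\rtimes_{\lambda}\Gamma$. Consequently $\iota_{*}\circ\hat\alpha_{t}=\iota_{*}$ on $K_{0}(A)$. Pushing the two displayed relations forward under $\iota_{*}$ and subtracting to eliminate the common sum $\sum\iota_{*}([p_{j}]_{0})$ yields
\[
(k-l)[\iota(p)]_{0}+[\iota(r)]_{0}+[\iota(s)]_{0}=0\quad\text{in}\quad K_{0}(A\rtimes_{\lambda}\Gamma).
\]

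Let $P:=\iota(p)^{\oplus(k-l)}\oplus\iota(r)\oplus\iota(s)$. Since $[P]_{0}=0$, the Grothendieck construction applied to $V(A\rtimes_{\lambda}\Gamma)$ supplies a projection $E$ with $P\oplus E\sim E$ in some $M_{N}(A\rtimes_{\lambda}\Gamma)$. Because $k-l\geq 1$ and $p\neq 0$, the injectivity of $\iota$ forces $P\neq 0$, so $E$ sits as a proper subprojection of $P\oplus E$. Hence $P\oplus E$ is an infinite projection in $M_{N}(A\rtimes_{\lambda}\Gamma)$, which drags $1_{M_{N}(A\rtimes_{\lambda}\Gamma)}$ into being infinite and contradicts stable finiteness of the crossed product.

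The only real conceptual content is the identity $\iota_{*}\circ\hat\alpha_{t}=\iota_{*}$, which is the $K$-theoretic shadow of the fact that $\Gamma$ acts by \emph{inner} automorphisms on $A\rtimes_{\lambda}\Gamma$; once this is in hand, the rest is a routine application of the definition of $K_{0}$ via the Grothendieck map and the elementary theory of finite projections.
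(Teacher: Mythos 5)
Your argument is correct and is essentially the same as the paper's: both hinge on the identity $\hat\iota\circ\hat\alpha_{t}=\hat\iota$ coming from $\iota(\alpha_{t}(a))=u_{t}\iota(a)u_{t}^{*}$, push the paradoxical relations forward along the canonical embedding, and contradict stable finiteness of $A\rtimes_{\lambda}\Gamma$. The only difference is cosmetic: you unpack the final step (the Grothendieck relation $[P]_{0}=0$ forcing an infinite projection) explicitly, whereas the paper compresses it into the remark that stable finiteness forces $\hat\iota(x)=0$ and hence $p=0$.
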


\begin{proof} Suppose on the contrary that $0\neq [p]_0:=x\in K_{0}(A)^{+}$ is $(\Gamma,k,l)$ paradoxical for some integers $k>l>0$ where $p\in\mathcal{P}_{m}(A)$. We then have elements $x_{1},\dots, x_{n}$ in $K_{0}(A)^{+}$ and $t_{1},\dots, t_{n}\in\Gamma$ with
\[\sum_{j=1}^{n}x_{j}\geq kx\qquad \mbox{and}\qquad \sum_{j=1}^{n}\hat\alpha_{t_{j}}(x_{j})\leq lx.\]

If $\iota: A\hookrightarrow A\rtimes_{\lambda}\Gamma$, $\iota:a\mapsto au_{e}$, denotes the canonical embedding, apply $\hat\iota:K_{0}(A)^{+}\rightarrow K_{0}(A\rtimes_{\lambda}\Gamma)^{+}$ which is order preserving to obtain
\[k\hat\iota(x)=\hat\iota(kx)\leq\hat\iota\big(\sum_{j=1}^{n}x_{j}\big)=\sum_{j=1}^{n}\hat\iota(x_{j})
=\sum_{j=1}^{n}\hat\iota\hat\alpha_{t_{j}}(x_{j})=\hat\iota\big(\sum_{j=1}^{n}\hat\alpha_{t_{j}}(x_{j})\big)
\leq\hat\iota(lx)=l\hat\iota(x).\]
Here we used the fact that for a projection $q$ in $A$ and $s\in\Gamma$ we have
\begin{align*}
\hat\iota([q]_{K_{0}(A)})&=[\iota(q)]_{K_{0}(A\rtimes_{\lambda}\Gamma)}=[u_squ_s^*]_{K_{0}(A\rtimes_{\lambda}\Gamma)}
=[\alpha_{s}(q)]_{K_{0}(A\rtimes_{\lambda}\Gamma)}\\&=\hat\iota[\alpha_{s}(q)]_{K_{0}(A)}=\hat\iota\hat\alpha_{s}([q]_{K_{0}(A)}),
\end{align*}
so that $\hat\iota=\hat\iota\hat\alpha_{s}$ agree as maps $K_{0}(A)^{+}\rightarrow K_{0}(A\rtimes_{\lambda}\Gamma)^{+}$.

The fact that $A\rtimes_{\lambda}\Gamma$ is stably finite now implies that $\hat\iota(x)=0$,which means that $\iota(p)=0$, so $p=0$, a contradiction.
\end{proof}

\subsection{The type semigroup}

We wish to establish a converse to Proposition~\ref{SF implies CNP}. For this we shall need more machinery. Analogous to the type semigroup of a general group action (see~\cite{Wa}), we associate to each suitable \cstar-system $(A,\Gamma,\alpha)$  a preordered abelian monoid $S(A,\Gamma,\alpha)$ which correctly reflects the above notion of paradoxicality in $K_{0}(A)$, and then inevitably resort to Tarski's result (Theorem~\ref{Tarski} below) tying the existence of states on $S(A,\Gamma,\alpha)$ to non-paradoxicality. We embark on the details.

Let us first recall the notion of equidecomposability for group actions and the construction of the type semigroup. Suppose a group $\Gamma$ acts on a set $X$, and let $\mathcal{C}$ be a $\Gamma$-invariant subalgebra of the power set $\mathcal{P}(X)$. Orthogonality is then built in as we enlarge the action as follows. Let $Y=X\times\mathbb{N}_0$, and $G=\Gamma\times\Perm(\mathbb{N}_0)$ where $\mathbb{N}_0=\mathbb{N}\cup\{0\}$. We then have a canonical action $G\curvearrowright Y$ given by
\[(t,\sigma).(x,n)=(t.x,\sigma(n)).\]
For a set $E\subset Y$, and $j\in\mathbb{N}_{0}$ the \emph{jth level} of $E$ is the set $E_j=\{x\in X : (x,j)\in E\}$. We say that $E$ is \emph{bounded} if only finitely many levels $E_j$ are non-empty. Now consider the algebra of $G$-invariant  subsets
\[S(X,\mathcal{C})=\{E\subset Y : E\quad\text{is bounded and}\quad E_j\in\mathcal{C},\ \forall j\in\mathbb{N}_{0}\}.\]
Subsets $E,F\in S(X,\mathcal{C})$ are said to be \emph{$G$-equidecomposable}, and we write $E\sim_{G}F$, if there are $E_{1},\dots,E_{n}\in S(X,\mathcal{C})$, and $g_{1},\dots,g_{n}\in G$ such that:
\[E=\bigsqcup_{j=1}^{n}E_{j},\quad\text{and}\quad F=\bigsqcup_{j=1}^{n}g_j.E_{j}.\]
The notation $\sqcup$ is used to emphasize the fact that the partitioning sets are disjoint. Reflexivity and symmetry of the relation $\sim_{G}$ are straightforward, and transitivity follows from taking refined partitions. We quotient out by the equivalence relation $\sim_{G}$, setting
\[S(X,\Gamma,\mathcal{C}):=S(X,\mathcal{C})/\sim_{G},\]
and write $[E]$ for the equivalence class of $E\in S(X,\mathcal{C})$. Addition is then defined on classes via
\[\bigg[\bigcup_{j=1}^{n}E_j\times\{j\}\bigg]+\bigg[\bigcup_{i=1}^{m}F_i\times\{i\}\bigg]
=\bigg[\bigcup_{j=1}^{n}E_j\times\{j\}\cup \bigcup_{i=1}^{m}F_i\times\{n+j\}\bigg].\]
A little work shows that addition is well defined and $[\emptyset]$ is a neutral element. Endowed with the algebraic ordering, $S(X,\Gamma,\mathcal{C})$ has the structure of a preordered abelian monoid, often referred to as the type semigroup~\cite{Wa}.

We aim to construct a similar monoid for noncommutative \cstar-systems $(A,\Gamma,\alpha)$, at least in the presence of sufficiently many projections. The philosophy is that elements of the positive cone $K_{0}(A)^+$ would represent our ``subsets'' as it were, and the idea of refined partitions is reflected by suitable refinement properties displayed in the additive structure of $K_{0}(A)^{+}$. If we are to translate the notion of equidecomposability to the $K_{0}$-setting, we shall require that $A$ be an algebra for which the monoid $K_{0}(A)^{+}$ has the the Riesz refinement property. This discussion thus motivates the following definition.

\begin{defn} Let $A$ be a  $C^{*}$-algebra, $\Gamma$ a discrete group, and let $\alpha:\Gamma\rightarrow\Aut(A)$ an action. We define a relation on $K_{0}(A)^{+}$ as follows:
\[x\sim_{\alpha} y\quad (x,y\in K_{0}(A)^{+})\]
\[\Longleftrightarrow \]
\[\exists\  \{u_{j}\}_{j=1}^{k}\subset K_{0}(A)^{+}, \ \{t_{j}\}_{j=1}^{k}\subset\Gamma,\quad\mbox{such that}\quad \sum_{j=1}^{k}u_{j}=x\quad\mbox{and}\quad \sum_{j=1}^{k}\hat\alpha_{t_{j}}(u_{j})=y.\]
\end{defn}

\begin{lem} If $A$ is a stably finite C*-algebra such that $K_{0}(A)^+$ has the Riesz refinement property, then $\sim_{\alpha}$ as defined above is an equivalence relation.
\end{lem}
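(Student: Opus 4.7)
The plan is to verify reflexivity, symmetry, and transitivity directly from the definition, with Riesz refinement playing the decisive role in transitivity.

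Reflexivity is immediate: given $x\in K_{0}(A)^{+}$, take the trivial one-term decomposition $k=1$, $u_{1}=x$ and $t_{1}=e$, so that $\hat\alpha_{e}(x)=x$ and $x\sim_{\alpha}x$.

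For symmetry, suppose $x\sim_{\alpha}y$ witnessed by $\{u_{j}\}_{j=1}^{k}\subset K_{0}(A)^{+}$ and $\{t_{j}\}_{j=1}^{k}\subset\Gamma$. Set $v_{j}=\hat\alpha_{t_{j}}(u_{j})\in K_{0}(A)^{+}$ (positivity is preserved since $\hat\alpha_{t_{j}}$ is an order automorphism of $K_{0}(A)$) and $s_{j}=t_{j}^{-1}$. Then $\sum_{j}v_{j}=y$ and
\[\sum_{j=1}^{k}\hat\alpha_{s_{j}}(v_{j})=\sum_{j=1}^{k}\hat\alpha_{t_{j}^{-1}}\hat\alpha_{t_{j}}(u_{j})=\sum_{j=1}^{k}u_{j}=x,\]
so $y\sim_{\alpha}x$.

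Transitivity is the only step that genuinely uses the hypotheses on $A$. Assume $x\sim_{\alpha}y$ via $\{u_{j}\}_{j=1}^{k}$, $\{t_{j}\}_{j=1}^{k}$ and $y\sim_{\alpha}z$ via $\{v_{i}\}_{i=1}^{m}$, $\{s_{i}\}_{i=1}^{m}$. Then
\[\sum_{j=1}^{k}\hat\alpha_{t_{j}}(u_{j})=y=\sum_{i=1}^{m}v_{i}.\]
Since $A$ is stably finite, $K_{0}(A)^{+}$ is a partially ordered abelian monoid, and by hypothesis it has the Riesz refinement property. Applying refinement to the above common sum yields elements $\{w_{ij}\}_{i,j}\subset K_{0}(A)^{+}$ satisfying $\sum_{i=1}^{m}w_{ij}=\hat\alpha_{t_{j}}(u_{j})$ and $\sum_{j=1}^{k}w_{ij}=v_{i}$. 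Now push back via $\hat\alpha_{t_{j}^{-1}}$ and set $p_{ij}=\hat\alpha_{t_{j}^{-1}}(w_{ij})\in K_{0}(A)^{+}$. Then
\[\sum_{i=1}^{m}p_{ij}=\hat\alpha_{t_{j}^{-1}}\Big(\sum_{i=1}^{m}w_{ij}\Big)=\hat\alpha_{t_{j}^{-1}}\hat\alpha_{t_{j}}(u_{j})=u_{j},\]
whence $\sum_{i,j}p_{ij}=\sum_{j}u_{j}=x$. On the other hand, with group elements $r_{ij}:=s_{i}t_{j}$ we compute
\[\sum_{i,j}\hat\alpha_{r_{ij}}(p_{ij})=\sum_{i,j}\hat\alpha_{s_{i}}\hat\alpha_{t_{j}}\hat\alpha_{t_{j}^{-1}}(w_{ij})=\sum_{i=1}^{m}\hat\alpha_{s_{i}}\Big(\sum_{j=1}^{k}w_{ij}\Big)=\sum_{i=1}^{m}\hat\alpha_{s_{i}}(v_{i})=z,\]
so that $\{p_{ij}\}$ and $\{r_{ij}\}$ witness $x\sim_{\alpha}z$.

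The only non-routine step is transitivity, where without Riesz refinement there is no way to ``intersect'' the two equidecompositions; this mirrors the classical situation where the type semigroup is built by taking common refinements of partitions. Stable finiteness is used only to guarantee that $K_{0}(A)^{+}$ is a bona fide positive cone so that the refining elements $w_{ij}$ lie in $K_{0}(A)^{+}$ and that the action $\hat\alpha$ restricts to order automorphisms of $K_{0}(A)^{+}$.
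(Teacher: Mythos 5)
Your proof is correct and follows essentially the same route as the paper: the trivial witness for reflexivity, inverting the group elements for symmetry, and for transitivity applying Riesz refinement to the two expressions for $y$ and then transporting the refining elements $w_{ij}$ back by $\hat\alpha_{t_j^{-1}}$ with group elements $s_i t_j$. No gaps; the argument matches the paper's up to relabeling of indices.
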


\begin{proof} Let $x,y\in K_{0}(A)^{+}$. Clearly $x\sim_{\alpha} x$, simply take $u_{1}=x$ and $t_{1}=e$. If $x\sim_{\alpha} y$, via the decomposition $x=\sum_{j=1}^{k}u_{j}$ and $y=\sum_{j=1}^{k}\hat\alpha_{t_{j}}(u_{j})$, set $v_{j}=\hat\alpha_{t_{j}}(u_{j})$ and $s_{j}=t_{j}^{-1}$ for $j=1,\dots k$. It clearly follows that

\[\sum_{j=1}^{k}v_{j}=y\quad\mbox{and}\quad \sum_{j=1}^{k}\hat\alpha_{s_{j}}(v_{j})=\sum_{j=1}^{k}\hat\alpha_{t_{j}^{-1}}(\hat\alpha_{t_{j}}(u_{j}))=\sum_{j=1}^{k}u_{j}=x\]
whence $y\sim_{\alpha} x$. Transitivity is a little harder, and here is where the fact that $K_{0}(A)^{+}$ has the Riesz refinement property will surface. To that end, suppose $x\sim_{\alpha} y\sim_{\alpha} z$ via
\[x=\sum_{j=1}^{k}u_{j},\quad y=\sum_{j=1}^{k}\hat\alpha_{t_{j}}(u_{j})\quad\mbox{and}\quad  y=\sum_{j=1}^{l}v_{j},\quad z=\sum_{j=1}^{l}\hat\alpha_{s_{j}}(v_{j}).\]
Since $\sum_{j=1}^{k}\hat\alpha_{t_{j}}(u_{j})=\sum_{j=1}^{l}v_{j}$ and $K_{0}(A)$ has the interpolation properties, there are elements $\{w_{ij} : 1\leq j\leq l, 1\leq i\leq k\}\subset K_{0}(A)^{+}$ such that
\[\sum_{j=1}^{l}w_{ij}=\hat\alpha_{t_{i}}(u_{i})\quad\mbox{and}\quad \sum_{i=1}^{k}w_{ij}=v_{j}.\]
We then compute
\[\sum_{i,j}\hat\alpha_{s_{j}t_{i}}(\hat\alpha_{t_{i}^{-1}}(w_{ij}))=\sum_{i,j}\hat\alpha_{s_{j}}(w_{ij})=\sum_{j}\hat\alpha_{s_{j}}\big(\sum_{i}w_{ij}\big)=\sum_{j}\hat\alpha_{s_{j}}(v_{j})=z,\]
while
\[\sum_{i,j}\hat\alpha_{t_{i}^{-1}}(w_{ij})=\sum_{i}\hat\alpha_{t_{i}^{-1}}\big(\sum_{j}w_{ij}\big)=
\sum_{i}\hat\alpha_{t_{i}^{-1}}(\hat\alpha_{t_{i}}(u_{i}))=\sum_{i}u_{i}=x. \]
which gives the desired decomposition for $x\sim_{\alpha} z$.
\end{proof}

We can now make the following definition.

\begin{defn} Let $A$ be a \cstar-algebra such that $K_0(A)^+$ has the Riesz refinement property. Let $\Gamma\rightarrow\Aut(A)$ be an action. We set $S(A,\Gamma,\alpha):=K_{0}(A)^{+}/\sim_{\alpha}$,
and write $[x]_{\alpha}$ for the equivalence class with representative $x\in K_0(A)^+$.
\end{defn}

For a general group action $G\curvearrowright X$ on an arbitrary set, it is not difficult to see that we may define addition on equidecomposability classes. Indeed if $E,F,H,K\subset X$ with $E\cap H=\emptyset$, $F\cap K=\emptyset$, $E\sim F$ and $H\sim K$ then it is routine to verify that $(E\sqcup H)\sim (F\sqcup K)$. This gives an idea for a well defined additive structure on $S(A,\Gamma,\alpha)$. Define addition on classes simply by $[x]_\alpha+[y]_\alpha:=[x+y]_\alpha$ for $x,y$ in $K_{0}(A)^{+}$. It is routine to check that this operation is well defined; indeed if $z\sim_{\alpha} x$ via $x=\sum_{j=1}^{k}u_{j}$ and $z=\sum_{j=1}^{k}\hat\alpha_{t_{j}}(u_{j})$,then
\[ [z]_\alpha+[y]_\alpha=[z+y]_\alpha=\bigg[\sum_{j=1}^{k}\hat\alpha_{t_{j}}(u_{j})+y\bigg]_{\alpha}
=\bigg[\sum_{j=1}^{k}u_{j}+y\bigg]_{\alpha}=[x+y]_{\alpha}=[x]_{\alpha}+[y]_{\alpha}.\]

We make a few elementary observations concerning $S(A,\Gamma,\alpha)$ when $A$ is stably finite. Firstly, $S(A,\Gamma,\alpha)$  is not just a semigroup but an abelian monoid as $[0]_{\alpha}$ is clearly the neutral additive element. Impose the algebraic ordering on $S(A,\Gamma,\alpha)$, that is, set $[x]_{\alpha}\leq [y]_{\alpha}$ if there is a $z\in K_{0}(A)^{+}$ with $[x]_{\alpha}+[z]_{\alpha}=[y]_{\alpha}$. This gives $S(A,\Gamma,\alpha)$ the structure of an abelian preordered monoid. Notice at once that if $x,y\in K_{0}(A)^{+}$ with $x\leq y$ (in the ordering of $K_{0}(A)$) then $[x]_\alpha\leq[y]_{\alpha}$ in $S(A,\Gamma,\alpha)$. To see this, $x\leq y$ implies $y-x:=z\in K_{0}(A)^{+}$, so $[y]_\alpha=[x+z]_\alpha=[x]_\alpha+[z]_\alpha$ which gives $[x]_\alpha\leq[y]_\alpha$. Next, we observe that if $[x]_\alpha=[0]_\alpha$, for some $x$ in $K_{0}(A)^{+}$, then in fact $x=0$. Indeed, say $x=\sum_{i}u_{i}$, and $\sum_{i}\hat\alpha_{t_{i}}(u_{i})=0$ for some elements $t_{i}\in\Gamma$ and $u_{i}\in K_{0}(A)^{+}$, then for each $i$, $\hat\alpha_{t_{i}}(u_{i})=0$ and so $u_{i}=0$ which gives $x=0$. Here we used the important fact that for stably finite algebras $A$, $K_{0}(A)^{+}\cap (-K_{0}(A)^{+})=(0)$. All together, there is an order preserving, faithful, monoid homomorphism
\[\rho: K_{0}(A)^{+}\rightarrow S(A,\Gamma,\alpha)\quad\mbox{given by}\quad \rho(g)=[g]_\alpha.\]

This next fact shows that we have in fact constructed a noncommutative analogue of the type semigroup construction studied in~\cite{Wa}.

\begin{prop} Let $X$ be the Cantor set, $\Gamma$ a discrete group, and $\Gamma\curvearrowright X$ a continuous action with corresponding action $\alpha:\Gamma\rightarrow\Aut(C(X))$. Then the type semigroup $S(X,\Gamma,\mathcal{C})$ is isomorphic to $S(C(X),\Gamma,\alpha)$ constructed above.
\end{prop}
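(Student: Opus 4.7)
The plan is to build an explicit monoid isomorphism $\Phi:S(X,\Gamma,\mathcal{C})\to S(C(X),\Gamma,\alpha)$ using the identification $K_0(C(X))^+\cong C(X;\mathbb{Z})^+$ (via the dimension map), under which every positive element is a finite $\mathbb{N}$-linear combination of characteristic functions of clopen subsets of $X$. The bridging idea is that a bounded level-set $E=\bigsqcup_{j\geq 0} E_j\times\{j\}\in S(X,\mathcal{C})$ should be sent to the class of its ``counting function'' $f_E:=\sum_{j}\mathbf{1}_{E_j}\in C(X;\mathbb{Z})^+$; conversely, any $f\in C(X;\mathbb{Z})^+$ can be written as $f=\sum_{j=0}^{N}\mathbf{1}_{F_j}$ with $F_j:=\{x:f(x)\geq j+1\}\in\mathcal{C}$, giving a preimage set $\bigsqcup_j F_j\times\{j\}$.

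First, I would define $\Phi([E]):=[f_E]_\alpha$ and check it is well defined. If $E\sim_G F$ via $E=\bigsqcup_k E^{(k)}$ and $F=\bigsqcup_k g_k.E^{(k)}$ with $g_k=(t_k,\sigma_k)\in\Gamma\times\Perm(\mathbb{N}_0)$, then $f_E=\sum_k f_{E^{(k)}}$ and, because summing over levels kills the $\sigma_k$ component, $f_{g_k.E^{(k)}}=\alpha_{t_k}(f_{E^{(k)}})$. Thus $f_E\sim_\alpha f_F$, so $\Phi$ descends to equivalence classes. The monoid homomorphism property follows directly from the shift-by-$n$ definition of addition in $S(X,\Gamma,\mathcal{C})$, which corresponds to the ordinary sum of the associated counting functions.

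Surjectivity is immediate: any $f\in C(X;\mathbb{Z})^+$ gives $f=f_E$ for $E=\bigsqcup_j F_j\times\{j\}$ as above. For order preservation, both preorders are algebraic, and one checks on representatives that $[E]\leq[F]$ if and only if $F\sim_G E\sqcup H$ for some $H$, if and only if $[f_E]_\alpha+[f_H]_\alpha=[f_F]_\alpha$.

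The main obstacle is injectivity, where we must promote an $\sim_\alpha$-decomposition at the level of functions back to a set-theoretic equidecomposability in $Y=X\times\mathbb{N}_0$. Suppose $f_E\sim_\alpha f_F$ via $f_E=\sum_{j=1}^k u_j$ and $f_F=\sum_{j=1}^k \hat\alpha_{t_j}(u_j)$ with $u_j\in C(X;\mathbb{Z})^+$. Write each $u_j=\sum_{m=0}^{M_j}\mathbf{1}_{U_{j,m}}$ with clopen $U_{j,m}$. By Riesz refinement in $C(X;\mathbb{Z})^+$ applied twice (once to match the ``step'' decomposition of $f_E$ against $\sum_j u_j$, and once for $f_F$ against $\sum_j \alpha_{t_j}(u_j)$), we may refine the levels $E_i$ into clopen pieces matching the $U_{j,m}$, and similarly for the levels of $F$. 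Each resulting piece $V\subset X$ occurs in $E$ at some level $i$ and in $F$ at some level $i'$; using a permutation $\sigma\in\Perm(\mathbb{N}_0)$ that sends $i\mapsto i'$ together with $t_j\in\Gamma$, we construct a $G$-element $g=(t_j,\sigma)$ that carries $V\times\{i\}\subset E$ to $t_j.V\times\{i'\}\subset F$. Since $\mathbb{N}_0$ is infinite and the piece count is finite, enough permutations can always be arranged to keep the pieces of $E$ mutually disjoint and their images covering $F$ disjointly. This exhibits $E\sim_G F$ and closes the argument.
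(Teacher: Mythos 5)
Your map $\Phi$ is the same map the paper calls $\varphi$ (level sets to counting functions), and your well-definedness, additivity and surjectivity arguments coincide with the paper's. The difference is structural: the paper never proves injectivity of this map directly. It first builds the map in the opposite direction, $f\mapsto\big[\bigsqcup_j E_j\times\{j\}\big]$ for any representation $f=\sum_j\mathbbm{1}_{E_j}$, whose well-definedness rests on a separate Claim (two clopen-indicator representations of the same function yield $G$-equidecomposable level sets, proved by passing to a common disjoint clopen refinement and counting multiplicities), checks it is $\sim_\alpha$-invariant, and then verifies that your $\Phi$ is a left inverse of it. You instead attack injectivity of $\Phi$ head-on, and that is where your sketch has a genuine gap.

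The gap is in the matching step. The two Riesz refinements you invoke are independent of each other: the first produces clopen pieces $V_{i,(j,m)}$ partitioning the levels $E_i$ and the sets $U_{j,m}$, the second produces pieces $W_{i',(j,m)}$ partitioning the levels $F_{i'}$ and the sets $t_j.U_{j,m}$, but the translate $t_j.V_{i,(j,m)}$ need not be one of the $W$'s and in general meets several different levels $F_{i'}$. So your assertion that ``each resulting piece $V$ occurs in $E$ at some level $i$ and in $F$ at some level $i'$'' is false as stated, and the piece-by-piece assignment of a single $(t_j,\sigma)$ breaks down; the closing remark about having enough permutations does not address this, since disjointness of the images is a bookkeeping issue about the pieces, not about the choice of $\sigma$. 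The repair is routine but must be made explicit: refine once more, replacing $V_{i,(j,m)}$ by the pieces $V_{i,(j,m)}\cap t_j^{-1}.W_{i',(j,m)}$ and sending each to level $i'$ via $(t_j,\sigma)$ with $\sigma(i)=i'$, after which the images do tile $F$; alternatively, argue as the paper does, proving the Claim above and then chaining $E\sim_G\bigsqcup_{j,m}U_{j,m}\times\{\cdot\}\sim_G\bigsqcup_{j,m}t_j.U_{j,m}\times\{\cdot\}\sim_G F$ by transitivity of $\sim_G$, which avoids any compatibility problem between the two refinements.
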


\begin{proof} Let $f\in K_{0}(C(X))^+=C(X;\mathbb{Z})^+$, then we can write $f=\sum_{j=1}^{n}\mathbbm{1}_{E_{j}}$ where the $E_{j}$ are clopen subsets of $X$. Note that such a representation is not unique.

\noindent \textbf{Claim.} Suppose $f=\sum_{j=1}^{n}\mathbbm{1}_{E_{j}}=\sum_{j=1}^{m}\mathbbm{1}_{F_{j}}$, then
\[\bigsqcup_{j=1}^{n}E_{j}\times\{j\}:=E\sim F:=\bigsqcup_{j=1}^{m}F_{j}\times\{j\},\]
so that $[E]=[F]$ in the type semigroup $S(X,\Gamma,\mathcal{C})$.

It is clear that $\cup_{j=1}^{n}E_j=\cup_{j=1}^{m}F_j$. By choosing a common clopen refinement, we may assume that there are disjoint clopen sets $H_1,\dots,H_r$, where $r\geq n,m$, such that each $E_{j}$ and each $F_{j}$ is a union of distinct $H_i$. For each $i=1,\dots,r$ set the multiplicities of the $H_i$ as
\[n_{i}:=\big|\{j : H_{i}\subset E_j\}\big|=\big|\{j : H_{i}\subset F_j\}\big|.\]
In this case we have $f=\sum_{i=1}^{r}n_{i}\mathbbm{1}_{H_i}$. For each pair $(i,j)$ set
$$
\Delta_{i,j}=
\begin{cases}
H_i, & \text{ if } H_{i}\subset E_j\\
\emptyset & \text{ if } H_{i}\cap E_{j}=\emptyset
\end{cases}
$$
With a $j$ fixed we run through all the $H_i$ and get $\bigsqcup_{i=1}^{r}\Delta_{i,j}\times\{j\}=E_j\times\{j\}$. Then
\[E=\bigsqcup_{j=1}^{n}E_j\times\{j\}=\bigsqcup_{j=1}^{n}\bigsqcup_{i=1}^{r}\Delta_{i,j}\times\{j\}
=\bigsqcup_{i=1}^{r}\bigsqcup_{j=1}^{n}\Delta_{i,j}\times\{j\}\sim\bigsqcup_{i=1}^{r}\bigsqcup_{j=1}^{n_i}H_i\times\{j\}:=H.\]
By a similar argument $F\sim H$, and transitivity gives $E\sim F$ and the Claim is thus proved.

We now define a map $\psi:K_{0}(C(X))^+\rightarrow S(X,\Gamma,\mathcal{C})$ by
\[\psi(f)=\bigg[\bigsqcup_{j=1}^{n}E_j\times\{j\}\bigg]\]
where $f$ has representation $f=\sum_{j=1}^{n}\mathbbm{1}_{E_j}$ with $E_j\subset X$ clopen. Thanks to the Claim, this map is well defined as any representation of $f$ will do. Also, it is routine to check that $\psi$ is additive and onto. Moreover, $\psi$ is invariant under the equivalence $\sim_\alpha$. To see this, suppose $f,g\in K_{0}(C(X))^+$ and $f\sim_{\alpha}g$. By definition and by writing members of $K_{0}(C(X))^+$ as sums of indicator functions on clopen sets we can find clopen sets $E_1,\dots,E_n\in\mathcal{C}$ and group elements $t_1,\dots,t_n\in\Gamma$ with
\[f=\sum_{j=1}^{n}\mathbbm{1}_{E_j},\quad \text{and }\quad g=\sum_{j=1}^{n}\mathbbm{1}_{t_j.E_j}.\]
Since $\bigsqcup_{j=1}^{n}E_j\times\{j\}\sim\bigsqcup_{j=1}^{n}t_j.E_j\times\{j\}$ we get that $\psi(f)=\psi(g)$. The map $\psi$ thus descends to a surjective monoid homomorphism $\overline{\psi}:S(C(X),\Gamma,\alpha)\rightarrow S(X,\Gamma,\mathcal{C})$ with $\overline{\psi}([f]_\alpha)=\psi(f)$. To establish injectivity we construct a left inverse $\varphi:S(X,\Gamma,\mathcal{C})\rightarrow S(C(X),\Gamma,\alpha)$ as follows. Set
\[\varphi\bigg(\bigg[\bigsqcup_{j=1}^{n}E_j\times\{j\}\bigg]\bigg)=\bigg[\sum_{j=1}^{n}\mathbbm{1}_{E_j}\bigg]_{\alpha}.\]
To show that $\varphi$ is well defined, suppose $E=\bigsqcup_{j=1}^{n}E_j\times\{j\}\sim F=\bigsqcup_{j=1}^{m}F_j\times\{j\}$, then there exist $l\in\mathbb{N}$, $C_k\in\mathcal{C}$, $t_k\in\Gamma$ and natural numbers $n_k,m_k$ for $k=1,\dots,l$, such that
\[E=\bigsqcup_{k=1}^{l}C_k\times\{n_k\},\qquad F=\bigsqcup_{k=1}^{l}t_k.C_k\times\{m_k\}.\]
For each fixed $j$, we see that $\bigsqcup_{\{k:\ n_k=j\}}C_k=E_j$, so $\sum_{\{k :\ n_k=j\}}\mathbbm{1}_{C_k}=\mathbbm{1}_{E_j}$. Therefore
\[\sum_{j=1}^{n}\mathbbm{1}_{E_j}=\sum_{j=1}^{n}\sum_{\{k:\ n_k=j\}}\mathbbm{1}_{C_k}=\sum_{k=1}^{l}\mathbbm{1}_{C_k}\sim_{\alpha}\sum_{k=1}^{l}\mathbbm{1}_{t_k.C_k}=\sum_{j=1}^{n}\mathbbm{1}_{F_j}.\]
where the last equality follows from same reasoning. It follows that $\varphi([E])=\varphi([F])$. Also $\varphi$ is clearly additive and onto. For an element $[f]_{\alpha}\in S(C(X),\Gamma,\alpha)$, where $f$ has representation $f=\sum_{j=1}^{n}\mathbbm{1}_{E_{j}}$, we see that
\[\varphi\circ\overline{\psi}([f]_{\alpha})=\varphi\circ\psi(f)=\varphi\bigg(\bigg[\bigsqcup_{j=1}^{n}E_j\times\{j\}\bigg]\bigg)
=\bigg[\sum_{j=1}^{n}\mathbbm{1}_{E_j}\bigg]_{\alpha}=[f]_{\alpha}.\]
We conclude that $\overline{\psi}$ is a monoid isomorphism. Since both monoids are preordered with the algebraic ordering $\overline{\psi}$ is actually an isomorphism of preordered monoids.
\end{proof}

Next we look at how $(\Gamma,k,l)$-paradoxically  is reflected in our monoid $S(A,\Gamma,\alpha)$.

\begin{lem}\label{klpar} Let $A$ be a stably finite C*-algebra such that $K_{0}(A)^{+}$ has Riesz refinement, and let $\alpha:\Gamma\rightarrow\Aut(A)$ be an action. Then an element $0\neq x\in K_{0}(A)^{+}$ is $(\Gamma,k,l)$-paradoxical if and only if $k[x]\leq l[x]$ in $S(A,\Gamma,\alpha)$.
\end{lem}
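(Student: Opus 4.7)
The plan is to show both implications by unwinding the definitions directly. The key observation is that the algebraic ordering in $K_0(A)^+$ descends to an ordering on $S(A,\Gamma,\alpha)$ (already noted in the paper): if $a \leq b$ in $K_0(A)^+$, then $[a]_\alpha \leq [b]_\alpha$ in $S(A,\Gamma,\alpha)$. Combined with the fact that the relation $\sim_\alpha$ is tailored precisely to equate $\sum_j u_j$ with $\sum_j \hat\alpha_{t_j}(u_j)$, the two conditions should translate into each other almost tautologically.

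For the forward implication, suppose $x$ is $(\Gamma,k,l)$-paradoxical via $x_1,\dots,x_n \in K_0(A)^+$ and $t_1,\dots,t_n \in \Gamma$ with $\sum_j x_j \geq kx$ and $\sum_j \hat\alpha_{t_j}(x_j) \leq lx$. Setting $y := \sum_j x_j$ and $w := \sum_j \hat\alpha_{t_j}(x_j)$, the very definition of $\sim_\alpha$ yields $y \sim_\alpha w$, so $[y]_\alpha = [w]_\alpha$ in $S(A,\Gamma,\alpha)$. Then the chain
\[
k[x]_\alpha = [kx]_\alpha \leq [y]_\alpha = [w]_\alpha \leq [lx]_\alpha = l[x]_\alpha
\]
follows from the two order-relations $kx \leq y$ and $w \leq lx$ in $K_0(A)^+$.

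For the reverse implication, assume $k[x]_\alpha \leq l[x]_\alpha$ in $S(A,\Gamma,\alpha)$. By the definition of the algebraic preorder there exists $z \in K_0(A)^+$ with $[kx+z]_\alpha = [lx]_\alpha$, i.e.\ $kx + z \sim_\alpha lx$. Unwinding $\sim_\alpha$ produces $u_1,\dots,u_n \in K_0(A)^+$ and $t_1,\dots,t_n \in \Gamma$ with
\[
\sum_{j=1}^n u_j = kx + z, \qquad \sum_{j=1}^n \hat\alpha_{t_j}(u_j) = lx.
\]
Then $x_j := u_j$ witness $(\Gamma,k,l)$-paradoxicality of $x$: one has $\sum_j x_j = kx + z \geq kx$ and $\sum_j \hat\alpha_{t_j}(x_j) = lx \leq lx$.

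The main (minor) subtlety I expect is simply bookkeeping around the definition of the algebraic preorder on $S(A,\Gamma,\alpha)$ and confirming that $\sim_\alpha$-equivalence is compatible with it; these facts were already recorded earlier in the paper, so invoking them should let the proof proceed in a few short lines. Riesz refinement of $K_0(A)^+$ is needed only for $\sim_\alpha$ to be an equivalence relation (so that $S(A,\Gamma,\alpha)$ is well defined), and does not enter either direction of the argument explicitly.
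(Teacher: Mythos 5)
Your proof is correct and is essentially identical to the paper's argument: the forward direction uses the observation that $\sum_j x_j \sim_\alpha \sum_j \hat\alpha_{t_j}(x_j)$ together with the order-compatibility of $\rho$, and the reverse direction unwinds the algebraic preorder and the definition of $\sim_\alpha$ exactly as the paper does. Your closing remark that Riesz refinement enters only through the well-definedness of $S(A,\Gamma,\alpha)$ is also accurate.
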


\begin{proof} Suppose $0\neq x\in K_{0}(A)^{+}$ is $(\Gamma,k,l)$-paradoxical. Then $kx\leq\sum_{j=1}^{n}x_{j}$ and $\sum_{j=1}^{n}\hat\alpha_{t_{j}}(x_{j})\leq lx$ for some $x_{j}$ in $K_{0}(A)^{+}$ and $t_{j}$ in $\Gamma$. Then from our above remarks:
\[k[x]_\alpha=[kx]_\alpha\leq \bigg[\sum_{j=1}^{n}x_{j}\bigg]_\alpha=\bigg[\sum_{j=1}^{n}\hat\alpha_{t_{j}}(x_{j})\bigg]_\alpha\leq [lx]_\alpha=l[x]_\alpha.\]

Now assume $k[x]_\alpha\leq l[x]_\alpha$ for integers $k>l>0$. Then for some $z$ in $K_{0}(A)^{+}$ we have \[[kx+z]_\alpha=[kx]_\alpha+[z]_=k[x]_\alpha+[z]_\alpha=l[x]_\alpha=[lx]_\alpha\].
By definition there are elements $x_{1},\dots,x_{n}$ in $K_{0}(A)^{+}$ and $t_{1},\dots,t_{n}\in\Gamma$ with
\[kx\leq kx+z=\sum_{j=1}^{l}x_{j}\quad\mbox{and}\quad \sum_{j=1}^{l}\hat\alpha_{t_{j}}(x_{j})=lx,\]
which witnesses the $(\Gamma,k,l)$-paradoxicality of $x$. The proof is complete.
\end{proof}

Before going any further let us recall some terminology. Let $(W,\leq)$ be a preordered abelian monoid. For positive integers $k>l>0$, we say that an element $\theta\in W$ is \emph{$(k,l)$-paradoxical} provided that $k\theta\leq l\theta$. If $\theta$ fails to be paradoxical for all pairs of integers $k>l>0$, call $\theta$ \emph{completely non-paradoxical}. Note that $\theta$ is completely non-paradoxical if and only if $(n+1)\theta\nleq n\theta$ for all $n\in\mathbb{N}$. The above lemma basically states that in its setting, an element $x\in K_{0}(A)^{+}$ is completely non-paradoxical with respect to the action $\hat\alpha$ exactly when $[x]_\alpha$ is completely non-paradoxical in the preordered abelian monoid $S(A,\Gamma,\alpha)$. An element $\theta$ in $W$ is said to \emph{properly infinite} if $2\theta\leq\theta$, that is, if it is $(2,1)$-paradoxical. If every member of $W$ is properly infinite then $W$ is said to be \emph{purely infinite}. A \emph{state} on $W$ is a map $\nu:W\rightarrow[0,\infty]$ which is additive, respects the preordering $\leq$, and satisfies $\nu(0)=0$. If a state $\beta$ assumes a value other than $0$ or $\infty$, $\beta$ it said to be \emph{non-trivial}.  The monoid $W$ is said to be \emph{almost unperforated} if, whenever $\theta,\eta\in W$, and $n,m\in\mathbb{N}$ are such that $n\theta\leq m\eta$ and $n>m$, then $\theta\leq\eta$.

The following is a deep result of Tarski, which will be the main tool in establishing a converse to Proposition~\ref{Para1}. A proof can be found in~\cite{Wa}.

\begin{thm}\label{Tarski} Let $(W,+)$ be an abelian monoid equipped with the algebraic ordering, and let $\theta$ be an element of $W$. Then the following are equivalent:
\begin{enumerate}
\item $(n+1)\theta\nleq n\theta$ for all $n\in\mathbb{N}$, that is $\theta$ is completely non-paradoxical.
\item There is a non-trivial state $\nu: W\rightarrow[0,\infty]$ with $\nu(\theta)=1$.
\end{enumerate}
\end{thm}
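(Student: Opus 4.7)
The implication $(2)\Rightarrow(1)$ is immediate: if $\nu$ is a non-trivial state with $\nu(\theta)=1$, then any relation $(n+1)\theta\leq n\theta$ would force $n+1=\nu((n+1)\theta)\leq\nu(n\theta)=n$, which is absurd. So the substance is $(1)\Rightarrow(2)$, and for this I would follow the classical Tarski strategy of building $\nu$ by a Hahn--Banach-type extension from an upper ``capacity''.

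Define
\[
\mu(x)\;:=\;\inf\left\{\frac{m}{n}\;:\;m,n\in\mathbb{N},\ nx\leq m\theta\right\}\in[0,\infty],
\]
with the convention $\inf\emptyset=\infty$. I would first record three easy properties. The inequality $1\cdot\theta\leq 1\cdot\theta$ gives $\mu(\theta)\leq 1$. A simple ``common denominator'' manipulation shows $\mu$ is subadditive: if $nx\leq m\theta$ and $n'y\leq m'\theta$, then $nn'(x+y)\leq(mn'+nm')\theta$, so $\mu(x+y)\leq\mu(x)+\mu(y)$. Crucially, complete non-paradoxicality upgrades the first bound to equality: if $\mu(\theta)<1$, then $n\theta\leq m\theta$ for some $m<n$, so with $k:=n-1\geq m$ we have $(k+1)\theta=n\theta\leq m\theta\leq k\theta$, contradicting $(1)$. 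Hence $\mu(\theta)=1$.

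Next, I would extend the tautological partial state $t\theta\mapsto t$ on $\mathbb{N}\theta$ to all of $W$ by a transfinite (or countable, in the separable case) induction along an enumeration $W=\{\theta,x_{1},x_{2},\dots\}$, at each stage having defined an additive, order-preserving $\nu_{n}:M_{n}\to[0,\infty]$ on the submonoid $M_{n}$ generated by $\theta,x_{1},\dots,x_{n}$, with $\nu_{n}\leq\mu$ on $M_{n}$. At stage $n+1$ the candidate set for $\nu_{n+1}(x_{n+1})$ consists of those $c\in[0,\infty]$ such that the prescription $\nu_{n+1}(y+k x_{n+1})=\nu_{n}(y)+kc$ remains consistent with the preorder and stays dominated by $\mu$. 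Using the subadditivity of $\mu$ and the monoid version of the sandwich lemma, this candidate interval
\[
\sup\Big\{\tfrac{\nu_{n}(z)-\nu_{n}(y)}{k}: y+k x_{n+1}\geq z\Big\}\;\leq\; c\;\leq\;\inf\Big\{\tfrac{\mu(w)-\nu_{n}(v)}{j}: w\geq v+j x_{n+1}\Big\}
\]
is non-empty, so the induction proceeds. Taking $\nu=\bigcup_{n}\nu_{n}$ yields an additive, order-preserving $\nu:W\to[0,\infty]$ with $\nu(\theta)=1$, which is the desired non-trivial state.

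\textbf{The main obstacle.} The genuine difficulty lies in the inductive step: verifying that the Hahn--Banach sandwich interval for $\nu_{n+1}(x_{n+1})$ is never empty. In a vector space this is automatic from subadditivity, but the algebraic preorder on the monoid $W$ does not allow subtraction, so ``lower bounds'' and ``upper bounds'' for $c$ must be compared only through the additive relations available in $W$. Controlling this requires the precise interplay between $\mu$, the algebraic preorder, and the hypothesis $(n+1)\theta\nleq n\theta$, which is exactly the content of Tarski's theorem as treated in Wagon~\cite{Wa}. Everything else in the argument is bookkeeping.
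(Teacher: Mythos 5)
Your direction $(2)\Rightarrow(1)$ is fine, and the elementary facts you record about $\mu$ (subadditivity, and $\mu(\theta)=1$ via complete non-paradoxicality) are correct. But the proposal does not actually prove the hard direction $(1)\Rightarrow(2)$: the entire content of Tarski's theorem is precisely the step you flag as ``the main obstacle'' and then leave unargued, namely that at each stage of the extension the sandwich interval
\[
\sup\Big\{\tfrac{\nu_{n}(z)-\nu_{n}(y)}{k}\Big\}\;\leq\;\inf\Big\{\tfrac{\mu(w)-\nu_{n}(v)}{j}\Big\}
\]
is non-empty. Subadditivity of $\mu$ alone does not give this: in a monoid with only the algebraic preorder there is no subtraction, the lower bounds are generated by order relations inside the partial domain while the upper bounds come from $\mu$, and comparing them requires the genuinely combinatorial core of Tarski's argument (in Wagon this is handled by reducing to finitely generated subsemigroups via a compactness argument and then proving a nontrivial finite extension lemma; one also has to worry about expressions like $\nu_{n}(z)-\nu_{n}(y)$ when values are $0$ or $\infty$). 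Writing ``this is exactly the content of Tarski's theorem as treated in Wagon'' inside the inductive step is circular as a proof: you are invoking the theorem to prove the theorem. So as a standalone argument the proposal has a genuine gap — the key idea that makes the extension possible is missing, and everything before it really is bookkeeping.

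For comparison: the paper itself offers no proof at all, but simply cites Wagon's book for Theorem~\ref{Tarski}. So your sketch is compatible with the standard strategy behind the cited result, but if the intent was to supply a proof rather than a pointer to the literature, the nonemptiness of the extension interval (equivalently, the finite-stage extension lemma plus the compactness/induction that globalizes it) is exactly what still has to be proved, with the hypothesis $(n+1)\theta\nleq n\theta$ entering there and not only in the computation $\mu(\theta)=1$.
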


We mean to apply Theorem~\ref{Tarski} to our preordered monoid $S(A,\Gamma,\alpha)$. Note that such a $\nu$, which arises in the landscape of complete non-paradoxicality will not in general be finite on all of $S(A,\Gamma,\alpha)$. One needs the right condition on the action $\alpha$, or more precisely, $\hat\alpha$,  to guarantee finiteness everywhere. Suppose we considered $\theta=[u]_\alpha$ as in Theorem~\ref{Tarski}, where $u=[1]_0$ is the order unit in $K_{0}(A)$. If we compose the state $\nu$ with the the above $\rho:K_{0}(A)^{+}\rightarrow S(A,\Gamma,\alpha)$, this would give us, in a sense, an invariant `state' at the $K$-theoretic level, but perhaps not finitely valued everywhere, but with a finite value at $[1]_0$. To ensure finiteness at every $x\in K_{0}(A)^{+}$ we would require that finitely many $\Gamma$-iterates of $x$ lie above $[1]_{0}$. This is exactly the notion of $K$-theoretic minimality we looked at in Section 3.1.

\begin{prop}\label{CNP iff state} Let $A$ be a stably finite unital C*-algebra for which $K_{0}(A)^{+}$ has Riesz refinement ($\sr(A)=1$ and $\RR(A)=0$ for example). Let  $\alpha:\Gamma\rightarrow\Aut(A)$ be an action on $A$. Consider the following properties.
\begin{enumerate}
\item For every $0\neq g\in K_{0}(A)^{+}$, there is a faithful $\Gamma$-invariant positive group homomorphism $\beta:K_{0}(A)\rightarrow \mathbb{R}$ with $\beta(g)=1$, $(\Gamma$-invariant in the sense that $\beta\circ\hat\alpha=\beta$ on $K_{0}(A)).$
\item There is a faithful $\Gamma$-invariant state $\beta$ on $(K_{0}(A),K_{0}(A)^{+},[1]_{0})$.
\item $\alpha$ is completely non-paradoxical.
\end{enumerate}
Then we have $(1)\Rightarrow(2)\Rightarrow(3)$. If the action $\alpha$ is minimal, then $(3)\Rightarrow(1)$ whence all the conditions are equivalent.
\end{prop}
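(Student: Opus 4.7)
The plan is to dispatch $(1)\Rightarrow(2)$ and $(2)\Rightarrow(3)$ directly, then handle $(3)\Rightarrow(1)$ under minimality by applying Tarski's Theorem~\ref{Tarski} to the type semigroup $S(A,\Gamma,\alpha)$ and pulling the resulting state back along the canonical map $\rho:K_0(A)^+\to S(A,\Gamma,\alpha)$. The implication $(1)\Rightarrow(2)$ is immediate by specializing (1) to $g=[1]_0$, since a state on $(K_0(A),K_0(A)^+,[1]_0)$ is by definition a positive group homomorphism to $\mathbb{R}$ sending the order unit to $1$. For $(2)\Rightarrow(3)$, if some $0\neq x\in K_0(A)^+$ were $(\Gamma,k,l)$-paradoxical with witnesses $x_1,\dots,x_n$ and $t_1,\dots,t_n$, applying $\beta$ and using $\Gamma$-invariance gives $k\beta(x)\leq\sum_j\beta(x_j)=\sum_j\beta(\hat\alpha_{t_j}(x_j))\leq l\beta(x)$, and faithfulness ($\beta(x)>0$) forces $k\leq l$, contradicting $k>l$.

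For $(3)\Rightarrow(1)$, fix $0\neq g\in K_0(A)^+$. By Lemma~\ref{klpar} the class $[g]_\alpha$ is completely non-paradoxical in the algebraically preordered abelian monoid $S(A,\Gamma,\alpha)$, so Tarski's theorem produces a state $\nu:S(A,\Gamma,\alpha)\to[0,\infty]$ with $\nu([g]_\alpha)=1$. Set $\beta:=\nu\circ\rho$ on $K_0(A)^+$; additivity, order-preservation, and $\Gamma$-invariance are automatic from the construction of $\sim_\alpha$ (since $\hat\alpha_t(x)\sim_\alpha x$). Once $\beta$ is shown to be finite-valued and faithful, it extends uniquely to the desired positive $\Gamma$-invariant group homomorphism $K_0(A)\to\mathbb{R}$ via the Grothendieck trick $\beta(x-y):=\beta(x)-\beta(y)$, which is well-defined because $K_0(A)^+\cap(-K_0(A)^+)=(0)$ in the stably finite setting.

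Finiteness and faithfulness are where minimality does its work. Minimality, together with the standing hypothesis (which in the parenthetical $\sr(A)=1$, $\RR(A)=0$ case guarantees enough projections in ideals), upgrades to $K_0$-minimality via Theorem~\ref{min3}. Applying $K_0$-minimality to $g$ produces $s_1,\dots,s_m\in\Gamma$ with $\sum_j\hat\alpha_{s_j}(g)\geq[1]_0$, so $[[1]_0]_\alpha\leq m[g]_\alpha$ in $S(A,\Gamma,\alpha)$ and thus $\nu([[1]_0]_\alpha)\leq m$. Any $x\in K_0(A)^+$ is represented as $[p]_0$ with $p$ a projection in some $M_n(A)$, so $x\leq n[1]_0$ and $\beta(x)\leq nm<\infty$. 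Faithfulness runs the same way: if $\beta(y)=0$ with $y\neq 0$, applying $K_0$-minimality to $y$ shows $\nu([[1]_0]_\alpha)=0$, which by the same comparison forces $\nu\equiv 0$ and contradicts $\nu([g]_\alpha)=1$.

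The main obstacle is precisely the gap between the abstract Tarski state (valued in $[0,\infty]$ on a quotient monoid, with no a priori finiteness or faithfulness) and the concrete faithful real-valued positive $\Gamma$-invariant homomorphism on $K_0(A)$ that condition (1) demands. Minimality bridges this gap by forcing any nonzero positive element and the order unit to dominate each other through finitely many $\Gamma$-iterates, which simultaneously bounds $\beta$ everywhere and rules out null positive values. Without minimality the pulled-back state can in principle vanish on nontrivial invariant order ideals, and Tarski alone is powerless to rule that out.
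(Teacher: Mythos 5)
Your proposal is correct and follows essentially the same route as the paper: specialize to $g=[1]_0$ for $(1)\Rightarrow(2)$, apply the invariant faithful state to a paradoxical decomposition for $(2)\Rightarrow(3)$, and for $(3)\Rightarrow(1)$ combine Lemma~\ref{klpar} with Tarski's Theorem, then use minimality (in its $K_0$-form) to force finiteness and faithfulness of $\nu\circ\rho$ before extending by the Grothendieck construction. Your explicit appeal to Theorem~\ref{min3} to pass from minimality to $K_0$-minimality, and your bounding of $\nu([[1]_0]_\alpha)$ first, are only cosmetic variations on the paper's argument.
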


\begin{proof}$(1)\Rightarrow(2)$: Simply take $g=[1]_{0}$.

$(2)\Rightarrow(3)$: Assume that $x\in K_{0}(A)^{+}$ is $(\Gamma,k,l)$-paradoxical for some integers $k>l>0$ with paradoxical decomposition $\sum_{j}^{n}x_{j}\geq kx$ and $\sum_{j}^{n}\hat\alpha_{t_{j}}(x_{j})\leq lx$ for certain $x_{j}\in K_{0}(A)^{+}$ and $t_{j}\in\Gamma$. Apply the $\hat\alpha$-invariant state $\beta$ and get
\[k\beta(x)=\beta(kx)\leq\beta\big(\sum_{j}^{n}x_{j}\big)=\sum_{j}^{n}\beta(x_{j})=\sum_{j}^{n}\beta(\hat\alpha_{t_{j}}(x_{j}))\\
=\beta\big(\sum_{j}^{n}\hat\alpha_{t_{j}}(x_{j})\big)\leq\beta(lx)=l\beta(x).\]

Now since $\beta$ is faithful, we may divide by $\beta(x)>0$ and get $k\leq l$ which is absurd.

Assuming the action $\alpha$ is minimal we prove $(3)\Rightarrow(1)$. Fix a non-zero $g\in K_{0}(A)^{+}$. Since the action is completely non-paradoxical, it follows from Lemma~\ref{klpar} that for every positive integer $n$, $(n+1)[g]_{\alpha}\nleq n[g]_{\alpha}$. Tarski's Theorem then states that $S(A,\Gamma,\alpha)$ admits a non-trivial state $\nu:S(A,\Gamma,\alpha)\rightarrow[0,\infty]$ with $\nu([g]_{\alpha})=1$.\\

\noindent\textbf{Claim:} $\nu$ is finite.\\

To see this, employ $K$-minimality of the action to obtain group elements $t_{1},\dots,t_{n}$ such that $\sum_{j=1}^{n}\hat\alpha_{t_{j}}(g)\geq [1]_{0}$. Now for an arbitrary $[x]_\alpha$ in $S(A,\Gamma,\alpha)$ with $x$ belonging to $K_{0}(A)^{+}$, there is a positive integer $m$ with $x\leq m[1]_{0}\leq m\sum_{j=1}^{n}\hat\alpha_{t_{j}}(g)$. Therefore
\[ [x]_\alpha\leq\bigg[m\sum_{j=1}^{n}\hat\alpha_{t_{j}}(g)\bigg]_\alpha=m\bigg[\sum_{j=1}^{n}\hat\alpha_{t_{j}}(g)\bigg]_\alpha=m[ng]_\alpha=mn[g]_\alpha.\]
Applying $\nu$ yields $\nu([x]_\alpha)\leq\nu(mn[g]_\alpha)=mn\nu([g]_\alpha)=mn$. The Claim is therefore proved.

We now compose $\nu$ with our above $\rho:K_{0}(A)^{+}\rightarrow S(A,\Gamma,\alpha)$ to yield $\beta':K_{0}(A)^{+}\rightarrow([0,\infty),+)$ a finite order preserving monoid homomorphism given by $\beta'(x)=\nu([x]_\alpha)$. Note how $\beta'$ is invariant under the action $\hat\alpha:\Gamma\curvearrowright K_{0}(A)^{+}$. Indeed, for $t$ in $\Gamma$, and $x$ in $K_{0}(A)^{+}$,
\[\beta'(\hat\alpha_{t}(x))=\nu([\hat\alpha_{t}(x)]_\alpha)=\nu([x]_\alpha)=\beta'(x).\]
By universality of the Grothendieck enveloping group construction, there is a unique extension of $\beta'$ to a group homomorphism on all of $K_{0}(A)$, which we will denote as $\beta$, given simply by $\beta(x-y)=\beta'(x)-\beta'(y)$ for $x,y$ in $K_{0}(A)^{+}$. Clearly $\beta$ is still $\Gamma$-invariant. The final product is a bona fide $\Gamma$-invariant positive group homomorphism $\beta:K_{0}(A)\rightarrow\mathbb{R}$, with $\beta(g)=1$. We now show how $\beta$ is faithful which will complete this direction. Assume $0\neq x\in K_{0}(A)^{+}$. Minimality ensures the existence of group elements $t_{1},\dots t_{n}$ with $\sum_{j=1}^{n}\hat\alpha_{t_{j}}(x)\geq [1]_{0}$. Now we find a positive integer $m$ for which $m[1]_{0}\geq g$, so that $m\big(\sum_{j=1}^{n}\hat\alpha_{t_{j}}(x)\big)\geq g$. Applying $\beta$ gives
\[1=\beta(g))\leq \beta\big(m\big(\sum_{j=1}^{n}\hat\alpha_{t_{j}}(x)\big)\big)=m\big(\sum_{j=1}^{n}\beta(\hat\alpha_{t_{j}}(x))\big)
=m\big(\sum_{j=1}^{n}\beta(x)\big)=mn\beta(x)\]
thus $\beta(x)\neq 0$ and $\beta$ is indeed faithful.
\end{proof}

We now are ready to establish the long desired converse.

\begin{thm}\label{stablyfinitecross} Let $A$ be a stably finite unital C*-algebra for which $K_{0}(A)^{+}$ has Riesz refinement ($\sr(A)=1$ and $\RR(A)=0$ for example). Let  $\alpha:\Gamma\rightarrow\Aut(A)$ be a minimal action on $A$. Consider the following properties.
\begin{enumerate}
\item There is an $\Gamma$-invariant faithful tracial state $\tau:A\rightarrow\mathbb{C}$.
\item $A\rtimes_{\lambda}\Gamma$ admits a faithful tracial state.
\item $A\rtimes_{\lambda}\Gamma$ is stably finite.
\item $\alpha$ is completely non-paradoxical.
\item There is a faithful $\Gamma$-invariant state $\beta$ on $(K_{0}(A),K_{0}(A)^{+},[1]_{0})$.
\end{enumerate}

Then we have the following implications:
\[(1)\Leftrightarrow(2)\Rightarrow(3)\Rightarrow(4)\Rightarrow(5).\]

If $A$ is exact and projections are total in $A$ (e.g. $RR(A)=0$) then $(5)\Leftrightarrow(1)$. Furthermore, if $A$ is AF and $\Gamma$ is a free group, then $(1)$ through $(5)$ are all equivalent to $A\rtimes_{\lambda}\Gamma$ being MF.
\end{thm}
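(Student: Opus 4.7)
The plan is to close the cycle $(1)\Rightarrow(2)\Rightarrow(3)\Rightarrow(4)\Rightarrow(5)\Rightarrow(1)$. Most of the chain is covered by the preceding propositions, so the new inputs are $(1)\Leftrightarrow(2)$, the standard implication $(2)\Rightarrow(3)$, the return leg $(5)\Rightarrow(1)$, and the MF addendum. For $(1)\Leftrightarrow(2)$ my plan is to exploit the canonical conditional expectation $\mathbb{E}:A\rtimes_{\lambda}\Gamma\to A$, which is faithful on positive elements. Given a $\Gamma$-invariant faithful tracial state $\tau$ on $A$, the composite $\tau\circ\mathbb{E}$ is faithful (by faithfulness of $\mathbb{E}$ and of $\tau$) and tracial (by a short check on spanning elements $au_s,bu_t$, where $\Gamma$-invariance of $\tau$ supplies precisely the identity $\tau(a\alpha_s(b))=\tau(b\alpha_{s^{-1}}(a))$ needed in the case $t=s^{-1}$). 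Conversely, restriction of a faithful tracial state on $A\rtimes_{\lambda}\Gamma$ to $A$ remains faithful, and traciality combined with $\alpha_t(a)=u_tau_t^*$ yields $\Gamma$-invariance. The implication $(2)\Rightarrow(3)$ is the standard fact that faithful tracial states preclude infinite projections at every matrix level.

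The implications $(3)\Rightarrow(4)$ and $(4)\Rightarrow(5)$ are direct citations of Proposition~\ref{SF implies CNP} and Proposition~\ref{CNP iff state}, with minimality being used in the latter to invoke the $(3)\Rightarrow(1)$ direction there and then specialize $g=[1_A]_0$ to produce a single invariant faithful state.

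The main obstacle is the return leg $(5)\Rightarrow(1)$ under exactness of $A$ and density of projections. Here my plan is a two-step lifting of the $\Gamma$-invariant state $\beta$ on $K_0(A)$ to a tracial state on $A$. First, real rank zero together with the Riesz refinement property allow $\beta$ to extend, via the Blackadar--Handelman correspondence, to a lower semicontinuous $2$-quasitracial state on $A$, by assigning $\beta([p]_0)$ as the dimension of each projection $p$ and extending by linearity and continuity. Second, Haagerup's theorem that quasitraces on exact C*-algebras are traces converts this to an honest tracial state $\tau$. The $\Gamma$-invariance of $\tau$ is inherited from that of $\beta$ via uniqueness of the extension on projections, and faithfulness follows because, by real rank zero, any nonzero $a\in A^+$ contains a nonzero projection $p$ in its hereditary subalgebra, forcing $\tau(a)\geq\tau(p)=\beta([p]_0)>0$. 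This step is where the external heavy machinery enters and is the crux of the argument.

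Finally, for the MF addendum with $A$ AF and $\Gamma$ a free group, the direction ``MF implies stably finite'' is automatic. For the converse, once (5) supplies a faithful $\Gamma$-invariant state $\beta$ on the dimension group $K_0(A)$, the characterization of MF actions developed by the author in~\cite{Ra} (notably the Proposition~4.8 already invoked in Proposition~\ref{Para1}) shows that $\alpha$ is an MF action, since the required locally $\Gamma$-invariant positive homomorphisms can be assembled by approximating $\beta$ in the dimension-group setting. Combining this with the fact that $C^*_\lambda(\mathbb{F}_r)$ is MF, together with the Kerr--Nowak theorem that MF actions by MF groups on AF algebras produce MF crossed products, completes the equivalence.
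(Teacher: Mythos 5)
Your main chain follows the paper's route almost exactly: $(1)\Leftrightarrow(2)\Rightarrow(3)$ by the standard conditional-expectation and trace arguments, $(3)\Rightarrow(4)$ and $(4)\Rightarrow(5)$ by citing Propositions~\ref{SF implies CNP} and~\ref{CNP iff state} (with minimality entering exactly where you say), and $(5)\Rightarrow(1)$ by lifting the $K_0$-state to a quasitrace and invoking Haagerup's theorem for exact algebras --- this Blackadar--Handelman/Haagerup step is precisely what the paper's terse sentence ``since $A$ is exact, $\beta$ arises from a tracial state'' is relying on, and your check of $\Gamma$-invariance via totality of projections is the paper's. One small slip: for $p$ a nonzero projection in $\overline{aAa}$ the inequality $\tau(a)\geq\tau(p)$ is false in general (take $a=\tfrac12 p$); the correct fix is to note that $\tau(a)=0$ would put $a$, hence the hereditary subalgebra $\overline{aAa}$, inside the trace-kernel ideal $\{x:\tau(x^*x)=0\}$, forcing $\tau(p)=\beta([p]_0)=0$ and contradicting faithfulness of $\beta$. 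This is a one-line repair, and in fact the paper does not spell out faithfulness at all, so your attention to it is welcome.

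The genuine gap is in the MF addendum. You propose to go from the faithful $\Gamma$-invariant state $\beta$ on $K_0(A)$ to the conclusion that $\alpha$ is an MF action by ``assembling'' the required locally $\Gamma$-invariant positive homomorphisms from $\beta$, citing Proposition~4.8 of~\cite{Ra}. That proposition runs in the opposite direction: it extracts local invariant homomorphisms \emph{from} an MF (quasidiagonal) action; it does not assert that an invariant real-valued state on the dimension group can be discretized into such homomorphisms, and passing from a trace-type object to finite-dimensional approximations of this kind is exactly the hard content one cannot wave through. The paper avoids this entirely: for $A$ AF and $\Gamma=\mathbb{F}_r$ it simply cites the theorem of~\cite{Ra} that $A\rtimes_\lambda\mathbb{F}_r$ is MF if and only if it is stably finite, so the addendum follows from the already-established equivalence of $(1)$--$(5)$ with stable finiteness (MF $\Rightarrow$ stably finite being automatic, as you note). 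Replace your Proposition~4.8 argument with that citation and the addendum is fine; as written, that step does not go through.
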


\begin{proof} It is well known that $(1)\Leftrightarrow(2)\Rightarrow(3)$. Also, $(3)\Rightarrow(4)$ is Proposition~\ref{SF implies CNP} and $(4)\Rightarrow(5)$ is Proposition~\ref{CNP iff state}.


$(5)\Rightarrow(1)$: Since $A$ exact, such a $\beta$ arises from a tracial state $\tau:A\rightarrow\mathbb{C}$, via $\tau(p)=\beta([p])$ for any projection $p\in A$. We need only to show the $\Gamma$-invariance of $\tau$. For any $s\in\Gamma$ and projection $p$ in $A$,
\[\tau(\alpha_{s}(p))=\beta([\alpha_{s}(p)])=\beta\circ\hat\alpha_{s}([p])=\beta([p])=\tau(p).\]
Using linearity, continuity, and the fact that the projections are total in $A$, it follows that $\tau(\alpha_{s}(a))=\tau(a)$ for every $a\in A$ and $s\in\Gamma$ which yields the invariance.

Now we let $\Gamma=\mathbb{F}_{r}$ and $A$ an AF algebra. In~\cite{Ra} the author shows that $A\rtimes_\lambda \mathbb{F}_{r}$ is MF if and only if it is stably finite.

\end{proof}

Recall that if a discrete group $\Gamma$ is amenable, and $K$ is a Frechet space, then any continuous action $\Gamma\curvearrowright K$ admits a fixed point.

\begin{cor} Let $A$ be a simple, unital, AF algebra and $\Gamma$ a discrete amenable group. Then any action $\alpha:\Gamma\rightarrow\Aut(A)$ is completely non-paradoxical.
\end{cor}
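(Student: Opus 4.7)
My proof plan is to produce a $\Gamma$-invariant faithful tracial state on $A$ and then invoke the implication $(1)\Rightarrow(4)$ of Theorem~\ref{stablyfinitecross}.

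First, I would note that the hypotheses of Theorem~\ref{stablyfinitecross} are met. Indeed, $A$ is unital and AF, so $\sr(A)=1$ and $\RR(A)=0$, whence $K_0(A)^+$ has the Riesz refinement property. Moreover, $A$ is simple, so the only $\alpha$-invariant ideals of $A$ are $(0)$ and $A$; hence $\alpha$ is automatically minimal in the sense used throughout the paper. Thus it suffices to verify condition (1) of Theorem~\ref{stablyfinitecross}: the existence of a $\Gamma$-invariant faithful tracial state on $A$.

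The tracial state space $T(A)$ is a non-empty (since $A$ is unital AF) convex weak-$*$ compact subset of the dual $A^*$. The action $\alpha$ induces an affine, weak-$*$ continuous action of $\Gamma$ on $T(A)$ by $s\cdot\tau=\tau\circ\alpha_{s^{-1}}$. Since $\Gamma$ is amenable, the fixed-point property recalled just above the corollary (applied here in its standard Day/Markov--Kakutani form for amenable group actions on compact convex sets in a locally convex space) yields a $\Gamma$-fixed point $\tau\in T(A)$; this $\tau$ is a $\Gamma$-invariant tracial state on $A$. Faithfulness is automatic from simplicity of $A$: the trace kernel $\{a\in A:\tau(a^*a)=0\}$ is a closed two-sided ideal of $A$ not containing $1_A$ (since $\tau(1_A)=1$), hence must be $(0)$.

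With $\tau$ in hand, Theorem~\ref{stablyfinitecross}(1)$\Rightarrow$(4) delivers complete non-paradoxicality of $\alpha$, finishing the proof. The only step where one has to be a bit careful is pinning down the appropriate fixed-point theorem; the rest is essentially bookkeeping, verifying that the standing hypotheses of Theorem~\ref{stablyfinitecross} (stable finiteness, Riesz refinement, minimality, exactness, totality of projections) are all satisfied for free in the simple unital AF setting.
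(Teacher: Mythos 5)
Your proposal is correct and follows essentially the same route as the paper: the paper's proof likewise obtains a $\Gamma$-fixed point in $T(A)$ from amenability and then appeals to Theorem~\ref{stablyfinitecross}. You simply spell out the details (minimality from simplicity, faithfulness of the invariant trace, Riesz refinement for AF algebras) that the paper leaves implicit.
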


\begin{proof} The group $\Gamma$ acts on the tracial state space $T(A)$ and thus has a fixed point. Now refer to the previous Theorem.
\end{proof}

\subsection{Purely Infinite Crossed Products}

A continuous action $\Gamma\curvearrowright X$ of a discrete group on a compact Hausdorff space is called a \emph{strong boundary action} if $X$ has at least three points and for every pair $U,V$ of non-empty open subsets of $X$ there exists $t\in\Gamma$ with $t.U^c\subset V$. Laca and Spielberg showed in~\cite{LS} that if $\Gamma\curvearrowright X$ is a strong boundary action and the induced action $\Gamma\curvearrowright C(X)$ is properly outer then $C(X)\rtimes_{\lambda}\Gamma$ is purely infinite and simple.

Jolissaint and Robertson~\cite{JR} made a generalization valid in the noncommutative setting. They termed an action $\alpha:\Gamma\rightarrow\Aut(A)$ as  \emph{$n$-filling} if, for all $a_1,\dots,a_n\in A^+$, with $\|a_{j}\|=1$, $1\leq j\leq n$, and for all $\varepsilon>0$, there exist $t_1,\dots,t_n\in\Gamma$ such that $\sum_{j=1}^{n}\alpha_{t_{j}}(a_j)\geq (1-\varepsilon)1_A$. They showed that $A\rtimes_{\lambda}\Gamma$ is purely infinite and simple provided that the action is properly outer and $n$-filling and every corner $pAp$ of $A$ is infinite dimensional. Using ordered $K$-theoretic dynamics we shall provide an alternate simpler proof of this result below, albeit for a smaller class of algebras.

The following lemma contains ideas from Lemma 3.1 of~\cite{SR}.

\begin{lem}\label{CuntzSmaller} Let $(A,\Gamma,\alpha)$ be a C*-dynamical system with $A$ separable and $\Gamma$ countable and discrete. Assume that $\alpha$ is properly outer. Then for every non-zero $b\in (A\rtimes_{\lambda}\Gamma)^+$ there is a non-zero $a\in A^+$ with $a\precsim b$.
\end{lem}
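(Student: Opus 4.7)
The plan is to compress $b$ by a carefully chosen positive element of $A$ so that all the ``off-diagonal'' Fourier coefficients become negligible, leaving behind (up to a small perturbation) a non-zero element of $A$ which is then Cuntz-dominated by $b$.

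First I would use the faithfulness of the canonical conditional expectation $\mathbb{E}:A\rtimes_{\lambda}\Gamma\to A$: since $b\geq 0$ and $b\neq 0$, the element $a_{0}:=\mathbb{E}(b)\in A^{+}$ is non-zero. For any $\varepsilon>0$ I would then approximate $b$ by a finitely supported element $y=\sum_{t\in F}a_{t}u_{t}$ of the algebraic crossed product with $a_{e}=a_{0}$, chosen so that $\|b-y\|<\varepsilon$. Cutting down to the hereditary subalgebra $\overline{a_{0}Aa_{0}}$ provides the natural place to hunt for a useful compression.

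The heart of the argument is a Kishimoto-type avoidance lemma, which is available precisely because $\alpha_{t}$ is properly outer for every $t\in F\setminus\{e\}$: there exists a positive contraction $c\in\overline{a_{0}Aa_{0}}^{+}$ such that $\|ca_{t}\alpha_{t}(c)\|<\varepsilon/|F|$ for every $t\in F\setminus\{e\}$, while $\|ca_{0}c\|$ remains bounded below by some explicit constant (e.g.\ close to $\|a_{0}\|^{3}$, by choosing $c$ via functional calculus of $a_{0}$). Using the Fourier relation $u_{t}c=\alpha_{t}(c)u_{t}$ inside the reduced crossed product I would then expand
\[
cyc \;=\; ca_{0}c \;+\; \sum_{t\in F\setminus\{e\}} c\,a_{t}\,\alpha_{t}(c)\,u_{t},
\]
and the off-diagonal sum has norm $<\varepsilon$. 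Combining with $\|c(b-y)c\|<\varepsilon$ yields $\|cbc-ca_{0}c\|<2\varepsilon$.

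Finally I would invoke R\o rdam's standard functional-calculus comparison: if $\|x-y\|<\delta$ in a \cstar-algebra, then $(y-\delta)_{+}\precsim x$. Applied to $x=cbc$ and $y=ca_{0}c$ with $\delta=2\varepsilon$, this gives
\[
(ca_{0}c-2\varepsilon)_{+}\;\precsim\;cbc\;\precsim\;b,
\]
the last inequality being the universal Cuntz inequality $cbc\precsim b$. For $\varepsilon$ chosen small enough relative to the lower bound on $\|ca_{0}c\|$, the element $a:=(ca_{0}c-2\varepsilon)_{+}$ is a non-zero positive element of $A$, and we are done.

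The main obstacle is the proper invocation of Kishimoto's lemma: one has to balance two quantitative demands on the same $c$ (killing every off-diagonal Fourier coefficient uniformly, while keeping $\|ca_{0}c\|$ safely bounded away from zero), and one must verify that proper outerness of the individual automorphisms $\alpha_{t}$ is enough to secure the required joint avoidance property over the finite set $F\setminus\{e\}$. Once that quantitative lemma is in hand, the rest of the argument is a clean perturbation/Cuntz-comparison computation.
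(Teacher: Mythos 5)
Your proposal is correct and follows essentially the same route as the paper's proof: faithfulness of $\mathbb{E}$ to get a non-zero diagonal coefficient, approximation by a finitely supported element, a compression by a positive element killing the off-diagonal coefficients while preserving the norm of $\mathbb{E}(b)$, and R{\o}rdam's $(y-\delta)_{+}\precsim x$ comparison to extract the non-zero $a\in A^{+}$ with $a\precsim b$. The ``Kishimoto-type avoidance lemma'' whose availability you worry about is exactly Lemma 7.1 of Olesen--Pedersen (cited in the paper), which under proper outerness of each $\alpha_{t}$, $t\in F\setminus\{e\}$, supplies a single norm-one $x\in A^{+}$ satisfying both quantitative demands simultaneously, so no further balancing argument is needed.
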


\begin{proof} We know that $\mathbb{E}(b)\neq0$ since $b$ is non-zero and $\mathbb{E}$ is faithful. Set $b_{1}=b/\|\mathbb{E}(b)\|$ so that $\|\mathbb{E}(b_{1})\|=1$. Let $0<\varepsilon<1/16$. Find a $\delta>0$ with $\frac{\delta(1+\|b_1\|)}{1-\delta}<\varepsilon$. Next find a non-zero positive $c\in C_{c}(\Gamma,A)^+$ with $\|c-b_{1}\|<\delta$. Write $c=\sum_{s\in F}c_{s}u_{s}$ where $F$ is a finite subset of $\Gamma$. Note that $\mathbb{E}(c)=c_{e}\neq0$, and also $\big|1-\|c_{e}\|\big|\leq\delta$. Setting $d=c/\|c_{e}\|$ we estimate
\begin{align*}\|b_1-d\|=&\frac{1}{\|c_e\|}\big\|\|c_e\|b_1-c\big\|=\frac{1}{\|c_e\|}\big\|\|c_e\|b_1-b_1+b_1-c\big\|
\leq\frac{1}{\|c_e\|}\big(|\|c_e\|-1|\|b_1\|+\|b_1-c\|\big)\\ &\leq\frac{1}{1-\delta}(\delta\|b_1\|+\delta)=\frac{\delta}{1-\delta}(1+\|b_{1}\|)<\varepsilon.
\end{align*}

Now let $\eta>0$ be so small that $|F|\eta<1/8$. Since $A$ is separable and $\alpha$ is properly outer, we apply Lemma 7.1 of~\cite{OP} and obtain an element $x\in A^+$ with $\|x\|=1$ satisfying
\[\|x\mathbb{E}(d)x\|=\|xd_ex\|>\|d_e\|-\eta=1-\eta, \qquad \|xd_s\alpha_s(x)\|<\eta\quad \forall s\in F\setminus\{e\}.\]
Therefore we have
\begin{align*}\|x\mathbb{E}(d)x-xdx\|&\leq\bigg\|\sum_{s\in F\setminus\{e\}}xd_su_sx\bigg\|\leq \sum_{s\in F\setminus\{e\}}\|xd_su_sx\|\\
&=\sum_{s\in F\setminus\{e\}}\|xd_su_sxu_s^*\|=\sum_{s\in F\setminus\{e\}}\|xd_s\alpha_s(x)\|\leq |F|\eta<1/8.
\end{align*}
A straightforward use of the triangle inequality now gives
\[\|x\mathbb{E}(b_1)x-xb_1x\|\leq 2\varepsilon+1/8<1/4,\qquad \|x\mathbb{E}(b_1)x\|\geq3/4.\]
Let $a:=(x\mathbb{E}(b_1)x-1/2)_+$. Then $a\in A$ and $a\neq0$ since $\|x\mathbb{E}(b_1)x\|>1/2$. Also by Proposition 2.2 of~\cite{R} we know $a\precsim xb_1x\precsim b_1\precsim b$.
\end{proof}

Theorem 4.1 in~\cite{SR} concentrates on the commutative case. We, however, make the observation that the same proof holds true for noncommutative algebras. Recall that a \cstar-algebra $A$ has property (SP) if every non-zero hereditary subalgebra admits a non-zero projection.

\begin{thm}\label{purelyinfinite} Let $(A,\Gamma,\alpha)$ be a C*-dynamical system with $A$ separable with property (SP) and $\Gamma$ countable and discrete. Assume that $\alpha$ is minimal and properly outer. Then the following are equivalent:
\begin{enumerate}
\item $A\rtimes_{\lambda}\Gamma$ is purely infinite.
\item Every non-zero projection $p$ in $A$ is properly infinite in $A\rtimes_{\lambda}\Gamma$.
\end{enumerate}
\end{thm}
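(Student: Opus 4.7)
The implication $(1)\Rightarrow(2)$ is essentially tautological. If $B:=A\rtimes_{\lambda}\Gamma$ is purely infinite in the sense of Kirchberg--R{\o}rdam, then every non-zero positive element $a\in B^{+}$ satisfies $\langle a\rangle+\langle a\rangle\leq\langle a\rangle$ in $W(B)$. Specializing this to a projection $p\in A\subset B$ and using the fact that Cuntz comparison reduces to Murray--von Neumann comparison on projections, one recovers an orthogonal decomposition of $p$ into two subprojections each equivalent to $p$, which is exactly proper infiniteness of $p$ in $B$.

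For $(2)\Rightarrow(1)$, the first key observation is that $B$ is simple: since $\alpha$ is minimal and properly outer, this follows from the Olesen--Pedersen result cited earlier in the excerpt. To establish pure infiniteness of $B$, it suffices to show that every non-zero $b\in B^{+}$ is properly infinite in the Cuntz sense, i.e.\ $b\oplus b\precsim b$. Given such $b$, Lemma~\ref{CuntzSmaller} supplies a non-zero $a\in A^{+}$ with $a\precsim b$ in $B$. Invoking property $(SP)$ of $A$, the non-zero hereditary subalgebra $\overline{aAa}\subset A$ contains a non-zero projection $q$, and by construction $q\precsim a\precsim b$ in $B$.

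By hypothesis (2), the projection $q$ is properly infinite in $B$, which yields $q^{\oplus n}\precsim q$ for every $n\in\mathbb{N}$ by induction. On the other hand, $q$ is a non-zero (hence full) projection in the simple unital algebra $B$, so $1_{B}\in\overline{BqB}$; a standard Cuntz-comparison argument then gives $1_{B}\precsim q^{\oplus n}$ for some $n$, and combining these two facts one obtains $1_{B}\precsim q$. Since $b'\precsim 1_{B}$ for every $b'\in B^{+}$, this forces $b\precsim q$, and putting the pieces together
\[
b\oplus b\precsim q\oplus q\precsim q\precsim b,
\]
which is the desired proper infiniteness of $b$.

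The only step with any substance is the Cuntz-comparison passage $1_{B}\precsim q^{\oplus n}$, which is where simplicity of $B$ enters concretely through the fullness of $q$; everything else is a matter of stringing together Lemma~\ref{CuntzSmaller}, property $(SP)$, and the definition of proper infiniteness. The real work of the theorem is thus carried by the earlier technology (the intersection-type lemma and simplicity of the crossed product), and the equivalence reduces to routine Cuntz bookkeeping once those tools are in hand.
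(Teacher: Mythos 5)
Your proposal is correct, and its skeleton coincides with the paper's: simplicity of $B=A\rtimes_{\lambda}\Gamma$ from Olesen--Pedersen, then Lemma~\ref{CuntzSmaller} plus property (SP) to find a non-zero projection $q\in A$ with $q\precsim a\precsim b$, and then hypothesis (2) applied to $q$. Where you diverge is the endgame. The paper finishes by verifying its stated definition of pure infiniteness directly: since $q$ is a projection with $q\precsim b$, one gets an exact factorization $q=z^{*}bz$, and $v:=b^{1/2}z$ transports $q$ to the equivalent properly infinite projection $vv^{*}$ inside the hereditary subalgebra generated by $b$. You instead use that $q$ is full in the simple unital algebra $B$ to get $1_{B}\precsim q^{\oplus n}\precsim q$, hence $b\precsim 1_{B}\precsim q\precsim b$ and so $b\oplus b\precsim b$ for every non-zero $b\in B^{+}$; this is fine, and in fact it yields the stronger statement that Cuntz comparison in $B$ is trivial. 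The only thing to be aware of is that your conclusion is pure infiniteness in the Kirchberg--R{\o}rdam sense (every non-zero positive element properly infinite), whereas the paper's working definition asks for a properly infinite projection in every hereditary subalgebra; for the simple algebra $B$ these are equivalent by the standard Kirchberg--R{\o}rdam/Cuntz results, so you should either cite that equivalence explicitly or append the paper's one-line $v=b^{1/2}z$ argument, which converts your $q\precsim b$ into a properly infinite projection inside $\overline{bBb}$ without any citation. Each route buys something: the paper's is self-contained relative to its definition, yours makes the trivial-comparison structure of $B$ explicit with essentially no extra work.
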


\begin{proof} $(1)\Rightarrow(2)$: Every non-zero projection in any purely infinite algebra is properly infinite.

$(2)\Rightarrow(1)$: By Theorem 7.2 in~\cite{OP} we know that the reduced crossed product $A\rtimes_{\lambda}\Gamma$ is simple. Therefore, it suffices to show that every hereditary subalgebra admits an infinite projection. To this end, let $B\subset A\rtimes_{\lambda}\Gamma$ be a hereditary \cstar-subalgebra and let $0\neq b\in B$. By lemma~\ref{CuntzSmaller} there is a non-zero $a$ in $A$ with $a\precsim b$. Since $A$ has property (SP), the hereditary subalgebra of $A$ generated by $a$, $H_a=\overline{aAa}$, contains a non-zero projection $q\in H_a$. By our assumption $q$ is properly infinite relative to $A\rtimes_{\lambda}\Gamma$, and $q\precsim a\precsim b$. Since $q$ is a projection, there is a $z\in A\rtimes_{\lambda}\Gamma$ with $q=z^*bz$. Now consider $v:=b^{1/2}z$. Then $q=v^*v\sim vv^*=b^{1/2}zz^*b^{1/2}\in B$. Thus $p:=vv^*$ is the desired properly infinite projection in $B$.
\end{proof}

We now embark on studying to what extent paradoxical systems $(A,\Gamma,\alpha)$  characterize purely infinite reduced crossed product algebras $A\rtimes_{\lambda}\Gamma$.

\begin{prop}~\label{propinf}  Let $(A,\Gamma,\alpha)$ be a C*-system for which $A$ has cancellation and $K_{0}(A)^+$ has the Riesz refinement property. Let $0\neq r\in\mathcal{P}(A)$ and set $g=[r]_{0}\in K_{0}(A)^{+}$. The following properties are equivalent:
\begin{enumerate}
\item There exist $x,y\in C_{c}(\Gamma,A)$ that satisfy $x^*x=r=y^*y$, $xx^*\perp yy^*$, $xx^*\leq r$, $yy^*\leq r$, and whose coefficients are partial isometries.
\item $g$ is $(k,1)$-paradoxical for some $k\geq2$.
\item $\theta=[g]_{\alpha}$ is infinite in $S(A,\Gamma,\alpha)$.
\end{enumerate}
\end{prop}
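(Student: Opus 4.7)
I will prove the three-way equivalence cyclically via $(1) \Rightarrow (2) \Leftrightarrow (3) \Rightarrow (1)$. The middle equivalence is largely bookkeeping: by Lemma~\ref{klpar}, $g$ is $(\Gamma,k,1)$-paradoxical for some $k \geq 2$ iff $k[g]_\alpha \leq [g]_\alpha$ in $S(A,\Gamma,\alpha)$, which for $k \geq 2$ collapses to $2\theta \leq \theta$, that is, to $\theta$ being (properly) infinite in the monoid. The real work lies in the other two directions.

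For $(1) \Rightarrow (2)$ I would expand $x = \sum_{s \in F} x_s u_s$ and $y = \sum_{t \in G} y_t u_t$, then set $p_s = x_s^* x_s$, $q_s = x_s x_s^*$ (and analogously $p'_t, q'_t$), all of which are projections in $A$ since the coefficients are partial isometries. The $e$-coefficient of the Fourier expansion of $x^*x$ reads $\sum_s \alpha_{s^{-1}}(p_s) = r$; because a sum of projections equal to a projection is automatically pairwise orthogonal (expand $(\sum e_i)^2 = \sum e_i$), this is an honest orthogonal decomposition of $r$ in $A$, and likewise for $y$. Applying the conditional expectation $\mathbb{E}: A \rtimes_\lambda \Gamma \to A$ to $xx^* + yy^* \leq r$ yields $\sum_s q_s + \sum_t q'_t \leq r$ in $A$, so the full family $\{q_s\} \cup \{q'_t\}$ is mutually orthogonal by the same projection-sum fact. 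Passing to $K_0(A)$ and using $[p_s]_0 = [q_s]_0$ (partial-isometry equivalence in $A$), I obtain $g = \sum_s \hat\alpha_{s^{-1}}([p_s]_0) = \sum_t \hat\alpha_{t^{-1}}([p'_t]_0)$ together with $\sum_s [p_s]_0 + \sum_t [p'_t]_0 \leq g$. The family $\{\hat\alpha_{s^{-1}}([p_s]_0)\}_s \cup \{\hat\alpha_{t^{-1}}([p'_t]_0)\}_t$ indexed by the group elements $\{s\}_s \cup \{t\}_t$ therefore sums to $2g$ and transports under $\hat\alpha$ to a total $\leq g$, exhibiting $(\Gamma,2,1)$-paradoxicality of $g$.

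For $(3) \Rightarrow (1)$, I start from $2\theta \leq \theta$ and use Lemma~\ref{klpar} to extract $u_j \in K_0(A)^+$ and $t_j \in \Gamma$ with $\sum u_j \geq 2g$ and $\sum \hat\alpha_{t_j}(u_j) \leq g$. Riesz refinement in $K_0(A)^+$ splits each $u_j = a_j + b_j + c_j$ with $\sum a_j = \sum b_j = g$, which forces $\sum \hat\alpha_{t_j}(a_j) + \sum \hat\alpha_{t_j}(b_j) \leq g$. I then lift these $K_0$-decompositions to honest orthogonal projections in $A$: cancellation gives $V(A) \cong K_0(A)^+$, so from $\sum a_j = [r]_0$ one obtains a partial isometry $v = (v_1,\ldots,v_n) \in M_{1,n}(A)$ with $v^*v = \diag(e_1,\ldots,e_n)$ and $vv^* = r$ (any $e_j$ with $[e_j]_0 = a_j$), and $P_j := v_j v_j^*$ yields pairwise orthogonal projections with $\sum P_j = r$ and $[P_j]_0 = a_j$. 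The same manoeuvre produces $r = \sum P'_j$ with $[P'_j]_0 = b_j$ and a mutually orthogonal family $\{q_j\} \cup \{q'_j\}$ in $A$ with $\sum q_j + \sum q'_j \leq r$, $[q_j]_0 = \hat\alpha_{t_j}(a_j)$, $[q'_j]_0 = \hat\alpha_{t_j}(b_j)$. Cancellation once more delivers partial isometries $w_j, z_j \in A$ with $w_j^* w_j = \alpha_{t_j}(P_j), \; w_j w_j^* = q_j$ (and similarly $z_j^* z_j = \alpha_{t_j}(P'_j), \; z_j z_j^* = q'_j$). Setting $x = \sum_j w_j u_{t_j}$ and $y = \sum_j z_j u_{t_j}$: orthogonality of the source translates $\{P_j\}$ kills the off-diagonal Fourier terms of $x^*x$, giving $x^*x = \sum_j P_j = r$; orthogonality of the ranges $\{q_j\}$ does the same for $xx^* = \sum_j q_j \leq r$; the $y$-side is parallel; and $xx^* \perp yy^*$ follows from the joint orthogonality of $\{q_j\} \cup \{q'_j\}$.

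The hardest step will be this lifting in $(3) \Rightarrow (1)$: arranging a source-side orthogonal partition of $r$ (for $\sum a_j = g$) while simultaneously securing a range-side orthogonal packing into $r$ (for $\sum \hat\alpha_{t_j}(a_j) \leq g$), and then doing the same for the $b_j$ family with the two range-packings jointly orthogonal inside $r$. Riesz refinement is what lets the competing decompositions coexist, and cancellation is what converts class-level equalities into the partial isometries that do the actual assembling.
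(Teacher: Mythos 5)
Your proposal is correct and follows essentially the same route as the paper's proof: Fourier coefficients/conditional expectation to extract the orthogonal $K_0$-decompositions for $(1)\Rightarrow(2)$, Lemma~\ref{klpar} for $(2)\Leftrightarrow(3)$, and Riesz refinement plus cancellation to assemble the partial isometries for the converse (your choice of $v$ with $vv^*=r$ even streamlines away the paper's final composition with an auxiliary partial isometry). One harmless transposition: in $x=\sum_j w_j u_{t_j}$ it is range orthogonality of the $q_j$ that diagonalizes $x^*x$ and source orthogonality of the $P_j$ that diagonalizes $xx^*$, not the other way around as written --- both families are orthogonal in your construction, so the conclusion stands.
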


\begin{proof}$(1)\Rightarrow(2)$: Write $x=\sum_{s\in F}u_{s}v_{s}$ and $y=\sum_{s\in L}u_{s}w_{s}$ where $F,L\subset\Gamma$ are finite subsets, and $v_{s},w_{s}\in A$ are partial isometries. For each $s$ in $F$ set $p_{s}:=v_{s}^*v_{s}$ and $p_{s}':=v_{s}v_{s}^*$. Similarly for every $s\in L$ set $q_{s}:=w_{s}^{*}w_{s}$ and $q_{s}':=w_{s}w_{s}^*$. If we apply the conditional expectation $\mathbb{E}:A\rtimes_{\lambda}\Gamma\rightarrow A$ to the equality $r=x^*x$ we get
\[r=\mathbb{E}(r)=\mathbb{E}\bigg(\sum_{s,t\in F}v_{s}^*u_{s}^*u_{t}v_{t}\bigg)=\sum_{s,t\in F}\mathbb{E}(v_{s}^*u_{s}^*u_{t}v_{t})=\sum_{s\in F}v_{s}^*v_{s}=\sum_{s\in F}p_{s}.\]
The second to last equality follows from the fact that for $s,t\in F$ we have
\[\mathbb{E}(v_{s}^*u_{s}^*u_{t}v_{t})=\mathbb{E}(v_{s}^*u_{s^{-1}t}v_{t}(u_{s^{-1}t})^*u_{s^{-1}t})
=\mathbb{E}(v_{s}^*\alpha_{s^{-1}t}(v_{t})u_{s^{-1}t})=\delta_{s,t}v_{s}^{*}v_{s}.\]
Therefore, the projections $p_{s}$ are mutually orthogonal subprojections of $r$ that sum to $r$. Similarly all the $q_{s}$, for $s\in L$, are mutually orthogonal subprojections of $r$ with $r=\sum_{s\in L}q_{s}$. Thus, in $K_{0}(A)^{+}$ we have
\[\sum_{s\in F}[p_{s}]_{0}+\sum_{s\in L}[q_{s}]_{0}=\bigg[\sum_{s\in F}p_{s}\bigg]_{0}+\bigg[\sum_{s\in F}q_{s}\bigg]_{0}=2[r]_{0}.\]

Now we note that for $s,t$ in $F$ with $s\neq t$ we have $v_{s}v_{t}^*=v_sv_s^*v_sv_t^*v_tv_t^*=v_sp_sp_tv_t^*=0$. Computing $xx^*$ we get
\[xx^*=\sum_{s,t\in F}u_{s}v_{s}v_t^*u_t^*=\sum_{s\in F}u_sv_sv_s^*u_s^*=\sum_{s\in F}\alpha_{s}(p_{s}').\]
Similarly $yy^*=\sum_{s\in L}\alpha_{s}(q_{s}')$. From
\[\sum_{s\in F}\alpha_{s}(p_{s}')+\sum_{s\in L}\alpha_{s}(q_{s}')=xx^*+yy^*\leq r\]
we conclude that the projections $\alpha_{s}(p_{s}'),\alpha_{s}(q_{s}')$ are mutually orthogonal subprojections of $r$ whence in $K_{0}(A)$ we have
\begin{align*}[r]_{0}\geq\bigg[\sum_{s\in F}\alpha_{s}(p_{s}')+\sum_{s\in L}\alpha_{s}(q_{s}')\bigg]_{0}&=\sum_{s\in F}[\alpha_{s}(p_{s}')]_{0}+\sum_{s\in L}[\alpha_{s}(q_{s}')]_0\\&=\sum_{s\in F}\hat\alpha_{s}([p_{s}']_{0})+\sum_{s\in L}\hat\alpha_{s}([q_{s}']_{0})=\sum_{s\in F}\hat\alpha_{s}([p_{s}]_{0})+\sum_{s\in L}\hat\alpha_{s}([q_{s}]_{0}).
\end{align*}
Therefore $g=[r]_{0}$ is (2,1)-paradoxical.

$(2)\Rightarrow(1)$: Suppose $\sum_{j=1}^{n}x_{j}\geq k[r]_0$ and $\sum_{j=1}^{n}\hat\alpha_{t_{j}}(x_{j})\leq[r]_0$ for some $k\geq2$, group elements $t_{1},\dots,t_{n}\in\Gamma$, and $x_{j}\in K_{0}(A)^{+}$. Since $k[r]_0\geq2[r]_0$ we may assume $k=2$. For some $u\in K_{0}(A)^+$ we then have $\sum_{j=1}^{n}x_{j}=[r]_0+[r]_0+u$. Refinement implies that there are subsets $\{y_{j}\}_{j=1}^{n}$, $\{z_{j}\}_{j=1}^{n}$ and $\{u_{j}\}_{j=1}^{n}$ of $K_{0}(A)^{+}$ with
\[\sum_{j=1}^{n}y_{j}=[r],\quad \sum_{j=1}^{n}z_{j}=[r],\quad \sum_{j=1}^{n}u_{j}\geq0,\quad\mbox{and}\quad x_{j}=y_{j}+z_{j}+u_{j},\quad \forall j.\]
Using the fact that $A$ has cancelation we know that there are mutually orthogonal projections $p_{j}\in \mathcal{P}(A)$ with $[p_{j}]_0=y_j$ for $j=1,\dots,n$. Similarly there are mutually orthogonal projections $q_{j}\in\mathcal{P}(A)$ with $[q_j]_0=z_j$ for $j=1,\dots,n$. Therefore,
\begin{align*}
\sum_{j}[\alpha_{t_{j}}(p_{j})]_0+\sum_{j}[\alpha_{t_{j}}(q_{j})]_0&=\sum_{j}\hat\alpha_{t_{j}}(y_j)+\sum_{j}\hat\alpha_{t_{j}}(z_j)
\\&\leq\sum_{j}\hat\alpha_{t_{j}}(y_j)+\sum_{j}\hat\alpha_{t_{j}}(z_j)+\sum_{j}\hat\alpha_{t_{j}}(u_j)=\sum_{j}\hat\alpha_{t_{j}}(x_j)\leq[r]_0.
\end{align*}
We again use the fact that $A$ has cancellation and find mutually orthogonal subprojections of $r$ $e_{1},\dots,e_{n};f_{1},\dots,f_{n}\in\mathcal{P}(A)$ with $[e_{j}]_0=[\alpha_{t_{j}}(p_{j})]_0$ and $[f_{j}]_0=[\alpha_{t_{j}}(q_{j})]_0$ for every $j$. Cancellation also implies that there are partial isometries $v_{j}$ and $w_{j}$ in $A$ with
\[v_j^*v_j=\alpha_{t_{j}}(p_{j}),\quad v_jv_j^*=e_j,\quad w_j^*w_j=\alpha_{t_{j}}(q_{j}),\quad w_jw_j^*=f_j.\]

Now set $a:=\sum_{j=1}^{n}v_{j}u_{j}$ and $b:=\sum_{j=1}^{n}w_{j}u_{j}$ where $u_{j}=u_{t_{j}}$. Note that for $i\neq j$ we compute $v_j^*v_i=v_j^*v_jv_j^*v_iv_i^*v_i=v_j^*e_je_iv_i=0$, so
\[a^*a=\sum_{i,j}u_j^*v_j^*v_iu_i=\sum_{j}u_{j}^*v_j^*v_ju_j=\sum_{j}u_{j}^*\alpha_{t_{j}}(p_{j})u_{t_{j}}
=\sum_{j}\alpha_{t_{j}^{-1}}(\alpha_{t_{j}}(p_{j}))=\sum_{j}p_{j}:=p.\]
In order to compute $aa^*$ we note that for $i\neq j$ we have
\[v_ju_ju_i^*v_i^*=v_jv_j^*v_ju_ju_i^*v_i^*v_iv_i^*=v_j\alpha_{t_{j}}(p_{j})u_ju_i^*\alpha_{t_{i}}(p_{i})v_i^*
=v_ju_jp_{j}u_j^*u_ju_i^*u_ip_{i}u_i^*v_i^*=v_ju_jp_{j}p_{i}u_i^*v_i^*=0,\]
whence
\[aa^*=\sum_{i,j}v_ju_ju_i^*v_i^*=\sum_{j}v_ju_ju_j^*v_j^*=\sum_{j}v_{j}v_{j}^*=\sum_{j}e_{j}:=e.\]
Similarly $b^*b=\sum_{j}q_{j}:=q$, and $bb^*=\sum_{j}f_{j}=f$.

Now define $x:=av$ where $v$ is the partial isometry in $A$ with $v^*v=r$ and $vv^*=p$. Such a $v$ exists because $[p]_0=\big[\sum_{j}p_j\big]_0=\sum_{j}[p_j]_0=\sum_{j}y_j=[r]_0$ and $A$ has cancellation. Similarly define $y:=bw$ where $w\in A$ satisfies $w^*w=r$ and $ww^*=q$. We compute
\[x^*x=v^*a^*av=v^*pv=v^*vv^*v=r^2=r,\]
and
\[y^*y=w^*b^*bw=w^*qw=w^*ww^*w=r^2=r.\]
Moreover, since $a$ and $b$ are partial isometries, and $e\perp f$ we have
\[xx^*yy^*=avv^*a^*bww^*b^*=avv^*a^*aa^*bb^*bww^*b^*=avv^*a^*efbww^*b^*=0.\]
Next we observe that $xx^*$ is a subprojection of $r$; indeed, since $e\leq r$,
\[rxx^*=ravv^*a^*=raa^*avv^*a^*=reavv^*a^*=eavv^*a^*=aa^*avv^*a^*=avv^*a^*=xx^*\]
Similarly $yy^*$ is a subprojection of $r$.

Finally we verify that the coefficients of $x$ and $y$ are partial isometries. Write \[x=av=\sum_{j=1}^{n}v_{j}u_{j}v=\sum_{j=1}^{n}v_{j}\alpha_{t_j}(v)u_j,\]
and compute
\[(v_{j}\alpha_{t_j}(v))^*v_{j}\alpha_{t_j}(v)=\alpha_{t_j}(v^*)v_j^*v_{j}\alpha_{t_j}(v)
=\alpha_{t_j}(v^*)\alpha_{t_j}(p_j)\alpha_{t_j}(v)=\alpha_{t_j}(v^*p_jv),\]
but since $p_j\leq p$ for every $j$, $v^*p_jv$ is a projection: $(v^*p_jv)^2=v^*p_jvv^*p_jv=v^*p_jpp_jv=v^*p_jv$. Therefore $\alpha_{t_j}(v^*p_jv)$ is a projection for each $j$ and so the coefficients of $x$, $v_{j}\alpha_{t_j}(v)$, are partial isometries. An identical argument works for the coefficients of $y$. This completes the implication $(2)\Rightarrow(1)$.

$(2)\Leftrightarrow(3)$: By definition $[g]_{\alpha}$ is infinite in $S(A,\Gamma,\alpha)$ if and only if $2[g]_{\alpha}\leq[g]_{\alpha}$, and by Proposition~\ref{klpar}, we know this occurs if and only if $g$ is $(2,1)$-paradoxical. Clearly $g$ is $(2,1)$-paradoxical if and only if $g$ is $(k,1)$-paradoxical for some $k\geq2$.
\end{proof}

At this point we can supply an alternate proof of Jolissaint and Robertson's result using ordered $K$-theory, but first, two basic lemmas. Recall that a partially ordered group $(G,G^+)$ is said to be \emph{non-atomic} if, for every non-zero $g>0$, there is an $h\in G$ with $0<h<g$. 

\begin{lem} If $A$ is a unital stably finite C*-algebra with property (SP) such that $pAp$ is infinite dimensional for every projection $p\in A$, then $(K_{0}(A),K_{0}(A)^+)$ is non-atomic.
\end{lem}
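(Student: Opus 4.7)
The plan is to produce, for an arbitrary nonzero $g \in K_0(A)^+$, an $h \in K_0(A)$ with $0 < h < g$. Since $A$ is stably finite, $g = [p]_0$ for some nonzero projection $p \in M_n(A)$. After conjugating $p$ by a scalar permutation unitary (which does not alter $[p]_0$) I may assume the $(1,1)$-entry $p_{11} \in A$ is nonzero: if $p_{11}=0$ then $(p^2)_{11} = \sum_j p_{1j}p_{1j}^* = 0$ forces the entire first row and column of $p$ to vanish, and since $p\ne0$ some diagonal entry is nonzero. Property (SP) furnishes a nonzero projection $q$ in the nonzero hereditary subalgebra $\overline{p_{11}Ap_{11}} \subset A$. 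Viewing $p_{11} = v^* p v$ for the column $v = (1_A, 0, \dots, 0)^T \in M_{n,1}(A)$, one has $q \precsim p_{11} \precsim p$ in the Cuntz semigroup, and for projections this Cuntz subequivalence yields a subprojection $q'\le p$ in some $M_k(A)$ with $q\sim q'$, whence $[q]_0 \le [p]_0 = g$ in $K_0(A)$.

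The crucial step is to split $q$ into two nonzero orthogonal subprojections inside $A$. By hypothesis the corner $qAq$ is infinite-dimensional, and as a hereditary subalgebra of $A$ it inherits property (SP). I claim $qAq$ contains a projection strictly between $0$ and $q$. Otherwise every nonzero hereditary subalgebra of $qAq$ would, by (SP), contain a nonzero projection, and hence would contain $q$ (the only remaining option). Thus for any nonzero positive $c \in qAq$ we would have $q \in \overline{c(qAq)c}$, which forces $c$ to be invertible in $qAq$ (a sequence $cx_nc \to q$ eventually satisfies $\|cx_nc-q\|<1/2$, so $cx_nc$ is invertible in $qAq$, yielding one-sided inverses for $c$). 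Then every nonzero element of $qAq$ would be invertible, and Gelfand--Mazur would give $qAq = \mathbb{C}q$, contradicting infinite-dimensionality.

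Let $q_1 \in qAq$ be such a nontrivial projection and set $q_2 := q - q_1$; both are nonzero projections in $A$. In a stably finite unital algebra a nonzero projection has nonzero (hence strictly positive) class in $K_0$: if $[q_i]_0 = 0$ then $q_i \oplus r \sim r$ in some $M_m(A)$ for some projection $r$, exhibiting $r$ as a proper subprojection of $q_i \oplus r$ equivalent to the whole, which violates finiteness of $M_m(A)$. Hence $[q_1]_0, [q_2]_0 > 0$, and from $[q_1]_0 + [q_2]_0 = [q]_0 \le g$ we extract $h := [q_1]_0$ satisfying $0 < h \le g - [q_2]_0 < g$.

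The main obstacle is the strict inequality $h<g$. Using (SP) alone to pull out a projection $q\in A$ with $q \precsim p$ only delivers $[q]_0 \le g$, and could in principle give equality. It is precisely the infinite-dimensional-corner hypothesis, routed through the Gelfand--Mazur argument in Step 2, that lets us bisect $q$ properly inside $A$ and thereby push strictly below $g$ in $K_0$.
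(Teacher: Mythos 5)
Your argument is correct, and it reaches the crucial splitting step by a genuinely different mechanism than the paper. Both proofs begin the same way: pass from $g=[p]_0$ with $p\in M_n(A)$ to a nonzero positive element of $A$ itself (you take a nonzero diagonal entry; the paper takes any nonzero $b\in A^+$ with $b\precsim p$), apply (SP) to get a nonzero projection $q\in A$ with $[q]_0\leq g$, and finish with the same stable-finiteness observation that nonzero projections have nonzero $K_0$-class, so an orthogonal splitting of $q$ gives $0<h<g$ strictly. The difference is how $q$ gets split. The paper argues constructively: since $qAq$ is infinite dimensional, every masa in it is infinite dimensional, so one can choose two orthogonal norm-one positive elements $a_1,a_2$ in such a masa and apply (SP) twice more to extract orthogonal nonzero subprojections of $q$. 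You instead run a dichotomy: if $qAq$ had no projection strictly between $0$ and $q$, then (SP) applied to the hereditary subalgebras $\overline{c(qAq)c}$ forces every nonzero positive $c\in qAq$ to be invertible in the corner, hence (via $a^*a$ and $aa^*$, a one-line step you elide but which is standard) every nonzero element is invertible, and Gelfand--Mazur gives $qAq=\mathbb{C}q$, contradicting infinite dimensionality. Your route avoids the fact that masas in infinite-dimensional \cstar-algebras are infinite dimensional, at the price of invoking Gelfand--Mazur and an indirect argument; the paper's route is more hands-on and directly exhibits the two orthogonal subprojections. Both are complete and yield the same conclusion.
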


\begin{proof}
Let $0<g=[q]_0$ belong to $K_{0}(A)^+$ for some non-zero $q\in\mathcal{P}_{n}(A)$. Then clearly there is a non-zero $b\in A^+$ with $b\precsim q$. By property (SP) there is a non-zero projection $p\in \overline{bAb}$. A little work gives $p\precsim b$. By hypothesis the corner $pAp$ is infinite dimensional and thus every masa of $pAp$ is infinite dimensional. Inside such an infinite dimensional masa we can find positive elements $a_1,a_2$ of norm one with $a_1a_2=0$. Now find non-zero projections $p_i\in \overline{a_iAa_i}$ for $i=1,2$. Then $p_1,p_2$ are non-zero orthogonal subprojections of $p$. It follows that $g>[p_1]_0>0$.
\end{proof}

\begin{lem} Let $A$ be a unital C*-algebra and $\alpha:\Gamma\rightarrow\Aut(A)$ an action. Consider the following properties:
\begin{enumerate}
\item The action $\alpha$ is $n$-filling.
\item The action $\alpha$ is $W$-$n$-minimal.
\item The action $\alpha$ is $W$-$n$-filling.
\item The action $\alpha$ is $K_0$-$n$-filling..
\end{enumerate}
Then $(1)\Rightarrow(2)\Rightarrow(3)\Rightarrow(4)$.
\end{lem}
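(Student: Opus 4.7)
My plan is to handle the three implications separately. The middle step $(2)\Rightarrow(3)$ is already Proposition~\ref{nfilling=nminimal}, so nothing new is needed there. The outer two implications are essentially exercises in transferring information between the levels $A^+$, $M_\infty(A)^+$, and $\mathcal{P}_\infty(A)$ using Cuntz-comparison bookkeeping.

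For $(1)\Rightarrow(2)$, I would represent a given nonzero $g\in W(A)$ as $g=\langle a\rangle$ with $a\in M_k(A)^+$. Since $a$ is positive and nonzero, at least one diagonal entry $a_{ii}\in A^+$ must be nonzero; a short Cuntz computation based on $e_{ii}ae_{ii}\precsim a^{1/2}e_{ii}a^{1/2}\leq a$ (where $e_{ii}\in M_k(\mathbb{C})\subset M_k(A)$ is the usual matrix unit) combined with the standard equivalence between the $(i,i)$- and $(1,1)$-placements of $a_{ii}$ gives $\langle a_{ii}\rangle\leq g$ in $W(A)$. Next I would invoke the $n$-filling hypothesis with all $n$ inputs equal to $a_{ii}/\|a_{ii}\|$ and some $\varepsilon\in(0,1)$, producing $t_1,\dots,t_n\in\Gamma$ with $\sum_j\alpha_{t_j}(a_{ii})\geq(1-\varepsilon)\|a_{ii}\|\,1_A$. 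Passing this inequality to the Cuntz semigroup (using that $c\cdot 1_A\sim 1_A$ in Cuntz sense for any $c>0$, and that $\langle b_1+\cdots+b_n\rangle\leq\sum_j\langle b_j\rangle$ in $W(A)$) then yields $\langle 1\rangle\leq\sum_j\hat\alpha_{t_j}(\langle a_{ii}\rangle)\leq\sum_j\hat\alpha_{t_j}(g)$, which is precisely the $W$-$n$-minimality conclusion.

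For $(3)\Rightarrow(4)$, I would choose projection representatives $g_j=[p_j]_0$ with each $p_j\neq 0$, form the nonzero Cuntz classes $\langle p_j\rangle\in W(A)$, and apply $W$-$n$-filling to obtain $t_1,\dots,t_n$ with $\langle 1\rangle\leq\langle\alpha_{t_1}(p_1)\oplus\cdots\oplus\alpha_{t_n}(p_n)\rangle$. Since this Cuntz inequality now relates only projections, the standard fact recalled in the Preliminaries---that Cuntz-subequivalence of projections lifts to a Murray--von Neumann subequivalence with an actual subprojection---converts it directly into $[1]_0\leq\sum_j\hat\alpha_{t_j}(g_j)$ in $K_0(A)$.

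The only mildly delicate point will be the matrix-to-scalar reduction at the start of $(1)\Rightarrow(2)$: the $n$-filling hypothesis is stated purely in terms of elements of $A^+$, whereas the target condition ranges over all of $W(A)$. Once the diagonal-corner trick produces a scalar-level positive element Cuntz-dominated by our matrix representative, the remainder reduces to routine manipulation of Cuntz classes and the Grothendieck map $V(A)\to K_0(A)$.
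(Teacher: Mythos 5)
Your proposal is correct and follows essentially the same route as the paper: the paper's proof of $(1)\Rightarrow(2)$ likewise picks a norm-one positive element $b\in A^+$ with $b\precsim x$ (your diagonal-corner argument just makes this step explicit), applies the $n$-filling hypothesis to copies of $b$, and runs the same Cuntz-class chain $\langle 1\rangle\leq\langle\sum_j\alpha_{t_j}(b)\rangle\leq\sum_j\hat\alpha_{t_j}(\langle b\rangle)\leq\sum_j\hat\alpha_{t_j}(x)$, while $(2)\Rightarrow(3)$ is cited from Proposition~\ref{nfilling=nminimal} and $(3)\Rightarrow(4)$ uses exactly the fact that $\langle p\rangle\leq\langle q\rangle$ for projections forces $[p]_0\leq[q]_0$. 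No gaps; the extra detail you supply at the matrix-to-scalar reduction is a correct filling-in of what the paper leaves implicit.
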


\begin{proof}
$(1)\Rightarrow(2)$: Let $x\in W(A)$. We can find a positive norm-one element $b\in A^+$ with $b\precsim x$. By hypothesis there are group elements $t_1,\dots,t_n$ with $\sum_{j=1}^{n}\alpha_{t_{j}}(b)\geq (1/2)1_{A}$. The result follows since
\[\langle1_A\rangle=\langle(1/2)1_{A}\rangle\leq\big\langle\sum_{j=1}^{n}\alpha_{t_{j}}(b)\big\rangle\leq\langle\oplus_{j}\alpha_{t_{j}}(b)\rangle=\sum_{j=1}^{n}\langle\alpha_{t_{j}}(b)\rangle=\sum_{j=1}^{n}\hat\alpha_{t_{j}}(\langle b\rangle)\leq\sum_{j=1}^{n}\hat\alpha_{t_{j}}(x).\]

$(2)\Leftrightarrow(3)$: This was shown in Proposition~\ref{nfilling=nminimal} above.

$(3)\Rightarrow(4)$: This follows from the fact that if $p,q\in\mathcal{P}_{\infty}(A)$ and $\langle p\rangle\leq\langle q\rangle$ in $W(A)$, then $[p]_{0}\leq[q]_{0}$ in $K_{0}(A)$.
\end{proof}

\begin{prop}\label{JRthm} Let $A$ be a C*-algebra with cancellation, property (SP), and for which  $(K_{0}(A), K_{0}(A)^+)$ is non-atomic and $K_{0}(A)^+$ has Riesz refinement (an algebra of real rank zero and stable rank one will do). Let $\alpha:\Gamma\rightarrow\Aut(A)$ be a properly outer action which is $K_{0}$-$n$-filling for some $n\in\mathbb{N}$. Then $A\rtimes_{\lambda}\Gamma$ is simple and purely infinite.
\end{prop}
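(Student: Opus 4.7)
The plan is to deduce pure infiniteness via Theorem~\ref{purelyinfinite} by verifying that every non-zero projection of $A$ is properly infinite inside $A\rtimes_\lambda\Gamma$, which by Proposition~\ref{propinf} amounts to showing that $g=[p]_0$ is $(2,1)$-paradoxical for each non-zero $p\in\mathcal{P}(A)$.

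Simplicity comes for free first. Setting $g_1=\cdots=g_n=g$ in the $K_0$-$n$-filling hypothesis yields $K_0$-minimality of the action. Since property (SP) ensures that every non-zero closed ideal of $A$ contains a non-zero projection, Theorem~\ref{min3} then upgrades $K_0$-minimality to $\Gamma$-simplicity of $A$. Combining this with proper outerness, the Olesen--Pedersen theorem quoted at the start of the excerpt delivers simplicity of $A\rtimes_\lambda\Gamma$.

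For the main paradoxicality step, fix a non-zero projection $p\in A$ and write $g=[p]_0$, noting that $g\leq[1_A]_0$ because $p\leq 1_A$. By non-atomicity of $(K_0(A),K_0(A)^+)$, applied inductively, I would split $g$ into $2n$ strictly positive pieces of $K_0(A)^+$:
\[
g = y_1+y_2+\cdots+y_{2n}, \qquad y_i>0 \ \text{for every} \ i.
\]
Applying $K_0$-$n$-filling to $(y_1,\ldots,y_n)$ and independently to $(y_{n+1},\ldots,y_{2n})$ furnishes group elements $s_1,\ldots,s_{2n}\in\Gamma$ satisfying
\[
\sum_{i=1}^{n}\hat\alpha_{s_i}(y_i)\geq [1_A]_0, \qquad \sum_{i=n+1}^{2n}\hat\alpha_{s_i}(y_i)\geq [1_A]_0.
\]
Since $g\leq[1_A]_0$, adding these inequalities gives $\sum_{i=1}^{2n}\hat\alpha_{s_i}(y_i)\geq 2g$ while $\sum_{i=1}^{2n}y_i=g$. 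Setting $x_i:=\hat\alpha_{s_i}(y_i)$ and $t_i:=s_i^{-1}$ then rearranges this into $\sum_i x_i\geq 2g$ and $\sum_i\hat\alpha_{t_i}(x_i)=g\leq g$, which is precisely the $(2,1)$-paradoxical decomposition of $g$.

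Proposition~\ref{propinf} then translates this paradoxicality into the existence of elements $x,y\in C_c(\Gamma,A)$ with the partial-isometry structure witnessing that $p$ is properly infinite in $A\rtimes_\lambda\Gamma$, and Theorem~\ref{purelyinfinite}---whose remaining hypotheses of separability, property (SP), minimality, and proper outerness are all in hand---concludes that $A\rtimes_\lambda\Gamma$ is purely infinite. The main conceptual obstacle is the transition from a $K_0$-level paradoxical decomposition to an operator-level properly infinite projection, but Proposition~\ref{propinf} has already packaged that bridge; the substance of the argument is therefore the two-fold application of $n$-filling to a non-atomic splitting of $g$, together with the bound $g\leq[1_A]_0$.
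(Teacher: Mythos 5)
Your proposal is correct and follows essentially the same route as the paper: reduce via Theorem~\ref{purelyinfinite} and Proposition~\ref{propinf} to showing $[p]_0$ is $(2,1)$-paradoxical, then use non-atomicity to split into $2n$ non-zero pieces and apply the $K_0$-$n$-filling condition twice. The only differences are that you make explicit two points the paper leaves implicit, namely the verification of minimality (via $K_0$-minimality, property (SP) and Theorem~\ref{min3}) needed to invoke Theorem~\ref{purelyinfinite}, and the passage $x_i:=\hat\alpha_{s_i}(y_i)$, $t_i:=s_i^{-1}$ that puts the inequalities into the exact form of the definition of $(\Gamma,2,1)$-paradoxicality.
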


\begin{proof}
By theorem~\ref{purelyinfinite} it suffices to prove that every projection $p$ in $A$ is properly infinite in $A\rtimes_{\lambda}\Gamma$. Now by Proposition~\ref{propinf} we need only show that $g=[p]_{0}$ in $K_{0}(A)^+$ is  $(2,1)$-paradoxical. Since $K_{0}(A)^+$ is non-atomic we may find non-zero elements $x_1,\dots,x_{2n}\in K_{0}(A)^+$ with $\sum_{j=1}^{2n}x_j\leq g$. By the n-filling property there are group elements $t_{1},\dots,t_{2n}$ with
\[\sum_{j=1}^{n}\hat\alpha_{t_{j}}(x_j)\geq [1]_{0},\quad\mbox{and}\quad \sum_{j=n+1}^{2n}\hat\alpha_{t_{j}}(x_j)\geq [1]_{0}.\]
Together $\sum_{j=1}^{2n}x_j\leq g$ and  $\sum_{j=1}^{2n}\hat\alpha_{t_{j}}(x_j)\geq2[1]_{0}\geq2g$ and thus $g$ is  $(2,1)$-paradoxical.
\end{proof}

The following result  generalizes  Theorem 5.4 of~\cite{SR} to the noncommutative case.

\begin{thm}\label{purelyinfinitecross} Let $A$ be a unital, separable, exact C*-algebra whose projections are total. Moreover, suppose $A$ has cancellation and  $K_{0}(A)^+$ has the Riesz refinement property. Let $\alpha:\Gamma\rightarrow\Aut(A)$ be a minimal and properly outer action. Consider the following properties:
\begin{enumerate}
\item The semigroup $S(A,\Gamma,\alpha)$ is purely infinite.
\item Every non-zero element in $K_{0}(A)^+$ is $(k,1)$-paradoxical for some $k\geq2$.
\item The \cstar-algebra $A\rtimes_{\lambda}\Gamma$ is purely infinite.
\item The \cstar-algebra $A\rtimes_{\lambda}\Gamma$ is traceless.
\item The semigroup $S(A,\Gamma,\alpha)$ admits no non-trivial state.
\end{enumerate}
Then the following implications always hold: $(1)\Leftrightarrow(2)\Rightarrow(3)\Rightarrow(4)\Rightarrow(5)$. If the semigroup $S(A,\Gamma,\alpha)$ is almost unperforated then $(5)\Rightarrow(1)$ and all properties are equivalent.
\end{thm}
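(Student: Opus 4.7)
The plan is to establish $(1)\Leftrightarrow(2)\Rightarrow(3)\Rightarrow(4)\Rightarrow(5)$ unconditionally and then close the loop $(5)\Rightarrow(1)$ using almost unperforation. The equivalence $(1)\Leftrightarrow(2)$ is immediate from Lemma~\ref{klpar}: pure infiniteness of $S(A,\Gamma,\alpha)$ unfolds as $2[x]_\alpha\leq [x]_\alpha$ for every non-zero $x\in K_{0}(A)^+$, i.e., $(2,1)$-paradoxicality of $x$, and $(k,1)$-paradoxicality for any $k\geq 2$ trivially implies $2[x]_\alpha\leq k[x]_\alpha\leq [x]_\alpha$. For $(2)\Rightarrow(3)$ I would apply Proposition~\ref{propinf} to each non-zero projection $r\in A$: the $(k,1)$-paradoxicality of $[r]_{0}$ supplies $x,y\in C_c(\Gamma,A)$ exhibiting $r$ as properly infinite in $A\rtimes_\lambda\Gamma$, after which Theorem~\ref{purelyinfinite} upgrades this to pure infiniteness of the reduced crossed product. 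The implication $(3)\Rightarrow(4)$ is standard, since a purely infinite \cstar-algebra admits no tracial state.

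For $(4)\Rightarrow(5)$ I argue contrapositively. Suppose $\nu$ is a non-trivial state on $S(A,\Gamma,\alpha)$; composing with $\rho:K_{0}(A)^+\rightarrow S(A,\Gamma,\alpha)$ yields a $\Gamma$-invariant, additive, order-preserving map $\beta:K_{0}(A)^+\rightarrow[0,\infty]$. Non-triviality produces some $y\in K_{0}(A)^+$ with $0<\beta(y)<\infty$; minimality together with Theorem~\ref{min3} then supplies $t_1,\dots,t_n\in\Gamma$ with $\sum_j\hat\alpha_{t_j}(y)\geq [1_A]_{0}$, so $\beta([1_A]_{0})\leq n\beta(y)<\infty$. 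Because $[1_A]_{0}$ is an order unit, $\beta$ is finite on all of $K_{0}(A)^+$; normalizing yields a $\Gamma$-invariant state on $(K_{0}(A),K_{0}(A)^+,[1_A]_{0})$. Exactness of $A$ combined with projections being total (Blackadar--Handelman together with Haagerup's quasitrace theorem) produces a $\Gamma$-invariant tracial state $\tau$ on $A$ realising this $K_0$-state, and $\tau\circ\mathbb{E}$ is then a tracial state on $A\rtimes_\lambda\Gamma$, contradicting $(4)$.

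The core of the argument is $(5)\Rightarrow(1)$ under almost unperforation, again by contrapositive. Assume some $[x]_\alpha$ fails $2[x]_\alpha\leq [x]_\alpha$; the goal is to show $[x]_\alpha$ is completely non-paradoxical, so that Tarski's Theorem~\ref{Tarski} delivers a non-trivial state with $\nu([x]_\alpha)=1$. If instead $(n+1)[x]_\alpha\leq n[x]_\alpha$ for some $n\geq 1$, induction gives $(n+1)^k[x]_\alpha\leq n^k[x]_\alpha$ for every $k\geq 1$; choosing $k$ so that $(n+1)^k\geq 2n^k$ yields $2n^k[x]_\alpha\leq n^k[x]_\alpha$, and doubling this inequality once more produces $4n^k[x]_\alpha\leq n^k[x]_\alpha$. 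Almost unperforation applied with $\theta'=2[x]_\alpha$, $\eta'=[x]_\alpha$, $n'=2n^k$ and $m'=n^k$ (so $n'>m'$) now forces $2[x]_\alpha\leq [x]_\alpha$, contradicting our assumption.

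The main obstacle is precisely this bootstrap in $(5)\Rightarrow(1)$: converting a single paradoxical inequality $(n+1)[x]_\alpha\leq n[x]_\alpha$ into honest proper infiniteness $2[x]_\alpha\leq [x]_\alpha$, despite the absence of cancellation in the algebraic ordering on $S(A,\Gamma,\alpha)$. The exponents must be engineered so that almost unperforation is eventually invoked on the pair $(2[x]_\alpha,[x]_\alpha)$ rather than on $[x]_\alpha$ against itself, where it would yield nothing. A secondary subtlety is the finiteness of $\beta$ at $[1_A]_{0}$ in $(4)\Rightarrow(5)$, which is where the $K_{0}$-minimality established in Section~3.1 enters crucially.
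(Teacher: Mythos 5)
Your proposal is correct and follows essentially the same route as the paper: $(1)\Leftrightarrow(2)$ via Lemma~\ref{klpar}, $(2)\Rightarrow(3)$ via Proposition~\ref{propinf} and Theorem~\ref{purelyinfinite}, $(4)\Rightarrow(5)$ by turning a non-trivial state on $S(A,\Gamma,\alpha)$ into a $\Gamma$-invariant $K_0$-state (finite by minimality) and then into a trace using exactness and totality of projections, and $(5)\Rightarrow(1)$ by Tarski plus almost unperforation applied to the pair $(2\theta,\theta)$. The only cosmetic difference is in the last bootstrap: the paper simply adds $\theta$ repeatedly to $(k+1)\theta\leq k\theta$ to reach $(k+1)(2\theta)\leq k\theta$, whereas you iterate to powers $(n+1)^k\theta\leq n^k\theta$; both land on an inequality of the form $n'(2\theta)\leq m'\theta$ with $n'>m'$, so the conclusion is the same.
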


\begin{proof}$(1)\Leftrightarrow(2)$: We have already seen that $x\in K_{0}(A)^+$ is $(k,1)$-paradoxical for some $k\geq2$ if and only if $\theta=[x]_{\alpha}$ is infinite in $S(A,\Gamma,\alpha)$.

$(2)\Rightarrow(3)$: Let $r$ be a non-zero projection in $A$. By assumption $[r]_0$ is $(2,1)$-paradoxical, so by lemma~\ref{propinf} $r$ is properly infinite in $A\rtimes_{\lambda}\Gamma$. Then $A\rtimes_{\lambda}\Gamma$ is purely infinite by Theorem~\ref{purelyinfinite}.

$(3)\Rightarrow(4)$: Purely infinite \cstar-algebras are always traceless.

$(4)\Rightarrow(5)$: Suppose $\nu:S(A,\Gamma,\alpha)\rightarrow[0,\infty]$ is a non-trivial state. Suppose $0<\nu([x]_\alpha)<\infty$ where $x\in K_0(A)^+$ is non-zero. Composing with the quotient  map $\rho:K_0(A)^+\rightarrow S(A,\Gamma,\alpha)$ we get an order preserving monoid homomorphism $\beta'=\nu\circ\rho:K_{0}(A)^+\rightarrow[0,\infty]$ with $0<\beta'(x)<\infty$. As in the proof of Proposition~\ref{CNP iff state}, minimality of the action ensures that $\beta'$ is finite on all of $K_{0}(A)^+$. Extending $\beta'$ to $K_{0}(A)$ gives a $\Gamma$-invariant positive group homomorphism, $\beta$, on $K_{0}(A)$. Since $A$ is exact and projections are total, $\beta$ comes from a $\Gamma$-invariant trace on $A$, so that $A\rtimes_{\lambda}\Gamma$ admits a trace, a contradiction.

Now we assume that $S(A,\Gamma,\alpha)$ is weakly unperforated and prove $(5)\Rightarrow(1)$. Let $\theta=[x]_\alpha$ be a non-zero element in $S(A,\Gamma,\alpha)$. If $\theta$ is completely non-paradoxical then by Tarski's Theorem $S(A,\Gamma,\alpha)$ admits a non-trivial state. So, assuming $(5)$, we must have $(k+1)\theta\leq k\theta$ for some $k\in\mathbb{N}$. So
\[(k+2)\theta=(k+1)\theta+\theta\leq k\theta+\theta=(k+1)\theta\leq k\theta.\]
Repeating this trick we get $(k+1)2\theta\leq k\theta$. Since $S(A,\Gamma,\alpha)$ is weakly unperforated we conclude $2\theta\leq\theta$ and $\theta$ is properly infinite.

\end{proof}

Combining Theorems~\ref{stablyfinitecross} and~\ref{purelyinfinitecross} we obtain a dichotomy.

\begin{thm}\label{Di}Let $A$ be a unital, separable, exact C*-algebra whose projections are total. Moreover suppose $A$ has cancellation and $K_{0}(A)^+$ has the Riesz refinement property. Let $\Gamma$ be a countable discrete group and let $\alpha:\Gamma\rightarrow\Aut(A)$ be a minimal and properly outer action such that $S(A,\Gamma,\alpha)$ is weakly unperforated. Then the reduced crossed product $A\rtimes_{\lambda}\Gamma$ is a simple C*-algebra which is either stably finite or purely infinite. Moreover, if $A$ is AF and $\Gamma=\mathbb{F}_{r}$, then $A\rtimes_{\lambda}\Gamma$ is MF or purely infinite.
\end{thm}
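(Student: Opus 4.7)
The plan is to combine Theorems~\ref{stablyfinitecross} and~\ref{purelyinfinitecross}, using almost unperforation to force a clean dichotomy inside the type semigroup $S=S(A,\Gamma,\alpha)$. Simplicity of $A\rtimes_{\lambda}\Gamma$ is immediate: the action is assumed minimal and properly outer, so the Olesen--Pedersen theorem cited in the Preliminaries applies. It remains to show that $A\rtimes_{\lambda}\Gamma$ is either stably finite or purely infinite, and to upgrade stable finiteness to MF in the AF / free-group case.

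The dichotomy will be proved in two steps inside $S$. \emph{Step 1.} Every $\theta\in S$ is either completely non-paradoxical or infinite (i.e.\ $2\theta\leq\theta$). Mimicking the $(5)\Rightarrow(1)$ argument in the proof of Theorem~\ref{purelyinfinitecross}, suppose $(k+1)\theta\leq k\theta$ for some $k\in\mathbb{N}$; adding $\theta$ repeatedly gives $(k+1)(2\theta)\leq k\theta$, and almost unperforation (applied with $\eta=\theta$, $n=k+1>m=k$) yields $2\theta\leq\theta$. \emph{Step 2.} Infiniteness propagates under minimality: if some $\theta_0=[x_0]_\alpha$ is infinite in $S$, then every $[y]_\alpha$ is infinite. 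By Theorem~\ref{min3}, $K_0$-minimality together with the fact that $[1]_0$ is an order unit produces integers $N,M$ with $[y]_\alpha\leq N\theta_0$ and $\theta_0\leq M[y]_\alpha$ in $S$. Since $\theta_0$ is infinite, $j\theta_0\leq\theta_0$ for every $j\geq 1$; in particular $(M+1)\theta_0\leq\theta_0\leq M[y]_\alpha$, so almost unperforation gives $\theta_0\leq[y]_\alpha$. Hence $2[y]_\alpha\leq 2N\theta_0\leq\theta_0\leq[y]_\alpha$, making $[y]_\alpha$ infinite as well.

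Combining the two steps, exactly one of the following holds: either every $\theta\in S$ is infinite, so $S$ is purely infinite and Theorem~\ref{purelyinfinitecross} yields that $A\rtimes_{\lambda}\Gamma$ is purely infinite; or no element of $S$ is infinite, in which case Step~1 makes every $\theta$ completely non-paradoxical. In the second case Lemma~\ref{klpar} translates this into complete non-paradoxicality of $\alpha$ on $K_0(A)^+$, and Theorem~\ref{stablyfinitecross} gives stable finiteness of $A\rtimes_{\lambda}\Gamma$. The moreover clause follows directly from the last assertion of Theorem~\ref{stablyfinitecross}, which upgrades stable finiteness to MF under the extra hypothesis that $A$ is AF and $\Gamma=\mathbb{F}_r$.

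The main obstacle is Step~2: one needs both a uniform domination of arbitrary elements by $\theta_0$ (from minimality) and a reverse comparison $\theta_0\leq[y]_\alpha$ to transfer infiniteness, and the latter is delicate. The trick is that almost unperforation converts the one-sided multiplicative inequality $(M+1)\theta_0\leq M[y]_\alpha$, which is a free byproduct of $\theta_0$ being infinite, directly into the desired comparison.
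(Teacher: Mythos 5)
Your proposal is correct, but it organizes the dichotomy differently from the paper, which gives no written argument beyond ``combining Theorems~\ref{stablyfinitecross} and~\ref{purelyinfinitecross}''. The natural combination intended there runs through traces: if $A\rtimes_{\lambda}\Gamma$ is not purely infinite then, by the equivalences of Theorem~\ref{purelyinfinitecross} (where almost unperforation and, ultimately, Tarski's theorem enter), it is not traceless; simplicity from Olesen--Pedersen makes any tracial state faithful, and Theorem~\ref{stablyfinitecross} then yields stable finiteness (and MF in the AF/free-group case). You instead prove the dichotomy entirely inside $S(A,\Gamma,\alpha)$: your Step~1 is exactly the paper's $(5)\Rightarrow(1)$ trick localized to a single class, while Step~2 --- propagating proper infiniteness from one nonzero class to every class using $K_0$-minimality (via Theorem~\ref{min3}) together with almost unperforation --- is an argument that does not appear in the paper. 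This buys you a proof that only uses the one-directional implications $(1)\Rightarrow(3)$ of Theorem~\ref{purelyinfinitecross} and $(4)\Rightarrow(3)$ (valid under exactness and totality of projections) of Theorem~\ref{stablyfinitecross}, bypassing the trace-theoretic bridge, at the cost of an extra semigroup lemma. Two minor caveats: the case split must be phrased over \emph{nonzero} classes, since $2\cdot0\leq0$ makes $0$ trivially properly infinite and Step~2 requires $x_0\neq0$ (and $y\neq0$) for $K_0$-minimality to apply --- harmless, because $[x]_{\alpha}=0$ forces $x=0$; and your appeal to Theorem~\ref{min3} to pass from minimality to $K_0$-minimality presupposes that every ideal of $A$ contains a nontrivial projection, which is the same implicit step the paper itself takes in Proposition~\ref{CNP iff state}, so it introduces no new gap.
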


We end our discussion with a few interesting questions. It is unknown to the author if there are examples of minimal and properly outer actions on \cstar-algebras satisfying the conditions in Theorem~\ref{Di} for which the type semigroup is \emph{not} almost unperforated. In particular, is there a free and action of the free group $\mathbb{F}_{2}$ on the Cantor set $X$ for which $S(X,\mathbb{F}_{2},\mathcal{C})$ is not almost unperforated? Although Ara and Exel construct actions of a finitely generated free group on the Cantor set for which the type semigroup is not almost unperforated,  these actions are not minimal~\cite{ArEx}. Moreover, almost unperforation may be too strong a condition to establish $(5)\Rightarrow(1)$ in Theorem~\ref{purelyinfinitecross}. What is required is that every `infinite element' (in the sense that $(k+1)x\leq kx$ for some $k$) is properly infinite. This is a priori a weaker condition than almost unperforation.


\begin{thebibliography}{99}

\bibitem{ArEx} P. Ara and R. Exel, Dynamical Systems Associated to Separated Graphs, Graph Algebras, and Paradoxical Decompositions, \emph{Advances in Mathematics}, \textbf{252} (2014), 748-804.

\bibitem{APT} P. Ara, F. Perera, and A. S. Toms, K-Theory for Operator Algebras. Classification of \cstar-algebras, \emph{preprint} (2009)

\bibitem{AD} C. Anantharaman-Delaroche, \cstar-alg\`{e}bres de Cuntz-Krieger et groupes Fuchsiens, \emph{Operator Theory, Operator Algebras and Related Topics} (Timisoara 1996), The Theta Foundation, Bucharest, (1997), 17-35.

\bibitem{AD2} C. Anantharaman-Delaroche, Purely Infinite \cstar-algebras Arising from Dynamical Systems, \emph{Bull. Soc. Math.
France} \textbf{125} (1997), 199-225.

\bibitem{BCH} M. Bekka, M. Cowling, and P. de la Harpe, Some groups whose reduced
\cstar-algebra is simple, \emph{Inst. Hautes \'{E}tudes Sci. Publ. Math.} \textbf{80} (1994), 117-134.

\bibitem{Bl} B. Blackadar, \emph{K-Theory for Operator Algebras}, second edition. Mathematical Sciences Research Institute Publications, 5. Berkeley, CA, 1998.

\bibitem{Bl2} B. Blackadar, The Algebraization of Dynamics: Amenability, Nuclearity, Quasidiagonality, and Approximate Finite Dimensionality, \emph{Operator Algebras, Quantization, and Noncommutative Geometry}, pp. 51-83. Contemporary Mathematics, vol. 365. American Mathematical Society, Providence, RI, 2004.

\bibitem{BK} B. Blackadar and E. Kirchberg, Generalized Inductive Limits of Finite-Dimensional C*-Algebras, \emph{Math. Ann.} \textbf{307} (1997), 343-380.

\bibitem{B} N. P. Brown, AF Embeddability of Crossed Products of AF Algebras by the Integers, \emph{J. Funct. Anal.} \textbf{160} (1998), 150-175.


\bibitem{BO} N. P. Brown and N. Ozawa. \emph{\cstar-Algebras and Finite-Dimensional Approximations}. Graduate Studies in
Mathematics, 88. Amer. Math. Soc., Providence, RI, 2008.




\bibitem{Da} K. R. Davidson, \emph{C*-Algebras by Example}. Fields Institute Monographs, 6. Amer. Math. Soc., Providence, RI, 1996.


\bibitem{EllNiu} G. A. Elliott and Zhuang Niu, The \cstar-algebra of a Minimal Homeomorphism of Zero Mean Dimension, \emph{preprint} (2014).

\bibitem{EllToms} G. A. Elliott and A. S. Toms. Regularity Properties in the Classification Program for Separable Amenable \cstar-algebras, \emph{Bull. Amer. Math. Soc.}\textbf {45} no. 2 (2008), 229-245.

\bibitem{GK} J. Giol and D. Kerr. Subshifts and Perforation, \emph{J. Reine Angew. Math.} \textbf{639} (2010), 107-119.





\bibitem{HS} P. de la Harpe and G. Skandalis, Powers' Property and simple \cstar-Algebras, \emph{Math. Ann.} \textbf{273} (1986), no. 2, 241-250.

\bibitem{Ha} P. de la Harpe, Reduced \cstar-Algebras of Discrete Groups which are Simple with a Unique Trace, \emph{Lect. Notes Math.} \textbf{1132} Berlin, Heidelberg, New York: Springer (1985), 230-253.

\bibitem{JR} P. Jolissaint and G. Robertson, Simple purely infinite \cstar-algebras and $n$-filling actions, \emph{J. Funct. Anal.} \textbf{175} (2000), 197-213.

\bibitem{Kerr} \cstar-algebras and Topological Dynamics: Finite Approximation and Paradoxicality, \emph{preprint} (2011).

\bibitem{KN} D. Kerr and P. W. Nowak, Residually finite actions and crossed products, \emph{Ergodic Theory Dynam. Systems} \textbf{32} (2012), 1585-1614.

\bibitem{Kir} E. Kirchberg, The Classification of Purely Infinite \cstar-algebras using Kasparaov's Theory, to appear in the Fields Institute Communication Series.

\bibitem{KR2} E. Kirchberg and M. R{\o}rdam, Non-simple Purely Infinite \cstar-algebras, \emph{Amer. J. Math} \textbf{122} (2000), no.3, 637-666.

\bibitem{KiKu} A. Kishimoto and A. Kumjian, Crossed Products of Cuntz Algebras by Quasi-free Au-
tomorphisms, \emph{Operator algebras and their applications} (Waterloo, ON, 1994/1995),
Fields Inst. Commun., vol. 13, Amer. Math. Soc., Providence, RI, 1997, pp. 173-192.

\bibitem{LS} M. Laca and J. Spielberg, Purely infinite \cstar-algebras from boundary actions of
discrete groups, \emph{J. Reine Angew. Math.} \textbf{480} (1996), 125-139.

\bibitem{MR} H. Matui and M. R{\o}rdam, Universal Properties of Group Actions on Locally Compact Spaces, \emph{preprint}, 2014.


\bibitem{OP} D. Olesen and G. K. Pedersen, Applications of the Connes Spectrum to \cstar-dynamical
Systems, III, \emph{J. Funct. Anal.} \textbf{45} (1981), no. 3, 357-390.

\bibitem{Ph} N. C. Phillips, Crossed Product C*-Algebras and Minimal Dynamics, \emph{preprint}, 2013.

\bibitem{Ph2} N. C. Phillips, A Classification Theorem for Nuclear Purely Infinite Simple \cstar-algebras, \emph{Documenta Math.} (2000), no. 5, 49-114.





\bibitem{Pu} I.F. Putnam, On the topological stable rank of certain transformation group \cstar-algebras. \emph{Ergodic Theory Dynam. Systems} \textbf{10} (1990), 197-207.

\bibitem{Ra} T. Rainone, MF Actions and K-Theoretic Dynamics, \emph{J. Funct. Anal.} \textbf{267} (2014), 542-578.


\bibitem{R} M. R{\o}rdam, On the Structure of Simple \cstar-algebras Tensored with a UHF-Algebra,
II, \emph{J. Funct. Anal.} \textbf{107} (1992), 255-269.

\bibitem{R1} M. R{\o}rdam, A simple \cstar-algebra with a Finite and Infinite Projection, \emph{Acta Math.}\textbf{191} no. 01 (2003), 109-142.

\bibitem{Ro} M. R{\o}rdam and E. St{\o}rmer. \emph{Classification of Nuclear \cstar-algebras. Entropy in Operator Algebras}. Encyclopaedia of Mathematical Sciences, 126. Operator Algebras and Non-commutative Geometry, VII. Springer-Verlag, Berlin, 2002.

\bibitem{SR} M. R{\o}rdam and A. Sierakowski, Purely Infinite \cstar-algebras Arising from Crossed Products, \emph{Ergod. Theory \& Dynam. Sys.} \textbf{32} (2012), 273-293. 

\bibitem{Sp2} J.S. Spielberg, Free-Product Groups, Cuntz-Krieger Algebras and Covariant Maps, \emph{Int. J. Math.} \textbf{02} no. 04 (1991), 457-476.




\bibitem{TomsWin1} A. S. Toms and W. Winter, Minimal Dynamics and the Classification of \cstar-algebras, \emph{Proc. Natl. Acad. Sci. USA}, \textbf{106} (40), (2009), 16942-16943.

\bibitem{TomsWin2} A. S. Toms and W. Winter, Minimal Dynamics and K-Theoretic-Rigidity: Elliott's Conjecture, \emph{Geom. and Funct. Anal.} \textbf{23} (1) (2013), 467-481.

\bibitem{Wa} S. Wagon. \emph{The Banach-Tarski Paradox}. Cambridge University Press, Cambridge, 1993.

\bibitem{Wi} D. P. Williams. Crossed Products of \cstar-Algebras. Mathematical Surveys and Monographs, 134. Amer. Math.
Soc., Providence, RI, 2007.

\bibitem{Zh1} S. Zhang, A Property of Purely Infinite Simple \cstar-algebras, \emph{Proc. Amer. Math. Soc.} \textbf{109} (1990), 717-720.

\bibitem{Zh2} S. Zhang, A Riesz Decomposition Property and Ideal Structure of Multiplier Algebras, \emph{J. Operator Theory} \textbf{24} (1990), 209-225.

\end{thebibliography}
\end{document}